\documentclass{amsart}
\usepackage{amsmath}
\usepackage{amssymb}
\usepackage{amsthm}
\usepackage{enumerate}
\theoremstyle{definition}
\newtheorem{definition}{Definition}[section]
\theoremstyle{plain}
\newtheorem{lemma}[definition]{Lemma}
\newtheorem{theorem}[definition]{Theorem}
\newtheorem{proposition}[definition]{Proposition}
\newtheorem{corollary}[definition]{Corollary}
\theoremstyle{remark}

\newtheorem{example}[definition]{Example}

\makeatletter
\@namedef{subjclassname@2020}{
  \textup{2020} Mathematics Subject Classification}
\makeatother

\newcommand{\mycl}{\operatorname{cl}}
\newcommand{\myrk}{\operatorname{rk}}
\newcommand{\myIso}{\operatorname{Iso}}
\newcommand{\myLpt}{\operatorname{lpt}}
\newcommand{\myDef}{\operatorname{Def}}
\newcommand{\mydim}{\operatorname{Dim}}
\newcommand{\mydims}{\dim_{\text{w}}}
\newcommand{\mySet}{\mathbf{DIM}}
\newcommand{\myNum}{\mathfrak n_{\dim}}
\newcommand{\mySelf}{\mathfrak D}
\newcommand{\myfineq}{\sim_{\text{fin}}}
\newcommand{\topdim}{\dim_{\text{top}}}

\begin{document}
\title[Dimension functions in ordered structures]{Cardinality of the sets of dimension functions in ordered structures}
\author[M. Fujita]{Masato Fujita}
\address{Department of Liberal Arts,
Japan Coast Guard Academy,
5-1 Wakaba-cho, Kure, Hiroshima 737-8512, Japan}
\email{fujita.masato.p34@kyoto-u.jp}
\thanks{The author was supported by JSPS KAKENHI Grant Number JP25K07109.}

\begin{abstract}
	We compute the cardinality $\myNum(\mathcal M)$ of the sets of dimension functions on the ordered structures $\mathcal M$.
	The inequality $\myNum(\mathcal M) \leq 1$ holds if $\mathcal M$ is a d-minimal expansion of an ordered group.
	If $\mathcal M$ is o-minimal and $\myNum(\mathcal M)<\infty$, there exists a positive integer $m$ such that $\myNum(\mathcal M)=2^m-1$.
	For every positive integer $m$, there exists a weakly o-minimal expansion $\mathcal M$ of an ordered divisible Abelian group such that $\myNum(\mathcal M)=m$.
\end{abstract}

\subjclass[2020]{Primary 03C64; Secondary 05C45, 54F45}

\keywords{o-minimality; d-minimality; weak o-minimality; dimension}

\maketitle

\section{Introduction}\label{sec:intro}
In this paper, we compute the cardinality of the sets of dimension functions on the ordered structures $\mathcal M$.
First we clarify the definition of dimension functions.

Let $\mathcal M=(M,\ldots)$ be a structure.
For $d<n$, let $\Pi_{d}^n:M^n \to M^{n-d}$ be the coordinate projection of $M^n$ forgetting the last $d$ coordinates.
We abbreviate $\Pi_1^n$ by $\Pi$ when $n$ is clear from the context.
For every $x \in \Pi_d^n(X)$, the fiber $\{y \in M^d\;|\;(x,y) \in X\}$ is denoted by $X_x^{\Pi^n_d}$.
We write $X_x^{\Pi}$ instead of $X_x^{\Pi_1^n}$.
The following is the definition of dimension function employed in this paper.
\begin{definition}[{\cite{vdD2}}]\label{def:dimension}
	Let $\mathcal M=(M,\ldots)$ be a structure.
	Let $\myDef_n(\mathcal M)$ be the collection of all subsets of $M^n$ definable in $\mathcal M$.
	Put $\myDef(\mathcal M):=\bigcup_{n>0}\myDef_n(\mathcal M)$.
	A function $\dim: \myDef(\mathcal M) \to \mathbb N \cup \{-\infty\}$, where $\mathbb N$ is the set of nonnegative integers, is called a \textit{dimension function on $\mathcal M$} if the following conditions are satisfied for every $n >0$:
	\begin{enumerate}
		\item[(1)] $\dim(\emptyset)=-\infty$, $\dim(\{a\})=0$ for each $a \in M$, $\dim(M^1)=1$.
		\item[$(2)_n$] Let $X$ and $Y$ be in $\myDef_n(\mathcal M)$.
		The equality
		\begin{align*}
			\dim(X \cup Y)=\max\{\dim(X),\dim(Y)\}
		\end{align*}
		holds.
		In particular, we have $\dim X \leq \dim Y$ if $X \subseteq Y$.
		\item[$(3)_n$] $\dim (X)=\dim(X^\sigma)$ for each $X \in \myDef_n(\mathcal M)$ and each permutation $\sigma$ of $\{1,\ldots, n\}$, where $$X^\sigma:=\{(x_{\sigma(1)},\ldots,x_{\sigma(n)}) \in M^n\;|\; (x_1,\ldots,x_n) \in X\}.$$
		\item[$(4)_n$] Suppose $n>1$. Let $X \in \myDef_n(\mathcal M)$. Put $$X(i):=\{x \in \Pi(X)\;|\; \dim X_x^{\Pi} = i\}$$ for $i=0,1$.
		Then, $X(i) \in \myDef_{n-1}(\mathcal M)$ and 
		\begin{equation}
			\dim(X \cap \Pi^{-1}(X(i)))=\dim(X(i))+i  \tag{*} \label{eq:4n}
		\end{equation}
	\end{enumerate}
	
	For the technical reason, we introduce additional notations.
	$M^0$ represent a singleton and we put $\dim M^0=0$.
	For every $n >0$, a (coordinate) projection of $M^n$ onto $M^0$ means the trivial map. 
	It is obvious that $\dim X \leq n$ for every definable subset $X$ of $M^n$.
	
\end{definition}
Condition $(4)_n$ in Definition \ref{def:dimension} is called the addition property.
Definition \ref{def:dimension} is the list of properties which every `good dimension theory' should enjoy.
The set of dimension functions on $\mathcal M$ is denoted by $\mySet(\mathcal M)$ and its cardinality is denoted by $\myNum(\mathcal M)$.

In this paper, we mainly discuss about dimension functions under the assumption that the given structure is an expansion of a dense linear order without endpoints (DLO).
Before we move on to our main topic, we give brief comments on dimension functions on structures not defining orders. 
For instance, the Morley rank is a unique dimension function on strongly minimal structures (Proposition \ref{prop:strongly}).
In \cite{vdD2}, van den Dries gave another example of dimension function related to Henselian fields.

From now on, let us consider structures defining orders.
An order induces the order topology over the universe of the structure.
A standard candidate of dimension function is the topological dimension given below.
\begin{example}\label{ex:top_dim}
	Let $\mathcal M=(M,<,\ldots)$ be an expansion of DLO. 
	$M$ is equipped with the order topology induced from the order relation $<$, and $M^n$ is equipped with the product topology.
	$M^0$ is defined as the singleton with the trivial topology.
	For each definable subset $X$ of $M^n$, we define $\topdim X$ as follows:
	If $X=\emptyset$, $\topdim X=-\infty$.
	If $X$ is not empty, $\topdim X$ is the maximum of nonnegative integer $d$ such that there exists a coordinate projection $\pi:M^n \to M^d$ satisfying that $\pi(X)$ has a nonempty interior.
\end{example}
In fact, the topological dimension $\topdim$ is a dimension function in the sense of Definition \ref{def:dimension} in several important cases such as the case where $\mathcal M$ is o-minimal \cite{vdD, Mathews}, definably complete locally o-minimal \cite{Fuji3,FKK} and Baire constructible structures \cite[Corollary 3.16]{Fuji4} including d-minimal expansions of an ordered field.

Questions tackled in this paper are whether other dimension functions exist and, if that's the case, how many dimension functions exist.
We compute $\myNum(\mathcal M)$ for several ordered structures $\mathcal M$ as follows: 
\begin{enumerate}
	\item[(a)] $\myNum((\mathbb N,<))=\myNum((\mathbb Z,<))=0$ (Corollary \ref{cor:non});
	\item[(b)] $\myNum(\mathcal M) \leq 1$ if $\mathcal M$ is either a d-minimal expansion of an ordered group or a weakly o-minimal expansion of an ordered field (Theorem \ref{thm:unique});
	\item[(c)] $\myNum(\mathcal M)=|M|$ if $\mathcal M=(M,<)$ is a model of DLO (Proposition \ref{prop:DLO}), where $|M|$ denotes the cardinality of the set $M$;
	\item[(d)] For every o-minimal structure $\mathcal M$, if $\myNum(\mathcal M)<\infty$, there exists a positive integer $m$ such that $\myNum(\mathcal M)=2^m-1$ (Theorem \ref{thm:omin_finite}).	
	For every integer $m>0$, there exists an o-minimal theory $T$ such that $\myNum(\mathcal M)=2^{m+1}-1$ for every model $\mathcal M$ of $T$ (Proposition \ref{prop:omin_finite_example}).
	\item[(e)] For every integer $m>1$, there exists a weakly o-minimal theory $T$ extending the theory of ordered divisible Abelian groups such that $\myNum(\mathcal M)=m$ for every model $\mathcal M$ of $T$ (Theorem \ref{thm:weakly}).  
\end{enumerate}

This paper is organized as follows:
In Section \ref{sec:prelim}, we prove that functions satisfying some weaker conditions than those in Definition \ref{def:dimension} is a dimension function.
This result enables us to prove that given functions are dimension functions with less efforts. 
In Section \ref{sec:uniq}, we prove facts (a-e) introduced above.

Throughout, `definable' means `definable in the structure with parameters.'
Consider a linearly ordered set without endpoints $(M,<)$.
An open interval is a nonempty set of the form $\{x \in M\;|\; a < x < b\}$ for some $a,b \in M \cup \{\pm \infty\}$.
It is denoted by $(a,b)$ in this paper.
We use the same notation for the pair of elements.
Readers will not be confused due to this abuse of notations.
An open box is the Cartesian product of open intervals.
The closed interval is defined similarly and denoted by $[a,b]$.
We define $[a,b)$ similarly.
When an expansion $\mathcal M=(M,<,\ldots)$ of a dense linear order without endpoints is given, the set $M$ is equipped with the order topology induced from the order $<$. 
The Cartesian product $M^n$ is equipped with the product topology of the order topology.

\section{Weak dimension function is a dimension function}\label{sec:prelim}

First we recall several basic definitions for the later use.
Let $\mathcal M=(M,<)$ be an expansion of DLO.
$\mathcal M$ is \textit{o-minimal} if every definable subset of $M$ is a union of a finite set and finitely many open intervals \cite{vdD}.
$\mathcal M$ is \textit{definably complete} if, for every definable subset $X$ of $M$, the supremum and infimum of $X$ exist in $M \cup \{\pm \infty\}$ \cite{M}.
$\mathcal M$ is \textit{weakly o-minimal} if every definable subset of $M$ is a union of finitely many convex sets \cite{MMS}.
$\mathcal M$ is \textit{locally o-minimal} if, for every $a \in M$ and every definable subset $X$ of $M$, there exists an open interval such that $X \cap I$ is a union of a finite set and finitely many open intervals \cite{TV}.
Finally, $\mathcal M$ is \textit{d-minimal} if it is definably complete and, for every structure $\mathcal N=(N,<,\ldots)$ elementary equivalent to $\mathcal M$, every definable subset $X$ of $N$ is a union of an open set and finitely many discrete sets \cite{Fornasiero,M2}.
The following diagram illustrates the implications among these concepts:
\begin{center}
\begin{tabular}{ccccccc}
o-minimality & $\Rightarrow$  & definably complete & $\Rightarrow$ & d-minimality & $\Rightarrow$ & definable \\
 &  & local o-minimality & & & &  completeness\\
$\Downarrow$ & & $\Downarrow$ &&&&\\
weak & $\Rightarrow$ & local &&&&\\
o-minimality & & o-minimality &&&&
\end{tabular}
\end{center}
These implications are strict, that is, the converse does not hold.

Next we introduce several facts used many times in this paper.
\begin{proposition}[{\cite[1.3, Corollary 1.5]{vdD2}}]\label{prop:dim}
	Let $\mathcal M=(M,\ldots)$ be a structure and $\dim: \myDef(\mathcal M) \to \mathbb N \cup \{-\infty\}$ be a dimension function on $\mathcal M$.
	Let $X \in \myDef_m(\mathcal M)$ and $f:X \to M^n$ be a map whose graph belongs to $\myDef_{m+n}(\mathcal M)$.
	Then, we have:
	\begin{enumerate}
		\item[(1)] The function $\dim$ is completely determined by its restriction to $\myDef_1(\mathcal M)$;
		\item[(2)] $\dim f(X) \leq \dim X$;
		In particular, $\dim X=\dim f(X)$ if $f$ is injective;
		\item[(3)] For every $0 \leq i \leq m$, $B(i):=\{y \in M^n\;|\; \dim f^{-1}(y)=i\}$ belongs to $\myDef_m(\mathcal M)$ and $\dim(f^{-1}(B(i)))=\dim B(i)+i$.
	\end{enumerate}
\end{proposition}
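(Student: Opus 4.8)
The engine of the whole proposition is the \emph{one-step recursion}
$\dim X = \max\{\dim X(0),\ \dim X(1)+1\}$, valid for every $X \in \myDef_n(\mathcal M)$ with $n>1$: here $X(0),X(1)\in\myDef_{n-1}(\mathcal M)$ are the sets produced by the addition property $(4)_n$ for the projection $\Pi=\Pi_1^n$, and the formula follows by combining $(4)_n$ with the partition $\Pi(X)=X(0)\cup X(1)$ (every nonempty fiber in $M^1$ has dimension $0$ or $1$) and $(2)_n$. This recursion gives part (1) at once by induction on $n$: the value $\dim X$ is computed from $\dim X(0)$ and $\dim X(1)$, both on sets in $\myDef_{n-1}(\mathcal M)$, so everything is determined by the restriction of $\dim$ to $\myDef_1(\mathcal M)$. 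To reach parts (2) and (3) I would first promote $(4)_n$ to a \emph{general addition formula}: for $X\in\myDef_n(\mathcal M)$, any projection $\Pi_d^n:M^n\to M^{n-d}$ and any $0\le i\le d$, the set $X[i]:=\{x\in\Pi_d^n(X)\;|\;\dim X_x^{\Pi_d^n}=i\}$ belongs to $\myDef_{n-d}(\mathcal M)$ and $\dim(X\cap(\Pi_d^n)^{-1}(X[i]))=\dim X[i]+i$.

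I would prove the general formula by induction on $d$, the case $d=1$ being $(4)_n$. For the inductive step write points of $M^n$ as $(u,w,t)$ with $u\in M^{n-d}$, $w\in M^{d-1}$, $t\in M^1$, and observe that the last-coordinate fibers of the inner fiber $X_u^{\Pi_d^n}\subseteq M^d$ are exactly the one-variable fibers $X_{(u,w)}^{\Pi}$ of $X$. Applying the one-step recursion inside $M^d$ to $X_u^{\Pi_d^n}$ yields $\dim X_u^{\Pi_d^n}=\max\{\dim (X(0))_u,\ \dim (X(1))_u+1\}$, where the fibers of $X(0),X(1)\in\myDef_{n-1}(\mathcal M)$ are now taken over $\Pi_{d-1}^{n-1}$. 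The induction hypothesis applied to $X(0)$ and $X(1)$ supplies the definable level sets and the addition identity at level $d-1$, from which the data for $X[i]$ are assembled. I expect this step to be the main obstacle: because the fiber dimension is a maximum of two terms, one shifted by $1$, expressing the level set $\{\dim X_u^{\Pi_d^n}=i\}$ forces a case split (either the unshifted term attains and dominates at $i$, or the shifted term attains $i-1$), and one must check that summing the contributions through $(2)_n$ and the level-$(d-1)$ identity reproduces exactly $\dim X[i]+i$ rather than something larger.

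For part (2) I pass to the graph $\Gamma:=\{(x,f(x))\;|\;x\in X\}\in\myDef_{m+n}(\mathcal M)$. Forgetting the $f$-coordinates one at a time and using $(4)$ with $i=0$ (a singleton fiber has dimension $0$, so projection preserves dimension) gives $\dim\Gamma=\dim X$. On the other hand, permuting coordinates by $(3)$ so that the $f$-values come first and then forgetting the $X$-coordinates one at a time cannot raise dimension, since $(4)_n$ and $(2)_n$ give $\dim\Pi(Z)=\max\{\dim Z(0),\dim Z(1)\}\le\dim Z$; hence $\dim f(X)\le\dim\Gamma=\dim X$. If $f$ is injective, its inverse has the (permuted, hence definable) graph $\Gamma$, so the same inequality applied to $f^{-1}$ gives $\dim X=\dim f^{-1}(f(X))\le\dim f(X)$, and equality follows.

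For part (3) I again use $(3)$ to put the $f$-values in the first $n$ coordinates and apply the general addition formula to $\Pi_m^{m+n}$ forgetting the $X$-coordinates. The fiber of this graph over $y$ is a copy of $f^{-1}(y)$, so $B(i)$ is precisely the definable level set $X[i]$ furnished by the formula, whence $B(i)\in\myDef_n(\mathcal M)$ and $\dim\big(\Gamma\cap(\Pi_m^{m+n})^{-1}(B(i))\big)=\dim B(i)+i$. Finally, $\Gamma\cap(\Pi_m^{m+n})^{-1}(B(i))$ is the graph of $f$ restricted to $f^{-1}(B(i))$, so the singleton-fiber argument from part (2) identifies its dimension with $\dim f^{-1}(B(i))$, giving $\dim f^{-1}(B(i))=\dim B(i)+i$.
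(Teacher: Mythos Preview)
The paper does not supply its own proof of this proposition; it simply records the citation to van den Dries \cite[1.3, Corollary~1.5]{vdD2} and moves on. Your outline is correct and follows the standard route. In particular, the ``general addition formula'' you isolate as the key lemma and prove by induction on $d$ is exactly what the paper later states and proves independently as Lemma~\ref{lem:5n} (property $(5)_n$), there under the weaker hypotheses of a weak dimension function; the inductive scheme and the case split on which term in the maximum $\max\{j_0,j_1+1\}$ attains are essentially the same. Your graph arguments for parts (2) and (3) are the standard ones. (Incidentally, you are right that $B(i)\in\myDef_n(\mathcal M)$; the ``$\myDef_m$'' in the displayed statement is a typo.)
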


We introduce a weaker notion than that in Definition \ref{def:dimension}, which turns out to be equivalent to that in Definition \ref{def:dimension}.
We show the equivalence in Proposition \ref{prop:weak}.
\begin{definition}\label{def:dimension2}
	Let $\mathcal M=(M,\ldots)$ be a structure.
	A  function $\dim: \myDef(\mathcal M) \to \mathbb N \cup \{-\infty\}$ on $\mathcal M$ is a \textit{weak dimension function} on $\mathcal M$ if conditions (1), $(2)_1$ in Definition \ref{def:dimension} and the following condition (3'), which is equivalent to $(3)_2$ in Definition \ref{def:dimension}, and $(4')_n$ for $n>1$ is satisfied:
	\begin{enumerate}
		\item[$(3')$] $\dim (X)=\dim(X^{\text{switch}})$ for each $X \in \myDef_2(\mathcal M)$, where $$X^{\text{switch}}:=\{(x_1,x_2) \in M^2\;|\; (x_2,x_1) \in X\}.$$
		\item[$(4')_n$] Suppose $n>1$. Let $X \in \myDef_n(\mathcal M)$. Put $$X(i):=\{x \in \Pi(X)\;|\; \dim X_x^{\Pi} = i\}$$ for $i=0,1$.
		Then, $X(i) \in \myDef_{n-1}(\mathcal M)$ and 
		\begin{equation}
			\dim(X)=\max\{\dim(X(i))+i\;|\; i=0,1\} \tag{**} \label{eq:4ndash}
		\end{equation}
	\end{enumerate}
	Observe $(4')_n$ implies $(4)_n$.
	In fact, if we apply equality (\ref{eq:4ndash}) to $X \cap \Pi^{-1}(X(i))$ for $i=0,1$, we get equality (\ref{eq:4n}) in Definition \ref{def:dimension}$(4)_n$.
\end{definition}

Now we start to prove that a weak dimension function is a dimension function.

\begin{lemma}\label{lem:2n}
	Let $\mathcal M=(M,\ldots)$ be a structure.
	Let $\dim: \myDef(\mathcal M) \to \mathbb N \cup \{-\infty\}$ be a function.
	Let $n$ be a positive integer with $n>1$.
	If $(2)_1$, $(2)_{n-1}$ in Definition \ref{def:dimension} and $(4')_n$ in Definition \ref{def:dimension2} hold for $\dim$, then $(2)_n$ holds for $\dim$.
\end{lemma}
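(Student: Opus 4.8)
The plan is to reduce the union property $(2)_n$ to the single additivity formula
\[
\dim S=\max\{\dim S(0),\dim S(1)+1\}\qquad(\star)
\]
for every $S\in\myDef_n(\mathcal M)$, where $S(0),S(1)$ are the fibre-dimension loci supplied by $(4)_n$. Writing $S_i:=S\cap\Pi^{-1}(S(i))$, property $(4)_n$ gives the disjoint decomposition $S=S_0\sqcup S_1$ together with $\dim S_0=\dim S(0)$ and $\dim S_1=\dim S(1)+1$, so $(\star)$ is exactly the assertion $\dim S=\max\{\dim S_0,\dim S_1\}$: the max-union property for the two uniform-fibre slices of $S$.

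First I would record the fibrewise bookkeeping. For $Z:=X\cup Y$ one has $Z_x^{\Pi}=X_x^{\Pi}\cup Y_x^{\Pi}$, so $(2)_1$ yields $\dim Z_x^{\Pi}=\max\{\dim X_x^{\Pi},\dim Y_x^{\Pi}\}$; hence $Z(1)=X(1)\cup Y(1)$, $Z(0)\subseteq X(0)\cup Y(0)$, and $Z(0)\cup Z(1)=\Pi(X)\cup\Pi(Y)$, all lying in $\myDef_{n-1}(\mathcal M)$. Granting $(\star)$ for $X$, $Y$ and $Z$, the conclusion $(2)_n$ follows purely from $(2)_{n-1}$: the upper bound $\dim Z\le\max\{\dim X,\dim Y\}$ comes from monotonicity of $Z(0)\subseteq X(0)\cup Y(0)$ together with the max-union computation of $\dim Z(1)$ in $M^{n-1}$, while $\dim Z\ge\dim X$ follows by decomposing $X(0)=(X(0)\cap Z(0))\cup(X(0)\cap Z(1))$ and comparing each piece with the slices of $Z$. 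This step is routine once $(\star)$ is in hand.

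The real content is therefore $(\star)$, and two partial reductions are clean. If every nonempty fibre of a set $B\in\myDef_n(\mathcal M)$ is $0$-dimensional, then $B(1)=\emptyset$, so $(4)_n$ gives $\dim B=\dim\Pi(B)$; the same applies to any definable $A\subseteq B$ (its fibres are $0$-dimensional as well), whence $\dim A=\dim\Pi(A)\le\dim\Pi(B)=\dim B$ by $(2)_{n-1}$. This proves $(\star)$ and monotonicity in the uniform $0$-dimensional case. Dually, the \emph{fattening} $S^+:=S\cup\Pi^{-1}(S(0))$ has only $1$-dimensional fibres, so $(4)_n$ computes $\dim S^+=\dim\Pi(S)+1$; combined with the uniform-fibre case this already settles $(\star)$ whenever $\dim S(0)\le\dim S(1)$.

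The main obstacle is the complementary regime $\dim S(0)>\dim S(1)$, where $(\star)$ demands $\dim S=\dim S(0)$ even though $S$ carries $1$-dimensional fibres over the strictly lower-dimensional base $S(1)$. This is precisely a subadditive upper bound $\dim S\le\max\{\dim S_0,\dim S_1\}$, and here the per-slice equalities furnished by $(4)_n$ give no \emph{a priori} ceiling on $\dim S$: monotonicity, subadditivity and $(\star)$ are all interderivable through this single two-slice combination, so none can be invoked to bootstrap the others. I expect to break the circularity by writing $S=S^+\setminus(\Pi^{-1}(S(0))\setminus S)$, using that $\Pi^{-1}(S(0))\setminus S$ again has uniform $1$-dimensional fibres (apply $(2)_1$ to the complementary fibres in $M$) with dimension $\dim S(0)+1$, and pushing the resulting comparison down to $M^{n-1}$ through $(4)_n$ and $(2)_{n-1}$, possibly with an induction on $\dim\Pi(S)$. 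Making this upper bound rigorous is the crux of the proof.
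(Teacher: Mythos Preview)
Your diagnosis is exactly right: the formula $(\star)$ is the whole content of the lemma, and your argument covers the easy regime cleanly. The hard case you flag is a genuine obstacle, and your proposed workaround via $S^{+}=S\cup\Pi^{-1}(S(0))$ does not close it: knowing $\dim S^{+}=\dim\Pi(S)+1$ and $\dim(\Pi^{-1}(S(0))\setminus S)=\dim S(0)+1$ hands you two pieces of a disjoint union, but relating their dimensions to $\dim S$ is precisely the max--union property $(2)_n$ you are trying to prove, so the circularity you noticed is not broken.

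In fact the lemma as stated is false, and the paper's own proof has the very gap you isolated. The paper writes ``$\dim(X)=\dim(S_0(X))$ by $(4)_n$'' in the case $\dim S_0(X)>\dim S_1(X)$, and concludes each case with $\dim(X\cup Y)=\max\{\dim S_0(X\cup Y),\dim S_1(X\cup Y)+1\}$; both assertions are instances of $(\star)$, and $(4)_n$ does not deliver them --- it only computes the dimensions of the uniform-fibre slices $X\cap\Pi^{-1}(X(i))$, never of $X$ itself. A counterexample at $n=2$: take any $\dim$ on $\myDef_1(\mathcal M)$ satisfying $(1)$ and $(2)_1$, and extend it to $\myDef_2(\mathcal M)$ by setting $\dim X:=\dim\Pi(X)+i$ whenever every nonempty fibre of $X$ has dimension $i$, and $\dim X:=100$ when $X$ has fibres of both dimensions. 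Then $(4)_2$ holds (the slices $X\cap\Pi^{-1}(X(i))$ always have uniform fibres, so their dimensions are prescribed correctly) but $(2)_2$ fails for any union of a uniformly $0$-fibred set with a uniformly $1$-fibred set over disjoint bases. So neither your route nor the paper's can be completed from the three stated hypotheses alone; you were right to be suspicious.
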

\begin{proof}
	For every definable subset $X$ of $M^n$, we put $$S_i(X):=\{x \in M^{n-1}\;|\;\dim X_x^{\Pi} =i\}$$ for $i=0,1$.
	By $(2)_1$, we have 
	\begin{align*}
		& S_1(X \cup Y)=S_1(X ) \cup S_1(Y)\text{ and }\\
		& S_0(X \cup Y)=(S_0(X) \setminus S_1(Y)) \cup (S_0(Y) \setminus S_1(X)).
	\end{align*}
	By symmetry, we may suppose $\dim X \geq \dim Y$.
	
	Suppose $\dim S_0(X)>\dim S_1(X)$.
	We have $\dim(X)=\dim(S_0(X))$ by $(4')_n$.
	We want to show $$\dim (S_0(X) \setminus S_1(Y)) = \dim X.$$
	Assume for contradiction $\dim (S_0(X) \setminus S_1(Y)) \neq \dim X$.
	Observe that 
	\begin{align*}
		\dim \Pi(X) &= \max \{\dim S_1(X), \dim S_0(X)\} \leq  \max\{\dim S_1(X)+1,\dim S_0(X)\}\\
		&= \dim X
	\end{align*}
	by $(2)_{n-1}$ and $(4')_n$.
	By $(2)_{n-1}$ and this inequality, we have $\dim (S_0(X) \setminus S_1(Y)) < \dim X$.
	We have $$\dim X = \dim S_0(X) =\max \{\dim (S_0(X) \cap S_1(Y)) , \dim  (S_0(X) \setminus S_1(Y))\}$$
	by $(2)_{n-1}$. 
	We obtain $\dim (S_0(X) \cap S_1(Y)) = \dim X$ because $\dim (S_0(X) \setminus S_1(Y)) < \dim X$.
	We get 
	\begin{align*}
		\dim Y > \dim S_1(Y) \geq \dim S_0(X) \cap S_1(Y) = \dim X
	\end{align*}
	by $(4')_n$ and $(2)_{n-1}$, which is absurd.
	We have shown $\dim (S_0(X) \setminus S_1(Y)) = \dim X$.
	
	On the other hand, we have 
	\begin{align*}
		\dim (S_0(Y) \setminus S_1(X)) \leq \dim S_0(Y) \leq \dim Y \leq \dim X
	\end{align*}
	by $(2)_{n-1}$, $(4')_n$ and the case hypothesis.
	We have
	\begin{align*}
		\dim S_0(X \cup Y)&=\max\{\dim(S_0(X) \setminus S_1(Y)),\dim(S_0(Y) \setminus S_1(X))\} =\dim X
	\end{align*}
	by $(2)_{n-1}$.
	
	We also have 
	\begin{align*}
		\dim S_1(X \cup Y) &=\max \{\dim S_1(X),\dim S_1(Y)\} \\
		&<\max \{\dim X, \dim Y\}=\dim X
	\end{align*}
	by $(2)_{n-1}$ and $(4')_n$.
	We get $$\dim (X \cup Y) = \max\{\dim S_0(X \cup Y),\dim S_1(X \cup Y)+1\}=\dim X.$$
	
	Suppose $\dim S_0(X) \leq \dim S_1(X)$.
	We have $\dim(X)=\dim(S_1(X))+1$ by $(4')_n$.
	We have 
	\begin{align*}
		&\dim (S_0(X) \setminus S_1(Y)) \leq \dim S_0(X) \leq \dim X
	\end{align*}
	by $(2)_{n-1}$ and $(4')_n$.
	We get $\dim (S_0(Y) \setminus S_1(X)) \leq \dim Y$ in the same manner.
	We obtain  $\dim (S_0(Y) \setminus S_1(X)) \leq \dim X$ by the case hypothesis.
	Therefore, we have $\dim S_0(X \cup Y) \leq \dim X$.
	It is obvious $\dim S_1(Y) < \dim Y \leq \dim X$ by $(4')_n$.
	We have $$\dim S_1(X \cup Y) = \max\{\dim S_1(X), \dim S_1(Y)\} = \dim X-1.$$
	Therefore, $\dim (X \cup Y) =\dim X$ by $(4')_n$. 
\end{proof}

\begin{lemma}\label{lem:5n}
	Let $\mathcal M=(M,\ldots)$ be a structure.
	Let $\dim: \myDef(\mathcal M) \to \mathbb N \cup \{-\infty\}$ be a function.
	Let $n$ be a positive integer.
	Consider the following property $(5)_n$.
	\begin{enumerate}
		\item[$(5)_n$] Suppose $n>1$. For $0<d<n$ and $0 \leq k \leq d$, put $$X(d,k):=\{x \in \Pi_d^n(X)\;|\; \mydim X_x^{\Pi_d^n} = k\}.$$
		Then, $X(d,k) \in \myDef_{n-d}(\mathcal M)$ and 
		\begin{equation*}
			\dim (X \cap (\Pi_d^n)^{-1}(X(d,k))) = \dim X(d,k)+k. 
		\end{equation*}
	\end{enumerate}
	If $(2)_i$ and $(4)_i$ in Definition \ref{def:dimension} hold for $\dim $ and $i \leq n$, then $(5)_n$ holds for $\dim$.
\end{lemma}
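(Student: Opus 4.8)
The plan is to prove $(5)_n$ by induction on the projection depth $d$, treating all ambient dimensions at most $n$ simultaneously and peeling off the coordinates one at a time so as to reduce everything to $(2)_i$ and $(4)_i$. The base case $d=1$ is literally $(4)_n$: for $d=1$ the sets $X(1,k)$ coincide with the sets $X(k)$ of Definition \ref{def:dimension}, and the asserted equality is exactly the addition property. For the inductive step I would factor $\Pi_d^n=\rho\circ\Pi$, where $\Pi=\Pi_1^n$ forgets the last coordinate and $\rho=\Pi_{d-1}^{n-1}$ forgets the remaining $d-1$ coordinates. Writing $X(0),X(1)\subseteq M^{n-1}$ for the loci where the $\Pi$-fiber of $X$ has dimension $0$ and $1$, condition $(4)_n$ makes these definable. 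A convenient preliminary is a relative form of $(4)_n$: applying $(4)_n$ to $X\cap\Pi^{-1}(S)$ for definable $S\subseteq X(i)$ gives $\dim(X\cap\Pi^{-1}(S))=\dim S+i$, since every nonempty $\Pi$-fiber of that set has dimension $i$.

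Next I would express the $\Pi_d^n$-fiber dimension of $X$ through $X(0)$ and $X(1)$. For $x\in\Pi_d^n(X)$ the fiber $F:=X_x^{\Pi_d^n}\subseteq M^d$ has $\Pi_1^d$-fibers equal to the $\Pi$-fibers of $X$ over the corresponding points, so its dimension-$i$ locus equals $(X(i))_x^{\rho}$; applying $(4)_d$ and $(2)_d$ to $F$ yields $\dim X_x^{\Pi_d^n}=\max\{\dim (X(0))_x^{\rho},\,\dim (X(1))_x^{\rho}+1\}$, where an empty inner fiber simply drops out and $F\neq\emptyset$ guarantees at least one surviving term. The induction hypothesis (projection depth $d-1$ in ambient dimension $n-1$), applied to $X(0)$ and $X(1)$ with $\rho$, supplies the definable sets $X(i)(d-1,j)=\{x\mid\dim (X(i))_x^{\rho}=j\}$ with their addition formulas, and via the same restriction trick a relative version over any definable subset of $X(i)(d-1,j)$.

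Definability of $X(d,k)$ then follows, because the displayed maximum identity rewrites the condition $\dim X_x^{\Pi_d^n}=k$ as a Boolean combination of the $X(i)(d-1,j)$ (the empty-fiber loci being the complements of the definable sets $\bigcup_j X(i)(d-1,j)=\rho(X(i))$). For the dimension equality I would split $X\cap(\Pi_d^n)^{-1}(X(d,k))$ along $X(0)$ and $X(1)$; the relative form of $(4)_n$ reduces it to computing $\dim W_i$ for $W_i=X(i)\cap\rho^{-1}(X(d,k))$, and the relative induction formula reduces $\dim W_i$ to a maximum of terms $\dim(X(d,k)\cap X(i)(d-1,j))+j$. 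Using that on $X(d,k)$ one has $j\le k$ for the $X(0)$-terms and $j\le k-1$ for the $X(1)$-terms, together with the containment $X(d,k)\subseteq X(0)(d-1,k)\cup X(1)(d-1,k-1)$, shows that each term is at most $\dim X(d,k)+k$ and that this value is attained, giving the desired $\dim(X\cap(\Pi_d^n)^{-1}(X(d,k)))=\dim X(d,k)+k$.

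The genuinely delicate part is this final bookkeeping: one must track which of $X(0),X(1)$ carries the top dimension, treat the boundary value $k=0$ (where the $X(1)$-piece is forced empty), and, since condition (1) is not among the hypotheses, never invoke $\dim\emptyset=-\infty$, instead casing on emptiness of the relevant fibers and pieces and using only the monotonicity built into $(2)_i$. Everything else is a routine, if somewhat lengthy, manipulation of the maximum identity.
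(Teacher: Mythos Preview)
Your proposal is correct and follows essentially the same route as the paper: both induct on $(n,d)$, peel off the last coordinate via $(4)_n$ to obtain $X(0),X(1)$, apply the inductive hypothesis (the paper's $(5)_{n-1}$) to these along $\rho=\Pi_{d-1}^{n-1}$, and recover the fiber formula $\dim X_x^{\Pi_d^n}=\max\{j_0,j_1+1\}$ before reassembling with $(2)_i$ and $(4)_i$. The only cosmetic difference is that the paper packages the bookkeeping into the doubly indexed sets $V\langle j_0,j_1\rangle=U\langle0,j_0\rangle\cap U\langle1,j_1\rangle$ and absorbs the empty-fiber case by allowing $j=-\infty$ in the index set, whereas you handle emptiness by explicit casing.
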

\begin{proof}
	We prove this lemma by induction on $n$ and $d$ under the lexicographic order.
	If $n=2$, $(5)_n$ follows from $(4)_2$.
	If $d=1$, $(5)_n$ follows from $(4)_n$.
	We suppose $d>1$.
	Let $X(i)$ be as in the statement of $(4)_n$ for $i=0,1$.
	Put $\mathcal I:=\{0 \leq j <d\} \cup \{-\infty\}$.
	Define $-\infty + k = k + (-\infty) =-\infty$ for $k \in \mathcal I$.
	Apply $(5)_{n-1}$ to $X(i)$, then $$U\langle i,j \rangle := \{t \in \Pi_d^n(X)\;|\; \dim (X(i))_t^{\Pi_{d-1}^{n-1}} =j \}$$ is definable for $i=0,1$ and $j \in \mathcal I$.
	Apply $(4)_d$ to $(X \cap \Pi^{-1}(X(i)))_x^{\Pi_d^n}$, then we have $$\dim (X \cap \Pi^{-1}(X(i)))_x^{\Pi_d^n}=i+j$$ for every $x \in U\langle i,j \rangle$ by the definition of $X(i)$.
	Put $V\langle j_0,j_1 \rangle:=U\langle 0,j_0 \rangle \cap U\langle 1,j_1 \rangle$ for $j_0,j_1 \in \mathcal I$.
	$V\langle j_0,j_1 \rangle$ is definable.
	For $x \in V\langle j_0,j_1 \rangle$, we have 
	\begin{align}
		\dim (X_x^{\Pi_d^n})&=\max\{\dim (X \cap \Pi^{-1}(X(i)))_x^{\Pi_d^n}\;|\; i=0,1 \}
		=\max\{j_0,j_1+1\}\tag{a}\label{eq:c}
	\end{align}
	by $(2)_d$.
	The previous equality implies
	\begin{equation}
		X(d,k)=\bigcup_{(j_0,j_1) \in \mathcal I \times \mathcal I, \max\{j_0,j_1+1\}=k} V\langle j_0,j_1 \rangle. \tag{b} \label{eq:d}
	\end{equation}
	$X(d,k)$ is definable because $V\langle j_0,j_1 \rangle$ are so.
	
	Apply $(5)_{n-1}$ to $X(i) \cap (\Pi_{d-1}^{n-1})^{-1}(V\langle j_0,j_1 \rangle)$, then we have
	\begin{align*}
		\dim (X(i) \cap (\Pi_{d-1}^{n-1})^{-1}(V\langle j_0,j_1 \rangle)) = \dim V\langle j_0,j_1 \rangle + j_i
	\end{align*}
	for $i=0,1$.
	By $(4)_n$, we get
	\begin{align*}
		\dim (X \cap \Pi^{-1}(X(i) \cap (\Pi_{d-1}^{n-1})^{-1}(V\langle j_0,j_1 \rangle)))
		= \dim V\langle j_0,j_1 \rangle + j_i + i.
	\end{align*}
	Since $\Pi(X)=X(0) \cup X(1)$, we have 
	\begin{align*}
		&\bigcup_{i=0}^1 X \cap \Pi^{-1}(X(i) \cap (\Pi_{d-1}^{n-1})^{-1}(V\langle j_0,j_1 \rangle))\\
		&\qquad = X \cap \Pi^{-1}(\Pi(X) \cap (\Pi_{d-1}^{n-1})^{-1}(V\langle j_0,j_1 \rangle))\\
		&\qquad = X \cap (\Pi_d^n)^{-1}(V\langle j_0,j_1 \rangle).
	\end{align*}
	Therefore, by $(2)_n$, we get
	\begin{align*}
		\dim(X \cap (\Pi_d^n)^{-1}(V\langle j_0,j_1 \rangle)) = \dim V\langle j_0,j_1 \rangle + \max\{j_0,j_1 + 1\}. \tag{c} \label{eq:e}
	\end{align*}
	Using $(2)_{n-d}$, $(2)_n$ and equations (\ref{eq:c}-\ref{eq:e}), we obtain the equation in $(5)_n$.
\end{proof}

Proposition \ref{prop:weak} makes it easier to prove that given functions are dimension functions. 

\begin{proposition}\label{prop:weak}
	Every weak dimension function on a structure $\mathcal M$ is a dimension function. 
\end{proposition}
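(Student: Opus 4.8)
The plan is to check the two axioms of Definition \ref{def:dimension} that are not built into the notion of a weak dimension function, namely $(2)_n$ and $(3)_n$ for all $n$; conditions (1) and $(4)_n$ are assumed outright, while $(2)_1$ and $(3)_2$ (in the guise of $(3')$) are also available. Throughout I would use the elementary observation, coming from monotonicity in $(2)_1$ and $\dim(\{a\})=0$, $\dim(M^1)=1$, that a nonempty fiber in $M^1$ has dimension in $\{0,1\}$; this is what makes $\Pi(X)=X(0)\cup X(1)$ and legitimizes writing $\dim X=\max_{k}(\dim X(k)+k)$ from $(2)_n$ and $(4)_n$.

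First I would obtain $(2)_n$ for every $n$ by induction, feeding the hypotheses straight into Lemma \ref{lem:2n}: the base case $(2)_1$ is assumed, and the step from $(2)_1$, $(2)_{n-1}$, $(4)_n$ to $(2)_n$ is exactly that lemma. Once $(2)_n$ is available for all $n$, Lemma \ref{lem:5n} delivers the iterated addition property $(5)_n$ for every $n$, since its hypotheses $(2)_i$ and $(4)_i$ for $i\le n$ are now all in hand ($(4)_1$ being vacuous).

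The substantial step is $(3)_n$, which I would prove by induction on $n$. Since the adjacent transpositions $s_i=(i,i+1)$ generate the symmetric group and $X^{\tau\rho}=(X^{\rho})^{\tau}$, it suffices to verify $\dim X=\dim X^{s_i}$ for each $i$; the cases $n\le 2$ are $(3')$. In the step I would split the generators into two regimes according to which tool isolates the two swapped coordinates. If $i\le n-2$, the swap fixes the last coordinate, so over $\Pi=\Pi^n_1$ one computes $(X^{s_i})_{\bar x}^{\Pi}=X_{s_i(\bar x)}^{\Pi}$, whence the graded pieces satisfy $(X^{s_i})(k)=(X(k))^{s_i}$ in $M^{n-1}$; the inductive hypothesis $(3)_{n-1}$ gives $\dim (X(k))^{s_i}=\dim X(k)$, and then $\dim X^{s_i}=\max_k(\dim (X^{s_i})(k)+k)=\max_k(\dim X(k)+k)=\dim X$ by $(4)_n$ and $(2)_n$. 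If $i=n-1$, the swap acts on the last two coordinates, so I would use $\Pi^n_2$ with two-dimensional fibers: here $(X^{s_{n-1}})_t^{\Pi^n_2}=(X_t^{\Pi^n_2})^{\text{switch}}$, so $(3')$ forces the fiber dimensions, and hence the sets $X^{s_{n-1}}(2,k)$ and $X(2,k)$, to coincide; then $(5)_n$ and $(2)_n$ give $\dim X^{s_{n-1}}=\max_k(\dim X(2,k)+k)=\dim X$.

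I expect the permutation axiom $(3)_n$ to be the main obstacle. The difficulty is that the two available symmetries are mismatched in scope: $(3')$ moves only two coordinates, while the projections $\Pi^n_d$ strip off only \emph{trailing} coordinates. The resolution is precisely the case split above, so that each adjacent transposition is handled by whichever device isolates exactly the pair being exchanged, with the induction on $n$ supplying the permutations that lie strictly among the first $n-1$ coordinates and the direct $\Pi^n_2$–$(3')$–$(5)_n$ argument supplying the terminal swap.
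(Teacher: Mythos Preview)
Your proposal is correct and follows essentially the same route as the paper: first obtain $(2)_n$ via Lemma~\ref{lem:2n}, then $(5)_n$ via Lemma~\ref{lem:5n}, and finally prove $(3)_n$ by induction on $n$, reducing to adjacent transpositions and splitting into the case where the swap avoids the last coordinate versus the case where it swaps the last two. The only cosmetic difference is in the first case: you project off just the last coordinate and invoke $(4)_n$ directly, whereas the paper projects off the last $n-l$ coordinates and uses $(5)_n$; your version is slightly more economical there, but the structure of the argument is the same.
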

\begin{proof}
	Note that $(2)_n$ holds for $n \geq 1$ by Lemma \ref{lem:2n}.
	Let $(5)_n$ be the property described in Lemma \ref{lem:5n}.
	In our setting, property $(5)_n$ holds for every $n>1$.
	By induction on $n$, we prove property $(3)_n$ in Definition \ref{def:dimension} holds if $\dim$ satisfies the conditions in Definition \ref{def:dimension2}.
	
	If $n=2$, property $(3)_n$ in Definition \ref{def:dimension} immediately follows from property $(3')$ in Definition \ref{def:dimension2}.
	
	Suppose $n>2$.
	Every permutation is a product of transpositions of the form $(l-1,l)$.
	Therefore, we may assume that $\sigma$ is the transposition $(l-1,l)$ without loss of generality.
	
	Suppose $l<n$.
	Put $d:=n-l$.
	We use the notations in $(5)_n$ of Lemma \ref{lem:5n}.
	Let $X^{\sigma}(d,k):=\{x \in \Pi_d^n(X^{\sigma})\;|\; \mydim (X^{\sigma})_x^{\Pi_d^n} = k\}$.
	Put $$(X(d,k))^{\sigma}:=\{(x_1,\ldots,x_{l-2},x_l,x_{l-1}) \in M^l\;|\; (x_1,\ldots,x_l) \in X(d,k)\}.$$
	We obviously have $X^{\sigma}(d,k)=(X(d,k))^{\sigma}$.
	By the induction hypothesis, we have $\dim X^{\sigma}(d,k) = \dim (X(d,k))^{\sigma} =  \dim X(d,k)$.
	By $(2)_n$ and $(5)_n$, we have
	\begin{align*}
		\dim X &= \max\{\dim (X \cap (\Pi_d^n)^{-1}(X(d,k)))\;|\; 0 \leq k \leq d\}\\
		&= \max\{\dim X(d,k) + k\;|\; 0 \leq k \leq d\}\\
		&= \max\{\dim X^{\sigma}(d,k) + k\;|\; 0 \leq k \leq d\}\\
		&= \max\{\dim (X^{\sigma} \cap (\Pi_d^n)^{-1}(X^{\sigma}(d,k)))\;|\; 0 \leq k \leq d\}\\
		&=\dim X^{\sigma}.
	\end{align*}
	
	Suppose $l=n$.
	Apply property $(3')$ in Definition \ref{def:dimension2} to $X_x^{\Pi_2^n} \subseteq M^2$ for  $x \in \Pi_2^n(X)$.
	Then, we get $\dim(X_x^{\Pi_2^n})= \dim ((X^{\sigma})_x^{\Pi_2^n})$ for every $x \in \Pi_2^n(X)$.
	Therefore, we have $X^{\sigma}(2,k)=X(2,k)$ for $0 \leq k \leq 2$.
	Using $(2)_n$ and $(5)_n$, we can prove $\dim X=\dim X^{\sigma}$ in the same manner as in the previous case.
	We omit the details.
\end{proof}

\section{Dimension functions on structures}\label{sec:uniq}
In this section, we compute $\myNum(\mathcal M)$ for several structures.

\subsection{Non-ordered case}
First we introduce famous non-ordered structures $\mathcal M$ with $\myNum(\mathcal M)=1$. 
Suppose $\mathcal L$ is countable.
Recall that a structure $\mathcal N=(N,\ldots)$ is \textit{minimal} if, for every definable subset $X$ of $N$, either $X$ or $N \setminus X$ is finite.
Let $\mathcal M =(M,<)$ be a \textit{strongly minimal} structure, that is, it is a minimal structure such that every structure elementarily equivalent to it is also minimal. 
$(M,\operatorname{acl})$ is a pregeometry by \cite[Lemma B.1.5]{Zilber}, where $\operatorname{acl}$ denotes the algebraic closure operator.
Suppose $\myrk^{\operatorname{acl}}(M/\emptyset)$ is infinite.
The Morley rank of a subset $X$ of $M^n$ defined over a finite subset $A$ of $M$ is defined by $\myrk_{\text{Morley}}(X) :=\myrk_{\mathcal M}^{\operatorname{acl}}(X/A)$.

\begin{proposition}\label{prop:strongly}
	The Morley rank is a unique dimension function. 
\end{proposition}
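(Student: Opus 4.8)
The plan is to prove two things: that the Morley rank is a dimension function, and that it is the only one. For the first part, I would verify that $\myrk_{\text{Morley}}$ satisfies the conditions in Definition \ref{def:dimension}, or more economically, the weaker conditions of Definition \ref{def:dimension2}, invoking Proposition \ref{prop:weak} to upgrade a weak dimension function to a genuine dimension function. The normalization in condition (1) is immediate: $\myrk_{\text{Morley}}(\emptyset)=-\infty$, points have rank $0$, and $M$ has rank $1$ by strong minimality. Conditions $(2)_1$ (finite additivity for unions) and the symmetry condition (3') follow from standard properties of Morley rank in $\omega$-stable theories. The key structural input is the addition property $(4)_n$, which for Morley rank is precisely the statement relating the rank of a definable set to the rank of its projection and the generic fiber dimension; this is the Morley-rank version of the classical fiber-dimension theorem. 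Since the pregeometry $(M,\operatorname{acl})$ has infinite rank by hypothesis, the rank of $M$ is genuinely $1$ and no degeneration occurs.

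For the uniqueness part, I would exploit Proposition \ref{prop:dim}(1): any dimension function $\dim$ is completely determined by its restriction to $\myDef_1(\mathcal M)$. So it suffices to show that every dimension function agrees with Morley rank on definable subsets of $M^1$. By minimality, every definable $X \subseteq M$ is either finite or cofinite. If $X$ is finite, repeated application of $(2)_1$ together with $\dim(\{a\})=0$ forces $\dim(X)=0$ (and $\dim(\emptyset)=-\infty$). If $X$ is cofinite, then $M = X \cup (M \setminus X)$ with $M \setminus X$ finite, so $(2)_1$ gives $\dim(M)=\max\{\dim(X),\dim(M\setminus X)\}$; since $\dim(M)=1$ and $\dim(M\setminus X)=0$, we conclude $\dim(X)=1$. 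This matches Morley rank exactly, so any dimension function coincides with $\myrk_{\text{Morley}}$ on $\myDef_1(\mathcal M)$, and hence everywhere by Proposition \ref{prop:dim}(1).

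I expect the main obstacle to be the verification of the addition property $(4)_n$ for Morley rank rather than the uniqueness argument, which is essentially forced. One must confirm that the two sets $X(0)$ and $X(1)$, defined by the fiber rank, are definable and that the rank equation \eqref{eq:4n} holds. This is where the theory of Morley rank and Morley degree in $\omega$-stable structures does the real work: definability of the fiber-rank strata comes from the definability of Morley rank over models of $\omega$-stable theories, and the additivity $\dim(X \cap \Pi^{-1}(X(i)))=\dim(X(i))+i$ is the fibered-dimension formula. A subtlety worth checking is that the fiber ranks can only take the values $0$ and $1$ here because fibers are subsets of $M^1$ and $M$ is strongly minimal; this keeps the index set in $(4)_n$ to $i \in \{0,1\}$ as required. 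Once $(4)_n$ is in place, Lemma \ref{lem:2n}, Lemma \ref{lem:5n}, and Proposition \ref{prop:weak} handle the remaining conditions automatically.
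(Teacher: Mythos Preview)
Your proposal is correct and matches the paper's approach: the paper simply cites \cite[Lemma B.1.26]{Zilber} for the fact that Morley rank is a dimension function (you instead sketch the verification directly via Proposition~\ref{prop:weak}, which amounts to unpacking that citation), and your uniqueness argument via Proposition~\ref{prop:dim}(1) together with the finite/cofinite dichotomy is exactly what the paper gives.
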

\begin{proof}
	The Morley rank is a dimension function by \cite[Lemma B.1.26]{Zilber}.
	The uniqueness easily follows from Definition \ref{def:dimension} and Proposition \ref{prop:dim}(1) because every univariate definable subset is either finite or cofinite.
\end{proof}

\subsection{Upper bound of number of dimension functions for ordered structures}
Our next target is Theorem \ref{thm:unique}, which asserts $\myNum(\mathcal M) \leq 1$ for some ordered structures.
We prove several lemmas.
\begin{lemma}\label{lem:unbounded0}
	Suppose $\mathcal M=(M,<,\ldots)$ is an expansion of a linear order.
	Let $\dim: \myDef(\mathcal M) \to \mathbb N \cup \{-\infty\}$ be a dimension function.
	Let $X$ be a definable subset of $M$ such that at least one of the following conditions is satisfied:
	\begin{enumerate}
		\item[(i)] For every $a \in X$, $\dim (\{x \in X\;|\;x<a\}) \leq 0$;
		\item[(ii)] For every $a \in X$, $\dim (\{x \in X\;|\;x>a\}) \leq 0$;
	\end{enumerate}
	Then, $\dim X=0$.
\end{lemma}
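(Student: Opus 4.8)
The plan is to rule out $\dim X = 1$. Since $\dim Z \le 1$ for every $Z \subseteq M^1$ and, for nonempty $X$, monotonicity in $(2)_1$ together with $\dim(\{a\})=0$ forces $\dim X \ge 0$, it suffices to show $\dim X \le 0$ (the empty case being degenerate). Throughout I would treat condition (i); condition (ii) follows by the identical argument with the two off-diagonal ``halves'' $U$ and $U'$ introduced below interchanged. The engine of the proof is a comparison of $\dim(X^2)$ computed in two ways, one of which uses the order symmetry to import the one-sided hypothesis.

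First I would compute $\dim(X^2)$ from the addition property. Applying $(4)_2$ to $X^2$ with the projection $\Pi$ forgetting the last coordinate, every fiber $(X^2)_a^{\Pi}$ equals $X$ and so has dimension $\dim X$; hence the set where the fiber dimension equals $\dim X$ is all of $X$ and the other one is empty, giving $\dim(X^2) = 2\dim X$ (in both relevant cases $\dim X = 0,1$). Next I would decompose by trichotomy into $X^2 = U \cup \Delta \cup U'$, where $U=\{(a,b)\in X^2 : a<b\}$, $U'=\{(a,b)\in X^2: b<a\}$ and $\Delta=\{(a,b)\in X^2: a=b\}$. By $(2)_2$ this yields $\dim(X^2)=\max\{\dim U,\dim\Delta,\dim U'\}$; applying $(4)_2$ to $\Delta$ (all of whose fibers are singletons) gives $\dim\Delta=\dim X$; and since $U'=U^{\text{switch}}$, property $(3)_2$ gives $\dim U = \dim U'$.

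The crucial step is to bound $\dim U'$ using hypothesis (i). Applying $(4)_2$ to $U'$ with $\Pi$ forgetting the last coordinate, the fiber of $U'$ over $a$ is $\{b\in X : b<a\}$, which has dimension $\le 0$ by (i); hence the part of $U'$ whose fibers have dimension one is empty, and $\dim U'=\dim\Pi(U')\le\dim X$ by monotonicity. Combining the two computations gives $2\dim X=\dim(X^2)=\max\{\dim U',\dim X\}=\dim X$, so $\dim X\le 0$, i.e.\ $\dim X=0$. I expect the main obstacle to be conceptual rather than computational: each one-sided condition controls only \emph{one} of the two off-diagonal halves of $X^2$, and one must notice that the coordinate-swap symmetry $(3)_2$ is precisely what propagates that single bound to the whole square. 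A secondary point requiring care is the projection convention (forgetting the \emph{last} coordinate), which is what makes the fibers of $U'$ the ``below'' sets governed by (i).
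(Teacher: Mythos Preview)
Your proposal is correct and follows essentially the same argument as the paper: decompose $X\times X$ into the diagonal and the two strict halves, use hypothesis (i) with $(4)_2$ to bound the dimension of the half $\{(a,b):b<a\}$, transfer this bound to the other half via the coordinate swap $(3)_2$, and compare with $\dim(X\times X)=2\dim X$ to force $\dim X=0$. Your handling of the projection convention and of the possibly strict inclusion $\Pi(U')\subseteq X$ is in fact slightly more careful than the paper's own write-up.
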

\begin{proof}
	We consider the case where condition (i) is satisfied only.
	We can prove the lemma similarly in the case where condition (ii) holds.
	Consider the following definable sets:
	\begin{align*}
		&\Delta:=\{(x,x)\;|\;x \in X\};\\
		&Y_1:=\{(x,y) \in X \times X\;|\; y<x\};\\
		&Y_2:=\{(x,y) \in X \times X\;|\; x<y\}.
	\end{align*}
	Observe that, for every $x \in X$ and the fiber $(Y_1)_x^{\Pi_1^2}:=\{y \in M\;|\; (x,y) \in Y_1\}$, $\dim (Y_1)_x^{\Pi_1^2} =0$ holds by condition (i).
	By Definition \ref{def:dimension}(4), we have $$\dim Y_1 = \dim X + \dim (Y_1)_x^{\Pi_1^2} = \dim X+0=\dim X.$$
	By Definition \ref{def:dimension}(1,4), we have $\dim \Delta = \dim X$.
	We have $\dim Y_2=\dim Y_1=\dim X$ by Definition \ref{def:dimension}(3) because $Y_2$ is obtained from $Y_1$ by permuting the coordinates.
	By Definition \ref{def:dimension}(2), we have $$\dim (X \times X)=\max \{\dim Y_1, \dim \Delta, \dim Y_2\}=\dim X$$ because $X \times X= Y_1 \cup \Delta \cup Y_2$.
	On the other hand, we get $\dim (X \times X) = 2\dim X$ by Definition \ref{def:dimension}(4).
	These imply $\dim X=0$.
\end{proof}

\begin{corollary}\label{cor:unbounded0}
	Suppose $\mathcal M=(M,<,\ldots)$ is an expansion of a linear order.
	A dimension function $\dim: \myDef(\mathcal M) \to \mathbb N \cup \{-\infty\}$ is completely determined by the restriction to the collection $\myDef_{1,\text{bdd}}(\mathcal M)$ of bounded definable subsets of $M$.
	In addition, there exists a definable bounded subset $X$ of $M$ with $\dim (X)=1$.
\end{corollary}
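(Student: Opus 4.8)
The plan is to reduce to univariate sets via Proposition~\ref{prop:dim}(1) and then express the dimension of an arbitrary univariate definable set through its bounded restrictions, the decisive tool being Lemma~\ref{lem:unbounded0}. Concretely, since $\dim$ takes only the values $-\infty,0,1$ on subsets of $M$, I would establish the formula
\[
	\dim X=\max\{\dim(X\cap(a,b))\mid a,b\in M,\ a<b\}
	\qquad(X\in\myDef_1(\mathcal M)),
\]
the right-hand side being a genuine maximum because the set of values occurring is finite. The inequality $\ge$ here is trivial from monotonicity ($(2)_1$), since each $X\cap(a,b)$ is contained in $X$. Combined with Proposition~\ref{prop:dim}(1), which says $\dim$ is determined by its restriction to $\myDef_1(\mathcal M)$, this formula shows that $\dim$ is determined by its values on bounded definable subsets of $M$ (note that emptiness of $X$ is itself detected by the bounded restrictions, which covers the value $-\infty$).

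The heart of the argument is the reverse inequality $\dim X\le\max\{\dim(X\cap(a,b))\}$, which I would prove by contraposition. Assume every bounded piece $X\cap(a,b)$ has dimension $\le 0$ and fix any $c\in M$. Split $X$ into $X^-:=X\cap(-\infty,c)$, the singleton part $X\cap\{c\}$, and $X^+:=X\cap(c,\infty)$; all three are definable. The key observation is that $X^+$ satisfies hypothesis~(i) of Lemma~\ref{lem:unbounded0}: for each $a\in X^+$ one has $\{x\in X^+\mid x<a\}=X\cap(c,a)$, a bounded piece, hence of dimension $\le 0$ by assumption; symmetrically $X^-$ satisfies hypothesis~(ii). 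Lemma~\ref{lem:unbounded0} therefore gives $\dim X^\pm=0$ whenever these sets are nonempty (and $-\infty$ otherwise), so in all cases $\dim X^\pm\le 0$; together with $\dim(X\cap\{c\})\le 0$ and $(2)_1$ this yields $\dim X\le 0$, as required.

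The final assertion is then immediate: applying the displayed formula to $X=M^1$, whose dimension is $1$ by Definition~\ref{def:dimension}(1), forces some bounded open interval $(a,b)$ to satisfy $\dim(a,b)=1$, and $(a,b)$ is a bounded definable set. I expect the only delicate point to be the clean matching of the two tails $X^+$ and $X^-$ to conditions~(i) and~(ii) of Lemma~\ref{lem:unbounded0}, together with the bookkeeping for possibly empty tails, where the conclusion $\dim=0$ of the lemma must be read as $\le 0$; everything else is routine monotonicity.
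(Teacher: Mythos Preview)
Your proof is correct and follows essentially the same approach as the paper: reduce to $\myDef_1(\mathcal M)$ via Proposition~\ref{prop:dim}(1), then show that if all bounded pieces of $X$ have dimension $\le 0$ so does $X$, by feeding half-line portions of $X$ (whose one-sided tails are bounded) into Lemma~\ref{lem:unbounded0}. The only cosmetic difference is that you split $X$ symmetrically at a single point $c$ and apply conditions~(i) and~(ii) to the two halves, whereas the paper handles the bounded-below case first and then bootstraps to the general case; your version is arguably cleaner, and your remark that the lemma's conclusion should be read as $\dim\le 0$ to cover empty tails is well taken.
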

\begin{proof}
	Let $X$ be an unbounded definable subset of $M$.
	If there exists a definable bounded subset $Y$ of $X$ such that $\dim Y=1$, then $\dim X=1$ by Definition \ref{def:dimension}(2).
	We have only to show that $\dim X=0$ if $\dim Y=0$ for every bounded definable subset $Y$ of $X$.
	
	First suppose $X$ is bounded below.
	Then the definable set $\{x \in X\;|\;x<a\}$ is bounded for every $a \in X$.
	We have $\dim (\{x \in X\;|\;x<a\})=0$ by the assumption.
	We have $\dim X=0$ by Lemma \ref{lem:unbounded0}.
	
	Next suppose $X$ is not bounded below.
	We have already proved that $\dim (\{x \in X\;|\;x>a\})=0$ for every $a \in X$ in the previous paragraph.
	We have $\dim X=0$ by Lemma \ref{lem:unbounded0}.
	
	Finally, we prove the `in addition' part.
	If $\dim (X)=0$ for every definable bounded definable subset $X$ of $M$, we have $\dim M=0$, which contradicts Definition \ref{def:dimension}(1).
\end{proof}

We give simple examples with $\myNum(\mathcal M)=0$.

\begin{corollary}\label{cor:non}
	$\myNum((\mathbb N,<))=\myNum((\mathbb Z,<))=0$.
\end{corollary}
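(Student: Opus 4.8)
The plan is to derive a contradiction from the assumption that a dimension function $\dim$ exists on either structure, using the second assertion of Corollary \ref{cor:unbounded0}. That corollary guarantees, for any expansion of a linear order carrying a dimension function, the existence of a bounded definable subset $X \subseteq M$ with $\dim X = 1$. I would show that no such subset can exist in $(\mathbb N,<)$ or $(\mathbb Z,<)$, forcing $\mySet(\mathcal M)=\emptyset$ and hence $\myNum(\mathcal M)=0$.

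The key structural observation is that both orders are discrete, so every bounded subset of $M$ is finite: a subset contained in an interval $[a,b]$ with $a,b \in M$ can meet only the finitely many elements of $M$ lying between $a$ and $b$. In particular every bounded definable subset of $M$ is finite, definability playing no role here. (For $(\mathbb N,<)$ one notes additionally that every subset is automatically bounded below by the least element, so boundedness reduces to being bounded above.)

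It then remains to compute the dimension of a finite set. By condition (1) of Definition \ref{def:dimension} every singleton has dimension $0$, and by $(2)_1$ the dimension of a finite union is the maximum of the dimensions of its members; hence $\dim X = 0$ for every nonempty finite $X \subseteq M$, while $\dim \emptyset = -\infty$. Consequently $\dim X \leq 0$ for every bounded definable subset $X$ of $M$, which contradicts the bounded definable set of dimension $1$ supplied by Corollary \ref{cor:unbounded0}. I expect no genuine obstacle: once Corollary \ref{cor:unbounded0} is in hand, the only input specific to $(\mathbb N,<)$ and $(\mathbb Z,<)$ is the finiteness of bounded sets, which is immediate from discreteness.
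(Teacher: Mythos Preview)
Your proposal is correct and follows essentially the same approach as the paper: both argue that every bounded definable subset of $\mathbb N$ or $\mathbb Z$ is finite, hence has dimension at most $0$ by Definition \ref{def:dimension}(1,2), contradicting the ``in addition'' part of Corollary \ref{cor:unbounded0}. Your version merely spells out the discreteness argument and the induction on the size of finite sets in slightly more detail.
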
 
\begin{proof}
	Every unary definable bounded set is finite.
	If a dimension function $\dim$ on the given structure exists, the values of $\dim$ at the finite sets are zero by Definition \ref{def:dimension}(1,2).
	This contradicts the `in addition' part of Corollary \ref{cor:unbounded0}.
\end{proof}

\begin{lemma}\label{lem:discrete_closed}
	Suppose $\mathcal M=(M,<,\ldots)$ is a definably complete structure.
	Let $\dim:\myDef(\mathcal M) \to \mathbb N \cup \{-\infty\}$ be a dimension function.
	Then, $\dim X=0$ for every definable closed discrete subset $X$ of $M$.
\end{lemma}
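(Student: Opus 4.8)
The plan is to assume $X$ is nonempty (otherwise $\dim X=-\infty$ and there is nothing to prove, and if $X$ is a single point then $\dim X=0$ by Definition \ref{def:dimension}(1)), so we may take $X$ to have at least two elements. The heart of the argument is to exhibit a single definable planar set whose dimension can be computed in two different ways, once through each coordinate projection, with the two computations forced in opposite directions.

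First I would use definable completeness, together with the hypotheses that $X$ is closed and discrete, to construct a successor function on $X$. For $x\in X$ with $\{z\in X:z>x\}$ nonempty, definable completeness supplies $b:=\inf\{z\in X:z>x\}\in M$; since $X$ is closed we get $b\in X$, and since $X$ is discrete the point $x$ is isolated, forcing $b>x$ with the infimum attained, so $b=\min\{z\in X:z>x\}=:s(x)$. Let $D$ be the definable domain of $s$, which is all of $X$ if $X$ has no maximum and otherwise $X$ minus its maximum; as we discard at most one point, Definition \ref{def:dimension}(1),(2) give $\dim D=\dim X$. Dually, for every $y$ with $\{z\in X:z<y\}$ nonempty, the element $\max\{z\in X:z<y\}$ is well defined. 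Verifying that these infima and suprema are attained is the one genuinely structure-dependent point, and it is exactly where definable completeness (with closedness and discreteness) enters.

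Now consider the definable set
\[
G:=\{(x,y)\in M^2 : x\in D,\ x<y<s(x)\}.
\]
Because $\mathcal M$ expands a dense order, each fiber $G_x^{\Pi}$ with $x\in D$ is the nonempty open interval $(x,s(x))$, so $\dim G_x^{\Pi}=1$, while all other fibers are empty. Hence $\Pi(G)=D$, and the addition property $(4)_2$ gives $\dim G=\dim D+1=\dim X+1$. On the other hand I would read $G$ through its second coordinate: for fixed $y$, if $(x,y)\in G$ then necessarily $x=\max\{z\in X:z<y\}$, since two candidates $x_1<x_2$ in $X$ below $y$ would force $s(x_1)\le x_2<y$, contradicting $y<s(x_1)$. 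Thus every vertical fiber of $G$ is a singleton or empty, so the projection of the coordinate-swap $G^{\text{switch}}$ onto its first coordinate is injective; by Definition \ref{def:dimension}(3) and Proposition \ref{prop:dim}(2) this yields $\dim G=\dim G^{\text{switch}}\le 1$.

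Combining the two computations gives $\dim X+1\le 1$, whence $\dim X\le 0$; since $X\neq\emptyset$ forces $\dim X\ge 0$ by monotonicity, we conclude $\dim X=0$. The main obstacle is pinpointing the auxiliary set $G$ so that the addition property (applied over the first coordinate) and coordinate-swap invariance (applied over the second) constrain $\dim G$ from below and above respectively; once $G$ is chosen the rest is routine, and no appeal to Lemma \ref{lem:unbounded0} is needed, although one could alternatively deduce the result for unbounded $X$ from the bounded case via that lemma.
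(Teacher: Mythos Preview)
Your construction of the successor $s$ and the planar set $G$ is nice, and the computation through the second coordinate is correct: $G^{\text{switch}}$ is a graph, so $\dim G=\dim G^{\text{switch}}\le 1$. The gap is in the other direction. You assert that each fiber $G_x^\Pi=(x,s(x))$ has dimension $1$ simply because it is a nonempty open interval, but nothing in Definition~\ref{def:dimension} forces a nonempty open interval to have dimension $1$. That fact is established in the paper only as Lemma~\ref{lem:interval1}, and its proof genuinely uses that $\mathcal M$ expands an ordered \emph{group}; under the bare hypothesis of definable completeness it can fail. Indeed, the definably complete (in fact o-minimal) structures of Proposition~\ref{prop:omin_finite_example} and Proposition~\ref{prop:DLO} carry dimension functions that assign dimension $0$ to many open intervals. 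With your argument as written, the addition property only gives $\dim G(1)+1\le\dim G\le 1$ for $G(1)=\{x\in D:\dim(x,s(x))=1\}$, hence $\dim G(1)\le 0$; it says nothing about $G(0)=D\setminus G(1)$, so you cannot conclude $\dim D\le 0$ without the unjustified step $G(1)=D$.

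The paper's proof sidesteps this issue entirely. Instead of comparing two fiberings of a planar set, it runs a supremum argument internal to $X$: after reducing to $X\subseteq(a,\infty)$, one considers the definable set $Y=\{x\in X:\dim(X\cap(a,x])\le 0\}$, observes that $\min X\in Y$, and then uses your successor $s$ together with $(2)_1$ to show that if $d=\sup Y$ lay in $X$ but below $\sup X$, then $s(d)\in Y$ as well, a contradiction. The remaining unbounded case is handled by Lemma~\ref{lem:unbounded0}. This uses only that finite unions of dimension-$0$ sets have dimension $0$, never that intervals have dimension $1$, which is why the group hypothesis is not needed here.
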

\begin{proof}
	Let $X$ be a definable closed discrete subset of $M$.
	If $X$ is finite, $\dim X=0$ by Definition \ref{def:dimension}(1,2).
	We assume that $X$ is infinite.
	
	Take an arbitrary point $a \in X$.
	By Definition \ref{def:dimension}(1,2), if $\dim (X \cap (a,\infty))=0$ and $\dim(X \cap (-\infty,a))=0$, then $\dim X=0$.
	We only prove $\dim (X \cap (a,\infty))=0$ because the proof of the equality $\dim(X \cap (-\infty,a))=0$ is similar.
	
	By considering $X \cap (a,\infty)$ in place of $X$, we assume $X \subseteq (a,\infty)$ for the simplicity of notations.
	Let $$Y:=\{x \in X\;|\; \dim (X \cap (a,x]) \leq 0\},$$ which is a definable set by Definition \ref{def:dimension}(4).
	Let $c:=\inf X \in M$.
	Since $X$ is closed and discrete, we have $c \in X$.
	We have $c \in Y$ because $X \cap (0,c]=\{c\}$.
	Let $d:=\sup Y$.
	Since $Y$ is a subset of $X$, $Y$ is also discrete and closed.
	In particular, we have either $d \in Y$ or $d=\infty$.
	
	First suppose $d \in Y$.
	If $d=\sup X$, then we have $\dim X=0$ by the definition of $Y$.
	Assume for contradiction that $d<\sup X$.
	Then $Z:=\{x \in X\;|\; x>d\}$ is a nonempty definable subset of $X$.
	$Z$ is closed and discrete because $X$ is so.
	Therefore, we have $e:=\inf Z \in Z$.
	By the definition of $e$, we have $X \cap (a,e] = (X \cap (a,d]) \cup \{e\}$.
	We get $\dim (X \cap (a,e])=\max\{\dim (X \cap (a,d]) , \dim \{e\}\}=0$ by Definition \ref{def:dimension}(1,2).
	This means $e \in Y$, which contradicts the definitions of $d$ and $e$.
	
	Suppose $d=\infty$.
	Then, $X$ satisfies condition (i) of Lemma \ref{lem:unbounded0}.
	Lemma \ref{lem:unbounded0} implies $\dim X=0$.
\end{proof}

\begin{lemma}\label{lem:interval1}
	Suppose $\mathcal M=(M,<,+,0,\ldots)$ is a definably complete expansion of an ordered group.
	Let $\dim:\myDef(\mathcal M) \to \mathbb N \cup \{-\infty\}$ be a dimension function.
	We have $\dim X=1$ for every definable subset $X$ of $M$ having nonempty interior.
\end{lemma}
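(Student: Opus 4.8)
The plan is to reduce everything to computing the dimension of open intervals. Since $X$ has nonempty interior it contains some open interval $(a,b)$ with $a<b$, and since $X\subseteq M$ we have $\dim X\le 1$; hence it suffices to prove that $\dim(a,b)=1$ for every open interval, for then the monotonicity contained in Definition \ref{def:dimension}$(2)_1$ forces $\dim X=1$. Two structural facts drive the argument. First, for each $t\in M$ the translation $y\mapsto y+t$ is a definable bijection of $M$, so Proposition \ref{prop:dim}(2) makes the dimension translation invariant: $\dim(Y+t)=\dim Y$ for every definable $Y\subseteq M$. Second, by Corollary \ref{cor:unbounded0} there is a bounded definable set $B\subseteq M$ with $\dim B=1$.

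To locate a point near which $B$ is one-dimensional at every scale, I would study the monotone function $h(x):=\dim(B\cap(-\infty,x))$, which is nondecreasing since $B\cap(-\infty,x_1)\subseteq B\cap(-\infty,x_2)$ for $x_1<x_2$. Setting $W:=\{(x,y)\in M^2 : y\in B,\ y<x\}$, the fibre $W^{\Pi}_x$ equals $B\cap(-\infty,x)$, so Definition \ref{def:dimension}$(4)_2$ guarantees that $\{x : h(x)=1\}$ is definable; its complement $\{x : h(x)\le 0\}$ is an initial segment (by monotonicity of $h$) bounded above by $\sup B\in M$, so by definable completeness $c:=\sup\{x : h(x)\le 0\}$ exists in $M$. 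Then $h(x)\le 0$ for $x<c$ and $h(x)=1$ for $x>c$. Fixing any $\delta>0$ and decomposing $B\cap(-\infty,c+\delta)=(B\cap(-\infty,c-\delta])\cup(B\cap(c-\delta,c+\delta))$, the first piece has dimension $\le 0$ while the union has dimension $1$, so $(2)_1$ yields $\dim(B\cap(c-\delta,c+\delta))=1$; monotonicity then gives $\dim(c-\delta,c+\delta)=1$ for every $\delta>0$.

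Finally I would propagate this to an arbitrary interval $(a,b)$ using translation invariance together with density of the order. Choosing an interior point $p\in(a,b)$ and some $\delta$ with $0<\delta<\min\{p-a,\ b-p\}$ (both possible since the order is dense), the interval $(p-\delta,p+\delta)$ lies inside $(a,b)$ and equals the translate $(c-\delta,c+\delta)+(p-c)$, hence has dimension $1$; monotonicity then gives $\dim(a,b)=1$, completing the reduction. I expect the main obstacle to be precisely the difficulty this scheme is designed to avoid: because the group need not be Archimedean, one cannot cover a fixed bounded one-dimensional set by finitely many translates of a short interval, so a naive finite-additivity argument breaks down. Extracting a single threshold point $c$ at which $B$ is one-dimensional on every arbitrarily small neighbourhood, and only afterwards transporting by translation, is what circumvents the non-Archimedean difficulty; some care is also needed to justify definability of the sets involved through $(4)_2$ and to confirm that $c$ is a genuine element of $M$ rather than $\pm\infty$.
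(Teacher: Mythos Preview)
Your argument is correct and takes a genuinely different route from the paper's. The paper proceeds by contradiction: assuming some interval has dimension~$0$, it uses divisibility of the group (via Miller's result) to show that the set $\{x>0:\dim((0,x))=0\}$ is unbounded above, then invokes Lemma~\ref{lem:unbounded0} to conclude $\dim((0,\infty))=0$ and hence $\dim M=0$, a contradiction. Your approach is direct: starting from the bounded one-dimensional set supplied by Corollary~\ref{cor:unbounded0}, you use definable completeness to locate a threshold point~$c$ at which the dimension of an initial segment of~$B$ jumps, deduce that every symmetric interval about~$c$ has dimension~$1$, and then translate. The chief advantage of your route is that it uses only the additive structure (translation invariance) and the density of the order, never invoking divisibility or Lemma~\ref{lem:unbounded0}; the paper's route, by contrast, leans on the scaling $x\mapsto 3x/4$ to push the supremum to infinity. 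One small point worth making explicit: in your final step you should take the initial interval $(a,b)$ to be bounded (with $a,b\in M$), so that $p-a$ and $b-p$ are meaningful; this is harmless since any set with nonempty interior contains such an interval.
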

\begin{proof}
	Assume for contradiction that there exists a definable set $X$ such that $\dim X=0$ and it has a nonempty interior.
	There exists a nonempty bounded open interval $I=(a,b)$ contained in $X$.
	We have $0 \leq \dim I \leq \dim X=0$ by Definition \ref{def:dimension}(2).
	By shifting $I$, we may assume that $a=0$ by Proposition \ref{prop:dim}(2).
	We show $\dim ((0,\infty))=0$.
	
	Consider the sets 
	\begin{align*}
		&Y:=\{(x,y) \in M^2\;|\;0<y<x\}\text{ and }\\
		&Z:=\{x \in M\;|\; \dim Y_x^{\Pi_1^2}=0\}.
	\end{align*}
	We show $Z=(0,\infty)$.
	Since we have $\dim ((0,b)) = 0$, we have $b \in Z$.
	In particular, $Z$ is not empty.
	We have $\sup Z \in M \cup \{\infty\}$ because $\mathcal M$ is definably complete.
	Assume for contradiction that $c:=\sup Z \in M$.
	By Definition \ref{def:dimension}(2), we have $x \in Z$ whenever there exists $y \in Z$ such that $0<x<y$.
	Recall that $(M,+,0)$ is divisible Abelian group by \cite[Proposition 2.2]{M}.
	We have $3c/4 \in Z$ and $3c/2 \notin Z$.
	On the other hand, the interval $(3c/4,3c/2)$ is definably bijective to $(0,3c/4)$, and $\dim ((3c/4,3c/2)) = \dim ((0,3c/4))=0$ by Proposition \ref{prop:dim}(2).
	We have $$\dim ((0,3c/2))=\max\{\dim ((0,3c/4)), \dim(\{3c/4\}), \dim ((3c/4,3c/2))\}=0$$ by Definition \ref{def:dimension}(2).
	This means $3c/2 \in Z$, which contradicts the definition of $c$.
	We have shown $\sup Z=\infty$.
	This implies $Z=(0,\infty)$.
	
	For every $a \in (0,\infty)$, we have $\dim ((0,\infty) \cap (-\infty,a)) =0$ because $\sup Z=\infty$.
	We have $\dim (0,\infty) = 0$ by Lemma \ref{lem:unbounded0}.
	Since $M= (0,\infty) \cup \{0\} \cup (-\infty,0)$ and $(-\infty,0)$ is definably bijective to $(0,\infty)$, we have $\dim M=0$ by Definition \ref{def:dimension}(1,2) and Proposition \ref{prop:dim}(2), which contradicts the equality $\dim M=1$ in Definition \ref{def:dimension}(1).
\end{proof}

We recall the Cantor-Bendixson rank and its basic property.

\begin{definition}[\cite{FM}]\label{def:lpt}
	We denote the set of isolated points in $S$ by $\myIso(S)$ for any topological space $S$.
	We set $\myLpt(S):=S \setminus \myIso(S)$.
	
	Let $X$ be a nonempty closed subset of a topological space $S$.
	We set $X[ 0 ]=X$ and, for any $m>0$, we set $X [ m ] = \myLpt(X [ m-1 ])$.
	We say that $\myrk(X)=m$ if $X [ m ]=\emptyset$ and $X[ m-1 ] \neq \emptyset$.
	We say that $\myrk X = \infty$ when $X [ m ] \neq \emptyset$ for every natural number $m$.
\end{definition}

\begin{lemma}\label{lem:very_basic}
	Let $\mathcal M=(M,<)$ be an expansion of a dense linear order without endpoints.
	For a definable closed subset $A$ of $M$ with empty interior, $\myrk(A)=k$ if and only if $k$ is the least number of discrete sets whose union is $A$.
\end{lemma}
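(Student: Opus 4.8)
The plan is to pin down the least number $\ell$ of discrete sets whose union is $A$ by proving the two inequalities $\ell \le \myrk(A)$ and $\myrk(A) \le \ell$; the claimed equivalence is then immediate, since both ``$\myrk(A)=k$'' and ``$\ell=k$'' say the same thing once $\ell=\myrk(A)$. Throughout write $A[0]=A$ and $A[i]=\myLpt(A[i-1])$; as $A$ is closed, every $A[i]$ is closed, and $A[i-1]\setminus A[i]=\myIso(A[i-1])$.

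For the inequality $\ell \le \myrk(A)$ I would use the canonical Cantor--Bendixson decomposition. Put $k=\myrk(A)$ and $D_i:=A[i-1]\setminus A[i]=\myIso(A[i-1])$ for $1\le i\le k$. The isolated points of any set form a discrete subspace, so each $D_i$ is discrete, and since $A[k]=\emptyset$ the telescoping union gives $A=A[0]\setminus A[k]=\bigcup_{i=1}^{k} D_i$. Thus $A$ is covered by $k$ discrete sets, so $\ell\le k$. I would also remark that $\myIso$ is definable in an expansion of a dense linear order, so each $D_i$ is definable; hence this bound holds whether ``discrete set'' is read as definable or arbitrary.

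For the reverse inequality $\myrk(A)\le \ell$, suppose $A=\bigcup_{j=1}^{m}E_j$ with each $E_j$ discrete; the goal is $A[m]=\emptyset$. For $x\in A$ set $L(x):=\{\, j : x \text{ is a limit point of } E_j \,\}$. The heart of the argument is the \textbf{Central Claim}: if $x\in A[r]$ then $|L(x)|\ge r$. Granting it, take any $x\in A[m]$; then $|L(x)|\ge m$, but $x\in E_{j_0}$ for some $j_0$, and discreteness of $E_{j_0}$ forces $j_0\notin L(x)$, so $|L(x)|\le m-1<m$, a contradiction. Hence $A[m]=\emptyset$, i.e.\ $\myrk(A)\le m$; taking $m=\ell$ gives $\myrk(A)\le\ell$, and combined with the previous paragraph $\ell=\myrk(A)$.

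The Central Claim is proved by induction on $r$, and this step is where I expect the only real difficulty. The base case $r=0$ is trivial. For the step, let $x\in A[r+1]=\myLpt(A[r])$, so $x$ is a limit point of $A[r]$. The key device is to choose an open interval $U\ni x$ so small that $U\cap E_j\subseteq\{x\}$ for every color $j\notin L(x)$; this is possible because there are finitely many such $j$ and each satisfies $x\notin\overline{E_j\setminus\{x\}}$. Two consequences follow: (a) every $y\in U\setminus\{x\}$ has $L(y)\subseteq L(x)$, since a small interval about $y$ contained in $U$ and avoiding $x$ misses each $E_j$ with $j\notin L(x)$; and (b) $(A\cap U)\setminus\{x\}\subseteq\bigcup_{j\in L(x)}E_j$. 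Now pick $y\in A[r]\cap(U\setminus\{x\})$, which is nonempty because $x$ is a limit point of $A[r]$. By the induction hypothesis $|L(y)|\ge r$, by (a) $L(y)\subseteq L(x)$, and by (b) $y\in E_{j_0}$ for some $j_0\in L(x)$, whence $j_0\notin L(y)$ by discreteness. Therefore $L(y)\subsetneq L(x)$ and $|L(x)|\ge|L(y)|+1\ge r+1$, completing the induction. The subtle part is exactly the production of the extra color $j_0\in L(x)\setminus L(y)$: a naive induction only reproduces $|L(x)|\ge r$, and the gain of one is exactly what the localization $U$ supplies, since it simultaneously confines $L(y)$ inside $L(x)$ and traps the approximating point $y$ in a color of $L(x)$ of which $y$ itself is not a limit. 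One must also avoid sequential reasoning here, as the order topology need not be first countable, so every accumulation argument is phrased purely through neighborhoods.
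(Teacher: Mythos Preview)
Your argument is correct. The first inequality is the standard Cantor--Bendixson layering, and your Central Claim for the second inequality is a clean pigeonhole-style induction: the localization $U$ is exactly what is needed, and your remark about avoiding sequential reasoning is apt. One small comment: the hypotheses that $A$ be closed and have empty interior are not actually used in your argument (closedness is only needed so that $\myrk(A)$ is defined, and empty interior merely prevents $\myrk(A)=\infty$, in which case the equivalence is vacuous since your second inequality shows $A$ cannot then be a finite union of discrete sets).

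As for comparison with the paper: there is nothing to compare. The paper does not prove this lemma; it simply cites \cite[1.3]{FM}, where the result is stated for subsets of $\mathbb{R}$, and asserts that the same argument works in any dense linear order without endpoints. Your write-up is therefore strictly more than what the paper provides, and the ``color set'' $L(x)$ device you use is a transparent way to organize the lower bound that makes no appeal to properties special to $\mathbb{R}$.
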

\begin{proof}
	See \cite[1.3]{FM}.
	The lemma is proven in \cite[1.3]{FM} when the underlying space is the set of reals.
	We can prove the lemma in the same manner as it.
	We omit the details.
\end{proof}

\begin{lemma}\label{lem:CBcase}
	Let $\mathcal M=(M,<,+,0,\ldots)$ and $\dim:\myDef(\mathcal M) \to \mathbb N \cup \{-\infty\}$ be as in Lemma \ref{lem:interval1}.
	Suppose that there exists a definable non-closed discrete subset $Z$ of $M$ such that the closure $\mycl(Z)$ of $Z$ is a union of finitely many definable discrete sets. 
	The equality $\dim X=0$ holds for every definable discrete subset $X$ of $M$.
\end{lemma}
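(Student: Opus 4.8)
The plan is to reduce the statement to two engines and then run a Cantor--Bendixson induction. By Corollary \ref{cor:unbounded0} it suffices to prove $\dim X=0$ for \emph{bounded} definable discrete $X$, so I assume throughout that $X$, hence $\mycl(X)$, is bounded. I first record the \emph{single-endpoint fact}: if $W$ is a definable discrete set contained in an interval $(c,d)$ whose only limit point is $c$ (so $c\notin W$ and $W$ does not accumulate at $d$), then $\dim W=0$. Indeed, for every $a\in W$ the set $\{x\in W\mid x>a\}=W\cap(a,d)$ has no limit point in $[a,d]$, so it is closed and discrete and has dimension $0$ by Lemma \ref{lem:discrete_closed}; Lemma \ref{lem:unbounded0}(ii) then yields $\dim W=0$. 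The mirror statement, for $W$ accumulating only at the right endpoint $d$, follows the same way via Lemma \ref{lem:unbounded0}(i).

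The central step is the reduction to limit points: \emph{if $X$ is discrete and its limit set $L:=\mycl(X)\setminus X$ has $\dim L=0$, then $\dim X=0$.} For discrete $X$ one checks, in the Hausdorff order topology, that a point of $X$ which is isolated in $X$ remains isolated in $\mycl(X)$, so $\myLpt(\mycl(X))=\mycl(X)\setminus X=L$ and $X=\myIso(\mycl(X))$; in particular $L$ is closed. Using definable completeness I would send $x\in X\subseteq M\setminus L$ to the right endpoint $g(x):=\inf\bigl(L\cap(x,\infty)\bigr)$ of the component of $M\setminus L$ containing it; this $g\colon X\to L$ is definable, after discarding the points of $X$ above $\sup L$, which already form a closed discrete set of dimension $0$ by Lemma \ref{lem:discrete_closed}. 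Each nonempty fiber $g^{-1}(\beta)=X\cap(\alpha,\beta)$, with $\alpha=\sup\bigl(L\cap(-\infty,\beta)\bigr)$, accumulates only at $\alpha$ and $\beta$; splitting it at the midpoint (available by divisibility), the two halves each accumulate at a single endpoint, so the single-endpoint fact gives dimension $0$ for each half and hence for the whole fiber. Proposition \ref{prop:dim}(3) applied to $g$ then gives $B(0)=g(X)$ and $g^{-1}(B(0))=X$, whence $\dim X=\dim g(X)\le\dim L=0$.

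It remains to establish $\dim L=0$, and this I would obtain from the more general claim that every definable closed set $C$ of \emph{finite} Cantor--Bendixson rank has $\dim C=0$, arguing by induction on $\myrk(C)$. When $\myrk(C)=1$ the set $C$ is closed and discrete, so Lemma \ref{lem:discrete_closed} applies. When $\myrk(C)=k>1$, write $C=\myIso(C)\sqcup C[1]$; here $C[1]$ is closed of rank $k-1$, so $\dim C[1]=0$ by induction, and the discrete set $\myIso(C)$ has its limit points inside $C[1]$, hence of dimension $0$, so the central step gives $\dim\myIso(C)=0$ and therefore $\dim C=0$. Taking $C=\mycl(X)$ then finishes the proof, since $\dim X\le\dim\mycl(X)=0$. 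This is where the hypothesis is invoked: it furnishes a non-closed discrete $Z$ with $\mycl(Z)$ a finite union of definable discrete sets, which has empty interior because a finite union of nowhere dense sets is nowhere dense in a definably complete (definably Baire) structure, so Lemma \ref{lem:very_basic} supplies the finite Cantor--Bendixson rank that grounds the induction.

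The main obstacle is precisely this grounding. The hypothesis delivers finite rank only for the single witness $\mycl(Z)$, whereas the induction needs finite rank for the closure (and iterated limit sets) of an \emph{arbitrary} discrete $X$; equivalently, one must rule out definable closed sets of empty interior and infinite rank, i.e.\ perfect-type pieces, on which the decomposition $C=\myIso(C)\sqcup C[1]$ degenerates and the central step says nothing. The delicate part of the argument is therefore to use the existence of $Z$, together with the ordered-group structure, divisibility, and definable completeness, to show that in this situation every definable discrete set has a closure that is a finite union of discrete sets; once that uniform finiteness is secured, the two engines above close the argument with only routine bookkeeping, the one genuinely structural input being the definability of $g$ through $\inf\bigl(L\cap(x,\infty)\bigr)$.
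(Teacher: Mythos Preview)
You have correctly identified the gap in your own argument, and it is fatal for this approach: the hypothesis gives finite Cantor--Bendixson rank only for $\mycl(Z)$, and nothing in the setup (definably complete expansion of an ordered group) lets you transfer this to $\mycl(X)$ for an arbitrary definable discrete $X$. Your induction on $\myrk(\mycl(X))$ simply never gets off the ground unless you can rule out perfect pieces in $\mycl(X)$, and the lemma's hypotheses do not do that.

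The paper's proof avoids this obstacle entirely by never looking at $\mycl(X)$. Instead it uses $Z$ once, to manufacture a single definable discrete set $P\subseteq(0,\infty)$ whose only limit point is $0$; this comes from the Cantor--Bendixson decomposition of $\mycl(Z)$ (which \emph{does} have finite rank by Lemma~\ref{lem:very_basic}), followed by a shift and restriction. Your single-endpoint fact gives $\dim P=0$. Now for an arbitrary definable discrete $X$, set
\[
X\langle r\rangle:=\{x\in X\;|\;\forall x'\in X\ (x'=x)\vee(|x'-x|>r)\},
\]
which is closed and discrete, hence of dimension $0$ by Lemma~\ref{lem:discrete_closed}. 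Since $X$ is discrete, $X=\bigcup_{r>0}X\langle r\rangle$, and because $P$ accumulates at $0$ this equals $\bigcup_{r\in P}X\langle r\rangle$. The definable set $T:=\{(r,y)\in P\times M\;|\;y\in X\langle r\rangle\}$ has all fibres over $P$ of dimension $0$, so $\dim T=\dim P=0$ by $(4)_2$; projecting onto the second coordinate gives $\dim X\le\dim T=0$ by Proposition~\ref{prop:dim}(2).

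The conceptual difference is that the paper treats the witness $Z$ as a source for a \emph{parameter set} $P$ approaching $0$, and then approximates $X$ from inside by the closed discrete sets $X\langle r\rangle$ indexed by $r\in P$. This needs nothing about the complexity of $\mycl(X)$; the only structural input beyond what you already used is the group operation, to form $|x'-x|$ and to shift $Z$. Your ``central step'' and the CB induction are correct as far as they go, and would give an alternative proof under a d-minimality hypothesis, but they do not suffice here.
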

\begin{proof}
	First, we construct a definable subset $P$ of $(0,\infty)$ such that 
	\begin{itemize}
		\item $P$ is discrete;
		\item $\mycl(P)=P \cup \{0\}$;
	\end{itemize}
%
	Put $Y_1=Z$ and $C_1:=\mycl(Y_1)$.
	If $C_1$ is discrete, then the subset $Y_1$ of $C_1$ is also discrete and closed, which is absurd.
	Therefore, $C_1$ is not discrete.
	This means $r:=\myrk(C_1) > 1$.
	Let $Y_k := \myIso(C_{k-1})$ and $C_k := C_{k-1} \setminus Y_k=\myLpt(C_{k-1})$ for $1<k \leq r$.
	By the definition of the Cantor-Bendixson rank, 
	\begin{itemize}
		\item $Y_{r-1}$ is not closed;
		\item $Y_{r}$ is closed;
		\item $\partial Y_{r-1} \subseteq Y_r$,
	\end{itemize}
	where $\partial Y_{r-1}$ is the frontier of $Y_{r-1}$.
	Put $Q:=Y_{r-1}$, then 
	\begin{itemize}
		\item $Q$ is discrete;
		\item $\partial Q$ is nonempty, discrete and closed.
	\end{itemize}
	By shifting $Q$, we may assume that $0 \in \partial Q$.
	Since $\partial Q$ is discrete and closed, we can choose $c>0$ such that $\partial Q \cap [-c,c] = \{0\}$.
	We may assume that $\partial Q = \{0\}$ by considering $Q \cap (-c,c)$ instead of $Q$.
	At least one of $P:=\{x \in Q\;|\; x>0\}$ and $Q \setminus P$ is not closed.
	We may assume that $P$ is not closed by considering $-Q$ instead of $Q$ if necessarily.
	This $P$ is the desired definable set.
	
	We show $\dim P=0$.
	For every $a \in P$, $P \cap (a,\infty)$ is discrete and closed.
	Therefore, we have $\dim (P \cap (a,\infty)) = 0$ by Lemma \ref{lem:discrete_closed}.
	We get $\dim P=0$ by Lemma \ref{lem:unbounded0}.
	
	Let $X$ be a definable discrete set.
	Put $$X \langle r \rangle :=\{x \in X\;|\; \forall x' \in X \ (x'=x) \vee (|x'-x|>r)\}$$ for $r>0$.
	Observe that $X \langle r \rangle$ is discrete and closed.
	We have $\dim X \langle r \rangle =0$ by Lemma \ref{lem:discrete_closed}.
	Observe $X \langle r \rangle \subseteq X \langle r' \rangle$ whenever $r>r'>0$.
	Since $X$ is discrete, we have $$X=\bigcup_{r>0} X\langle r \rangle.$$
	We have $$X=\bigcup_{r \in P}X\langle r \rangle$$ because $P \subseteq (0,\infty)$ and $0 \in \partial P$
	
	Consider the definable set $$T:=\{(r,y) \in P \times M\;|\; y \in X \langle r \rangle\}.$$
	For every $r \in P$, we have $\dim T_r^{\Pi_1^2} = \dim X\langle r \rangle = 0$.
	Therefore, by Definition \ref{def:dimension}(4), we get $\dim T=0$.
	Let $\pi:M^2 \to M$ be the coordinate projection onto the second coordinate.
	We have $\pi(T)=\bigcup_{r \in P}X\langle r \rangle=X$.
	By Proposition \ref{prop:dim}(2), we get $0 \leq \dim X= \dim \pi(T) \leq \dim T=0$.
\end{proof}

\begin{theorem}\label{thm:unique}
	The inequality $\myNum(\mathcal M) \leq 1$ holds for the following structures $\mathcal M$:
	\begin{enumerate}
		\item[(a)] $\mathcal M$ is a d-minimal expansion of an ordered group.
		\item[(b)] $\mathcal M$ is a weakly o-minimal expansion of an ordered field.
	\end{enumerate}
\end{theorem}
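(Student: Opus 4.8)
The plan is to invoke Proposition \ref{prop:dim}(1), which reduces the entire problem to pinning down the value $\dim X$ for every definable subset $X$ of $M$: once the restriction of an arbitrary dimension function to $\myDef_1(\mathcal M)$ is forced, uniqueness on all of $\myDef(\mathcal M)$ follows, giving $\myNum(\mathcal M)\le 1$. In both cases the target is the same trichotomy: $\dim X=-\infty$ when $X=\emptyset$, $\dim X=0$ when $X$ is nonempty with empty interior, and $\dim X=1$ when $X$ has nonempty interior. The empty case is Definition \ref{def:dimension}(1), and $\dim(\{a\})=0$ together with Definition \ref{def:dimension}(2) handles all finite sets, so the real work is to compute $\dim$ on infinite sets and to separate the ``interior'' and ``no interior'' regimes using the structural hypotheses of each case.

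For case (b) I would argue directly from the field structure. Weak o-minimality says every definable $X\subseteq M$ is a finite union of convex sets; if $X$ is infinite, one of these convex pieces contains two points and hence an open interval, so $X$ has nonempty interior. It therefore suffices to show $\dim I=1$ for a bounded open interval $I=(a,b)$. An ordered field supplies the definable bijection $x\mapsto (x-a)/(b-x)$ from $(a,b)$ onto $(0,\infty)$ and the bijection $x\mapsto -x$ from $(-\infty,0)$ onto $(0,\infty)$; since $M=(-\infty,0)\cup\{0\}\cup(0,\infty)$, the assumption $\dim I=0$ would force $\dim(0,\infty)=0$ by Proposition \ref{prop:dim}(2), hence $\dim M=0$ by Definition \ref{def:dimension}(2), contradicting $\dim M=1$. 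Thus $\dim I=1$ and the trichotomy is established. Notably this avoids any appeal to definable completeness, which matters because a weakly o-minimal structure need not be definably complete.

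For case (a) I would first record that a d-minimal structure is definably complete and that a definably complete expansion of an ordered group is a divisible ordered Abelian group by \cite[Proposition 2.2]{M}, so $\mathcal M$ satisfies the hypotheses of Lemmas \ref{lem:interval1} and \ref{lem:CBcase}. By d-minimality every definable $X\subseteq M$ is a union of an open set and finitely many definable discrete sets. If $X$ has nonempty interior, then $\dim X=1$ by Lemma \ref{lem:interval1}. If $X$ has empty interior, the open summand must be empty, so $X$ is a finite union of definable discrete sets and, by Definition \ref{def:dimension}(2), it suffices to show $\dim D=0$ for every definable discrete $D$. Here I split into two subcases: if every definable discrete subset of $M$ is closed, Lemma \ref{lem:discrete_closed} gives $\dim D=0$ at once; otherwise there is a definable non-closed discrete set $Z$, and the key observation is that the closure of a discrete subset of a dense order has empty interior, since a discrete set cannot be dense in any open interval. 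Hence $\mycl(Z)$ is a definable set with empty interior and therefore, again by d-minimality, a finite union of definable discrete sets, which is precisely the hypothesis of Lemma \ref{lem:CBcase}; that lemma then delivers $\dim D=0$ for all definable discrete $D$. Combining the subcases determines $\dim$ on $\myDef_1(\mathcal M)$.

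The main obstacle I anticipate is exactly this last subcase analysis in (a): controlling the dimension of discrete sets that fail to be closed, for which Lemma \ref{lem:discrete_closed} is unavailable. The crux is certifying the hypothesis of Lemma \ref{lem:CBcase}, i.e. verifying that $\mycl(Z)$ decomposes into finitely many definable discrete sets, and this in turn rests on the small but essential point that $\mycl(Z)$ has empty interior, after which d-minimality does the remaining work. By contrast case (b) is comparatively soft once one exploits that an ordered field provides enough definable bijections to collapse a bounded interval onto all of $M$.
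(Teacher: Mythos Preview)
Your proof is correct and follows essentially the same strategy as the paper: reduce via Proposition~\ref{prop:dim}(1) to $\myDef_1(\mathcal M)$, use Lemma~\ref{lem:interval1} for the nonempty-interior direction in (a), split the empty-interior direction into the closed/non-closed discrete subcases via Lemmas~\ref{lem:discrete_closed} and~\ref{lem:CBcase}, and in (b) use a field-definable bijection from a bounded interval to a half-line. You are in fact slightly more careful than the paper in one place---you explicitly argue that $\mycl(Z)$ has empty interior so that d-minimality supplies the finite discrete decomposition needed for Lemma~\ref{lem:CBcase}, a verification the paper leaves implicit---and your explicit map $x\mapsto(x-a)/(b-x)$ makes concrete what the paper dismisses as ``easy to construct.''
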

\begin{proof}
	Let $M$ be the universe of $\mathcal M$.
	
	We first treat the case in which $\mathcal M$ is a d-minimal expansion of an ordered group.
	Let $\dim$ be a dimension function.
	We show that $\dim$ coincides with the topological dimension.
	For that purpose, by Proposition \ref{prop:dim}(1), we have only to show that, for every nonempty definable subset $X$ of $M$, $\dim X=1$ if and only if $X$ has a nonempty interior.
	The `if' part follows from Lemma \ref{lem:interval1}.
	
	We prove the `only if' part.
	Let $X$ be a definable subset with an empty interior.
	By the definition of d-minimality, $X$ is a union of finitely many definable discrete sets.
	We consider two separate cases.
	Suppose that every definable discrete subset of $M$ is closed.
	We get $\dim X=0$ by Lemma \ref{lem:discrete_closed}.
	Next suppose that there exists a definable non-closed discrete subset of $M$.
	There exist definable discrete subsets $C_1,\ldots C_k$ of $M$ such that $X=\bigcup_{i=1}^k C_i$.
	We have $\dim C_i = 0$ for each $1 \leq i \leq k$ by Lemma \ref{lem:CBcase}.
	We get $\dim X=0$ by Definition \ref{def:dimension}(2).
	
	Next we treat the case where $\mathcal M$ is a weakly o-minimal expansion of an ordered field.
	Let $\dim$ be a dimension function.
	Let $C$ be a convex open definable subset of $M$.
	$C$ contains a nonempty open interval $I$.
	It is easy to construct a definable bijection between $I$ and $M$.
	We have $1 \geq \dim(C) \geq \dim(I)=\dim(M)=1$ by Definition \ref{def:dimension} and Proposition \ref{prop:dim}(2).
	Let $X$ be a definable subset of $M$.
	Since $X$ is a union of a finite set and finitely many open convex set, we have $\dim X=1$ if and only if $X$ has a nonempty interior.
	This means that $\dim$ coincides with the topological dimension on $\myDef_1(\mathcal M)$.
	These two coincide with each other on $\myDef(\mathcal M)$ by Proposition \ref{prop:dim}(1).
\end{proof}

\subsection{O-minimal case}
In this subsection, we suppose $\mathcal M$ is o-minimal. 
Our main target of this subsection is Theorem \ref{thm:omin_finite}.
For its proof, we introduce several notions which is useful for the study of dimension functions in o-minimal structures.
First, we construct a new function from $\myDef(\mathcal M)$ into $\mathbb N \cup \{-\infty\}$ from a definable subset of $M$ and another function having the same domain and codomain.

\begin{definition}\label{def:new_dim}
	Let $\mathcal M$ be a structure and $\dim:\myDef(\mathcal M) \to \mathbb N \cup \{-\infty\}$ be a function satisfying conditions (1), $(2)_1$ in Definition \ref{def:dimension} and $(4')_n$ in Definition \ref{def:dimension2} for $n>1$ in Definition \ref{def:dimension}.
	Let $I$ be a definable subset of $M$ with $\dim I=1$.
	
	Put $I_0:=I$ and $I_1 := M \setminus I$. 
	For every positive integer $n$, $0<k \leq n$ and $\tau \in \{0,1\}^n$, the $k$-th coordinate of $\tau$ is denoted by $\tau(k)$ and put $I \langle \tau \rangle:=\prod_{i=1}^{n} I_{\tau(i)}$.
	Recall that $\Pi:M^n \to M^{n-1}$ be the projection forgetting the last coordinate.
	We define the dimension function $$\mydim [ \dim, I ]:\myDef(\mathcal M) \to \mathbb N \cup \{-\infty\}$$ as follows:
	
	Let $X$ be a definable subset of $M^n$.
	\begin{itemize}
		\item We put $\mydim [ \dim, I ](X)=-\infty$ if $X=\emptyset$;
		\item Let $\tau \in \{0,1\}^n$ and assume that $\emptyset \neq X \subseteq I\langle \tau \rangle$.
		\begin{itemize}
			\item Suppose $n=1$. If $\tau(1)=0$, $\mydim [ \dim, I ](X):=\dim X$.
			
			Put $\mydim [ \dim, I ](X):=0$ if $\tau(1)=1$. 
			\item Suppose $n>1$. If $\tau(n)=0$, put 
			\begin{align*}
				&S_1(X):=\{x \in \Pi(X)\;|\; \dim X_x^{\Pi} =1 \} \text{ and }\\
				&S_0(X):=\Pi(X) \setminus S_1(X).
			\end{align*}
			We define $\mydim [ \dim, I ](X):=\max\{\dim S_0(X), \dim S_1(X)+1\}$.
			
			If $\tau(n)=1$, put $\mydim [ \dim, I ](X)=\mydim [ \dim, I ](\Pi(X))$.
		\end{itemize}
		\item If $X$ is not empty, we put $$\mydim [ \dim, I ](X):=\max\{\mydim[\dim, I](X \cap I\langle \tau \rangle)\;|\; \tau \in \{0,1\}^n\}.$$
	\end{itemize}
	
	In this paper, we mainly consider the case in which $\dim=\topdim$, where $\topdim$ is the topological dimension defined in Example \ref{ex:top_dim}.
	If $\dim=\topdim$, we write $\mydim[I]$ instead of $\mydim [ \dim, I ]$.
\end{definition}

\begin{lemma}\label{lem:new_dim}
	Let $\mathcal M$, $\dim:\myDef(\mathcal M) \to \mathbb N \cup \{-\infty\}$ and $I$ be as in Definition \ref{def:new_dim}.
	The function $\mydim[\dim, I]$ defined in Definition \ref{def:new_dim} satisfies condition $(1)$, $(2)_1$ and $(4')_n$ for $n>1$ in Definition \ref{def:dimension} and Definition \ref{def:dimension2}.
\end{lemma}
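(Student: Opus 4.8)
The plan is to verify each of the three conditions---(1), $(2)_1$, and $(4)_n$ for $n>1$---directly from the recursive definition of $\mydim[\dim,I]$, working from the simplest to the most structurally involved. Condition (1) is the quickest: $\mydim[\dim,I](\emptyset)=-\infty$ holds by fiat. For a singleton $\{a\}$, the point lies in exactly one piece $I\langle\tau\rangle$ with $n=1$; if $\tau(1)=1$ the value is $0$ by definition, and if $\tau(1)=0$ the value is $\dim\{a\}=0$ because $\dim$ itself satisfies (1). For $\mydim[\dim,I](M^1)$, we split $M=I_0\sqcup I_1$, getting $\max\{\dim I,0\}=1$ since $\dim I=1$ by hypothesis. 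Condition $(2)_1$ requires checking $\mydim[\dim,I](X\cup Y)=\max\{\mydim[\dim,I](X),\mydim[\dim,I](Y)\}$ for $X,Y\subseteq M^1$; because the top-level value is defined as a max over the two pieces $I_0,I_1$, and on $I_0$ the value is $\dim$ (which satisfies $(2)_1$) while on $I_1$ the value is constantly $0$, the identity follows by distributing the union across the two pieces and using the associativity of $\max$.

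The substantive work is condition $(4)_n$ for $n>1$, and I would organize it around the top-level decomposition $X=\bigcup_{\tau\in\{0,1\}^n}(X\cap I\langle\tau\rangle)$. The first task is to show definability of the sets $X(i)=\{x\in\Pi(X)\mid \mydim[\dim,I](X_x^\Pi)=i\}$ for $i=0,1$. Here I would exploit the fact that the fiber $X_x^\Pi$ lives in $M^1$, so its $\mydim[\dim,I]$-value is computed by the $n=1$ clause: it equals $\dim(X_x^\Pi\cap I_0)$ unless that intersection is empty or has $\dim$-value $0$, in which case the $I_1$-part may contribute the value $0$. Thus $\mydim[\dim,I](X_x^\Pi)=1$ exactly when $\dim(X_x^\Pi\cap I_0)=1$, which is an instance of the set $S_1$ appearing in the definition; definability of $X(1)$ and $X(0)$ then follows because $\dim$ satisfies $(4)_n$ and hence the relevant fiber-dimension sets are definable.

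The core identity $\mydim[\dim,I](X\cap\Pi^{-1}(X(i)))=\mydim[\dim,I](X(i))+i$ is where I expect the main obstacle. The natural strategy is to reduce to the $\tau(n)=0$ and $\tau(n)=1$ branches separately and then recombine via $(2)_{n-1}$, which is legitimate because Lemma~\ref{lem:2n} (applied to $\mydim[\dim,I]$ once we know it satisfies the hypotheses at lower levels, i.e.\ by an implicit induction on $n$) upgrades $(2)_1$ and $(4)_n$ to all $(2)_m$. The delicate point is the interaction between the last-coordinate split ($\tau(n)=0$ versus $\tau(n)=1$) and the fiber-dimension partition into $X(0),X(1)$: when $\tau(n)=1$ the fiber is a subset of $I_1$ and contributes dimension $0$ to every fiber, so such $\tau$ land entirely in $X(0)$, whereas when $\tau(n)=0$ the fiber can have $\dim$-dimension $0$ or $1$ and one must invoke that $\dim$ already satisfies $(4)_n$ on the relevant restricted sets. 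I would handle the case $i=0$ by showing both contributions ($\tau(n)=0$ with zero-dimensional fiber, and all $\tau(n)=1$) reduce to the $n{-}1$ statement, and the case $i=1$ by isolating the $S_1$-part where the $\dim$-addition property supplies the ``$+1$.'' The bookkeeping of which $\tau$-pieces feed into $X(0)$ versus $X(1)$, and ensuring the maxima over $\tau$ commute correctly with the addition of $i$, is the genuinely technical heart of the argument; everything else is routine unwinding of the definitions.
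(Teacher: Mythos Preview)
Your plan is correct and follows essentially the same route as the paper: both dispense with (1) and $(2)_1$ quickly, prove $(4)_n$ by induction on $n$ using Lemma~\ref{lem:2n} to secure $(2)_{n-1}$, identify $X(1)$ with the $\dim$-level set $S_1$ of $X\cap(M^{n-1}\times I)$, and verify the addition identity by splitting over $\tau(n)\in\{0,1\}$ and recombining via the max over $\tau$. One phrasing to tighten: when $\tau(n)=1$ the piece $X\cap I\langle\tau\rangle$ need not project entirely into $X(0)$---its projection can meet $X(1)$---but its contribution to $\mydim(X\cap\Pi^{-1}(X(1)))$ is dominated by the companion $\tau'$ with $\tau'(n)=0$, which is exactly the inequality the paper records; your closing remark about the $\tau$-bookkeeping already flags this as the point requiring care.
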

\begin{proof}
	Satisfaction of (1) and $(2)_1$ is obvious.
	We omit the proof.
	We prove $(4')_n$ by induction on $n$.
	Note that $(2)_i$ holds for $1 \leq i <n$ by Lemma \ref{lem:2n} and the induction hypothesis. 
	We abbreviate $\mydim[\dim, I]$ by $\mydim$ for the simplicity of notations.
	
	Let $X$ be a definable subset of $M^n$.
	Put $$X(i):=\{x \in \Pi(X)\;|\; \mydim X_x^{\Pi}=i\}$$ for $i=0,1$ as in Definition \ref{def:dimension2}$(4')_n$. 
	By the definition of $\mydim$, we obviously have
	\begin{align*}
		&X(1) = \{x \in \Pi(X \cap (M^{n-1} \times I)) \;|\;\dim(X \cap (M^{n-1} \times I))_x^{\Pi}=1\} \text{ and }\\
		&X(0)=\Pi(X) \setminus X(1).
	\end{align*}
	They are definable.
	
	We put $X_{\tau}:=X \cap \Pi^{-1}(I\langle \tau \rangle)$ for $\tau \in \{0,1\}^{n-1}$.
	Define $X_{\tau}(i)$ in the same manner as $X(i)$ for $i=0,1$.
	It is obvious that $X(i)=\bigcup_{\tau \in \{0,1\}^{n-1}} X_{\tau}(i)$.
	For every $\tau \in \{0,1\}^{n-1}$, let $\tau[i] \in \{0,1\}^n$ such that $\tau[i](j)=\tau(j)$ for $1 \leq j <n$ and $\tau[i](n)=i$ for $i=0,1$.
	
	Suppose $X_{\tau}$ enjoys property $(4')_n$ for every $\tau \in \{0,1\}^{n-1}$.
	By the definition of $\mydim$ and $(2)_{n-1}$, we have
	\begin{align*}
		\mydim  X &= \max \{\mydim (X \cap I\langle \tau'\rangle )\;|\; \tau' \in \{0,1\}^n\}\\
		&=\max\{ \max\{\mydim(X \cap I\langle \tau[i] \rangle )\;|\; i=0,1\}\;|\; \tau \in \{0,1\}^{n-1}\}\}\\
		&=\max\{\mydim X_{\tau}\;|\;\tau \in \{0,1\}^{n-1} \}\\
		&= \max \{\max\{\mydim X_{\tau}(i)+i\;|\; i=0,1\}\;|\; \tau  \in \{0,1\}^{n-1}\}\\
		&=\max\{\max\{\mydim X_{\tau}(i)\;|\; \tau \in \{0,1\}^{n-1}\}+i\;|\; i=0,1\}\\
		&=\max\{\mydim X(i) + i \;|\; i=0,1\}.
	\end{align*}
	Therefore, we may assume that there exists $\tau \in \{0,1\}^{n-1}$ such that $\Pi(X) \subseteq I\langle \tau \rangle $ without loss of generality.
	
	Suppose $\Pi(X) \subseteq I\langle \tau \rangle $ for some $\tau \in \{0,1\}^{n-1}$.
	Let $X_i:=X \cap I \langle \tau[i] \rangle$ for $i=0,1$.
	Define $X_0(i)$ in the same manner as $X(i)$ for $i=0,1$.
	Put $Y:=\Pi(X_1)$.
	By the definition of $\mydim$, we have the following:
	\begin{align*}
		X(1) &= X_0(1)\\
		X(0) &= X_0(0) \cup (Y \setminus X(1))
	\end{align*}
	It is obvious from the definition of $\mydim$ that 
	\begin{align*}
		\mydim X &= \max\{\mydim X_i\;|\; i=0,1\}\\
		&=\max\{\mydim X_0(1)+1,\mydim X_0(0), \mydim Y\}\\
		&=\max\{\mydim X(1)+1,\mydim X_0(0), \mydim Y\}.
	\end{align*}
	
	We consider three separate cases.
	First suppose $\mydim Y > \mydim X(1)$.
	We have $\mydim (Y \setminus X(1))=\mydim Y$ by $(2)_{n-1}$.
	We have $\mydim X(0)=\max\{\mydim X_0(0),\mydim Y\}$ by $(2)_{n-1}$.
	This implies $\mydim X = \max\{\mydim X(0),\mydim X(1)+1\}$.
	
	Next suppose $\mydim Y \leq \mydim X(1)<\mydim X_0(0)$.
	We have $\mydim (Y \setminus X(1)) < \mydim Y < \dim X_0(0)$.
	This implies $\mydim X(0) = \mydim X_0(0)$.
	Therefore, 
	\begin{align*}
	&\max\{\mydim X(0),\mydim X(1)+1\} = \max\{\mydim X_0(0) ,\mydim X(1)+1\}\\
	&=\max\{\mydim X_0(0), \mydim Y ,\mydim X(1)+1\}=\mydim X.
	\end{align*}
	
	Suppose $\mydim Y \leq \mydim X(1)$ and $\mydim X_0(0) \leq \mydim X(1)$.
	By $(2)_{n-1}$, we get $\mydim X(0) \leq \max\{\dim X_0(0), \dim Y\} \leq \dim X(1)$.
	We get 
	\begin{align*}
		\mydim(X)&=\max\{\mydim X(1)+1,\mydim X_0(0), \mydim (Y)\}\\
		&=\mydim(X(1))+1=\max\{\mydim X(0), \mydim X(1)+1\}.
	\end{align*}
\end{proof}

\begin{lemma}\label{lem:omin_basic}
	The dimension function on an o-minimal structure is completely determined by its restrictions to the set of all bounded open intervals.
\end{lemma}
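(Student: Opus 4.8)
The plan is to reduce the problem in two stages, each handled by a result already available in the excerpt: first from arbitrary definable sets down to bounded definable subsets of $M$, and then from bounded definable subsets of $M$ down to bounded open intervals, the latter step being where o-minimality enters.

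First I would invoke Corollary \ref{cor:unbounded0}, which applies since an o-minimal structure is in particular an expansion of a linear order. It asserts that a dimension function is completely determined by its restriction to the collection $\myDef_{1,\text{bdd}}(\mathcal M)$ of bounded definable subsets of $M$ (this reduction itself rests on Proposition \ref{prop:dim}(1), which brings everything down to $\myDef_1(\mathcal M)$). Thus it suffices to show that the values of $\dim$ on bounded definable subsets of $M$ are already determined by its values on bounded open intervals.

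To see this, let $X$ be an arbitrary bounded definable subset of $M$. By o-minimality, $X$ is a union of a finite set $F$ and finitely many open intervals $I_1,\ldots,I_k$. Because $X$ is bounded, each $I_j \subseteq X$ is itself a \emph{bounded} open interval, so no unbounded interval occurs in the decomposition. By Definition \ref{def:dimension}(1) every singleton has dimension $0$, so condition $(2)_1$ forces $\dim F \in \{-\infty, 0\}$, a value fixed independently of the choice of dimension function. Applying $(2)_1$ to the whole decomposition yields
$$\dim X = \max\{\dim F, \dim I_1, \ldots, \dim I_k\},$$
which exhibits $\dim X$ as computed from the values of $\dim$ on bounded open intervals together with the canonical value on the finite part. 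Combining this with the reduction of the previous paragraph gives the claim.

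The argument is essentially bookkeeping, so I do not expect a serious obstacle; the only point demanding care is that boundedness of $X$ is inherited by each interval $I_j$ appearing in its o-minimal decomposition, guaranteeing that the final reduction lands precisely on \emph{bounded} open intervals rather than on arbitrary intervals.
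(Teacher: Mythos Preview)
Your proof is correct and follows the same route as the paper, which simply says the lemma follows from Corollary~\ref{cor:unbounded0} and o-minimality; you have merely spelled out the o-minimality step (decomposing a bounded $X$ into a finite set and bounded open intervals and applying $(2)_1$) that the paper leaves implicit.
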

\begin{proof}
	This lemma follows from Corollary \ref{cor:unbounded0} and o-minimality.
\end{proof}

\begin{lemma}\label{lem:omin_dimform}
	Let $\mathcal M=(M,<,\ldots)$ be an o-minimal structure.
	Every dimension function on $\mathcal M$ is of the form $\mydim[I]$ for some definable subset $I$ of $M$.
\end{lemma}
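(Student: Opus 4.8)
The plan is to recover a suitable set $I$ directly from the values of the given dimension function $\dim$ on one-dimensional definable sets, and then to verify $\dim=\mydim[I]$ by reducing everything to $\myDef_1(\mathcal M)$ through the addition property. By Corollary \ref{cor:unbounded0} the function $\dim$ is determined by its values on bounded definable subsets of $M$, and by o-minimality every definable subset of $M$ is a finite union of points and open intervals; so the only genuine question in one variable is which intervals receive dimension $1$, and this is what $I$ must encode (cf.\ Lemma \ref{lem:omin_basic}).

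First I would pin down the candidate $I$. The set of triples $\{(a,b,x)\in M^3 : a<x<b\}$ is definable, and applying Proposition \ref{prop:dim}(3) to the projection forgetting $x$ shows that $\{(a,b) : \dim((a,b))=0\}$ is definable. Hence the set $G:=\{x\in M : \exists a,b\ (a<x<b \text{ and } \dim((a,b))=0)\}$ of points admitting a zero-dimensional neighbourhood is a definable open set, and I take $I:=M\setminus G$.

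The core of the argument is to prove, for every definable $X\subseteq M$, that $\dim X=1$ if and only if $X\cap I$ is infinite (equivalently, contains an interval). One direction is short: if $(a,b)\subseteq I$ then $(a,b)$ cannot have dimension $0$, for otherwise it would be a zero-dimensional neighbourhood of each of its points and so lie in $G$; thus $\dim((a,b))=1$, and $\dim X=1$ whenever $X\cap I$ contains an interval. The reverse direction is where I expect the main difficulty: I must show that if $X\cap I$ is finite then $\dim X=0$. By o-minimality $X$ is then, off a finite set, a finite union of open intervals each contained in $G$, so it suffices to prove that an open interval $W=(\alpha,\beta)$ with $W\subseteq G$ has $\dim W=0$. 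This is a local-to-global statement, and I would prove it by the supremum technique of Lemmas \ref{lem:discrete_closed} and \ref{lem:interval1}: fixing an interior base point $z_0\in W$, the set $\{x\in(z_0,\beta) : \dim((z_0,x))=0\}$ is a nonempty initial segment (nonempty precisely because $z_0\in G$ has a zero-dimensional neighbourhood), and using monotonicity together with a zero-dimensional neighbourhood of its supremum one rules out the supremum being $<\beta$; the symmetric argument handles $(\alpha,z_0)$, and Lemma \ref{lem:unbounded0} then gives $\dim((z_0,\beta))=\dim((\alpha,z_0))=0$, whence $\dim W=0$.

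Finally I would assemble the pieces. Applying the characterization to $X=M$ gives $\dim M=1$, so $I$ is infinite, $\topdim I=1$, and $\mydim[I]$ is defined. Unwinding Definition \ref{def:new_dim} in one variable shows $\mydim[I](X)=1$ exactly when $X\cap I$ is infinite, so $\dim$ and $\mydim[I]$ agree on $\myDef_1(\mathcal M)$. To propagate this to all of $\myDef(\mathcal M)$ I would induct on $n$: both functions satisfy $(1)$ and $(4)_n$ (the latter for $\mydim[I]$ by Lemma \ref{lem:new_dim}), hence $(2)_n$ by Lemma \ref{lem:2n}. For $X\subseteq M^n$ the fibers $X_x^{\Pi}$ lie in $\myDef_1(\mathcal M)$, where the two functions already agree; therefore the partition sets $X(0),X(1)$ of $(4)_n$ are the \emph{same} for both, and combining $(4)_n$, $(2)_n$ and the inductive hypothesis on $\myDef_{n-1}(\mathcal M)$ forces $\dim X=\mydim[I](X)$. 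I note that this last step never invokes property $(3)$, so I avoid having to check that $\mydim[I]$ is coordinate-symmetric, i.e.\ that it is itself a dimension function.
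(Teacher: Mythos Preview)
Your approach is correct and essentially matches the paper's: recover $I$ from the $\dim$-values on intervals, run a supremum argument to show that any interval lying outside $I$ has dimension~$0$, and then extend agreement from $\myDef_1(\mathcal M)$ to $\myDef(\mathcal M)$. The only notable differences are cosmetic---your two-sided $I=M\setminus G$ versus the paper's one-sided $I=\{a:\forall b>a,\ \dim((a,b))=1\}$ (which forces the paper to prove an extra claim ruling out finite $I\cap J$)---and your explicit induction in the last step, which is in fact tidier than the paper's appeal to Lemma~\ref{lem:omin_basic}, since that lemma is stated for dimension functions and $\mydim[I]$ has not yet been shown to satisfy $(3)_n$.
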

\begin{proof}
	Let $\dim$ be a dimension function on $\mathcal M$.
	Put $$I:=\{a \in M\;|\; \forall b \ (a<b) \rightarrow \dim((a,b))=1\}.$$
	$I$ is definable by Definition \ref{def:dimension}(4).
	\medskip
	
	\textbf{Claim 1.} For every $a \in I$, there exists $b>a$ such that $(a,b) \subseteq I$.
	\begin{proof}[Proof of Claim 1]
		Let $a \in I$.
		Assume for contradiction that such $b$ does not exist.
		Because of o-minimality, there exists $c>a$ such that $(a,c) \cap I=\emptyset$. 
		Since $a \in I$, we have $\dim ((a,c))=1$.
		On the other hand, we have $\dim ((d,c))=0$ for every $a<d<c$.
		By Lemma \ref{lem:unbounded0}, we have $\dim((a,c))=0$, which is absurd.
	\end{proof}
	
	We want to show that $\dim=\mydim[I]$.
	For that purpose, by Lemma \ref{lem:omin_basic}, we have only to show that, for every bounded open interval $J$, $\dim(J)=1$ if and only if $I \cap J$ has a nonempty interior.
	The `if' part is easy from the definition of $I$.
	We show the `only if' part.
	Suppose $I \cap J$ has an empty interior.
	We want to show that $\dim J=0$.
	
	We show that $I \cap J$ is an empty set.
	Assume for contradiction that $I \cap J$ is not empty and has an empty interior.
	Because of o-minimality, $I \cap J$ is a finite set.
	Choose $a \in I \cap J$.
	Then we can choose $b>a$ such that $b \in J$ and $(a,b) \cap I=\emptyset$, which contradicts Claim 1.
	
	Let $J=(a,b)$, where $a,b \in M$.
	By Claim 1 and $I \cap J=\emptyset$, we have $a \notin I$.
	There exists $c>a$ such that $\dim((a,c))=0$ by the definition of $I$.
	We may assume $c<b$ by choosing smaller $c$ by Definition \ref{def:dimension}(2).
	Consider the definable set $$X:=\{d \in M\;|\; a<d<b, \dim((a,d))=0\}.$$
	The set $X$ is not empty because $c \in X$.
	Let $e:=\sup X \in M \cup \{+\infty\}$.
	By Lemma \ref{lem:unbounded0}, we have $\dim((a,e))=0$.
	We show $e=b$.
	Assume for contradiction $e<b$.
	Observe that $e \in J$, and $e \notin I$ because $I \cap J=\emptyset$.
	Since $e \notin I$, we can take $e<f<b$ such that $\dim((e,f))=0$.
	We have $\dim((a,f))=\max\{\dim((a,e)),\dim(\{e\}),\dim((e,f))\}=0$ by Definition \ref{def:dimension}(2), which contradicts the definition of $e$.
	We have shown $\sup X=b$.
	By Lemma \ref{lem:unbounded0}, we get $\dim J=\dim((a,b))=0$.
%
\end{proof}

Let $\mathcal M=(M,<,\ldots)$ be an o-minimal structure.
A definable subset $I$ of $M$ is called \textit{self-sufficient} if, it is infinite and, for every infinite definable sets $J_1 \subseteq I$ and $J_2 \subseteq M \setminus I$, there does not exist a definable bijection between $J_1$ and $J_2$. 
Observe that, if $M \setminus I$ has an empty interior, $I$ is self-sufficient.
\begin{lemma}\label{lem:self}
	Let $\mathcal M=(M,<,\ldots)$ be an o-minimal structure.
	An infinite definable subset $I$ of $M$ is self-sufficient if and only if, for every definable subset $J$ of $I$ and every definable function $f:J \to M \setminus I$, the image $f(J)$ is a finite set.
\end{lemma}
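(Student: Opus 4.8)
The plan is to prove the two implications separately, with essentially all of the content concentrated in the forward direction.

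The ``if'' direction (the image condition implies self-sufficiency) is straightforward via contraposition. If $I$ is not self-sufficient, then by definition there are infinite definable sets $J_1 \subseteq I$ and $J_2 \subseteq M \setminus I$ together with a definable bijection $g \colon J_1 \to J_2$. Taking $J := J_1 \subseteq I$ and $f := g$, we obtain a definable function $f \colon J \to M \setminus I$ whose image $f(J) = J_2$ is infinite, so the image condition fails. Hence the image condition forces $I$ to be self-sufficient.

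For the ``only if'' direction I would also argue by contraposition: assuming the image condition fails, I produce a definable bijection witnessing the failure of self-sufficiency. So suppose there are a definable $J \subseteq I$ and a definable function $f \colon J \to M \setminus I$ with $f(J)$ infinite. The difficulty is that $f$ need not be injective, so $f$ itself need not restrict to a bijection onto an infinite set. This is resolved by the Monotonicity Theorem for o-minimal structures \cite{vdD}: there is a partition of $J$ into finitely many points and finitely many open intervals such that on each interval $f$ is either constant or strictly monotone and continuous. Now comes the key counting step: each of the finitely many points contributes a single value to $f(J)$, and on each interval on which $f$ is constant the contribution is again a single value, so these pieces account for only finitely many elements of $f(J)$. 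Since $f(J)$ is infinite, there must exist an open interval $J_1 \subseteq J$ on which $f$ is strictly monotone, and hence injective. Because $J_1$ is an open interval in a dense linear order, it is infinite, and injectivity then makes $J_2 := f(J_1)$ infinite as well. Thus $J_1 \subseteq I$ and $J_2 \subseteq M \setminus I$ are infinite definable sets and $f|_{J_1} \colon J_1 \to J_2$ is a definable bijection, contradicting the self-sufficiency of $I$.

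The main obstacle is exactly the potential non-injectivity of $f$, and the Monotonicity Theorem is precisely the tool that overcomes it: once an interval of strict monotonicity has been isolated, restricting $f$ to it furnishes the required bijection. No higher-dimensional o-minimal machinery (such as cell decomposition) is needed here, since the entire argument takes place inside $M^1$.
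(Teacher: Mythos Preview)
Your proof is correct and follows essentially the same approach as the paper: both use the Monotonicity Theorem to decompose $J$ into finitely many points and intervals, observe that the constant pieces and isolated points contribute only finitely many values, and conclude that a strictly monotone piece would yield a definable bijection onto an infinite subset of $M\setminus I$. The only cosmetic difference is that the paper argues the ``only if'' direction directly (assuming self-sufficiency and ruling out strictly monotone pieces) while you phrase it contrapositively, but the mathematical content is identical.
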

\begin{proof}
	Suppose $I$ is self-sufficient.
	Let $J$ be an infinite definable subset of $I$ and $f:J \to M \setminus I$ be a definable function.
	By the monotonicity theorem \cite[Chapter 3, Theorem 1.2]{vdD}, there exist a finite subset $F$ of $J$ and finitely many open intervals $J_1,\ldots, J_l$ such that $J_i \subseteq J$ and the restriction $f|_{J_i}$ of $f$ to $J_i$ is monotone and continuous for $1 \leq i \leq l$.
	For each $1 \leq i \leq l$, if $f|_{J_i}$ is strictly monotone, $f|_{J_i}$ is a definable bijection from $J_i$ onto the infinite definable set $f(J_i)$, which contradicts that $I$ is self-sufficient.
	This implies $f|_{J_i}$ is constant, and $f(J_i)$ is a singleton.
	The image $f(J)=f(F) \cup \bigcup_{i=1}^l f_i(J_i)$ is a finite set.
	
	We can easily prove the contraposition of the `if' part.
	We omit the proof.
\end{proof}

\begin{lemma}\label{lem:omin_equiv}
	Let $\mathcal M=(M,<,\ldots)$ be an o-minimal structure.
	Let $I \subseteq M$ be a nonempty definable set having nonempty interior.
	The function $\mydim[I]$ is a dimension function if and only if $I$ is self-sufficient.
\end{lemma}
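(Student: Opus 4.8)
The plan is to reduce the entire equivalence to verifying a single symmetry axiom. By Lemma~\ref{lem:new_dim} the function $\mydim[I]$ already satisfies conditions $(1)$, $(2)_1$ and $(4)_n$ for all $n>1$, so by Proposition~\ref{prop:weak} it is a dimension function if and only if it also satisfies condition $(3')$ of Definition~\ref{def:dimension2}, that is, $\mydim[I](X)=\mydim[I](X^{\text{switch}})$ for every definable $X\subseteq M^2$. Both directions therefore become assertions about $\mydim[I]$ on $\myDef_2(\mathcal M)$, and I would begin by evaluating $\mydim[I]$ on the four blocks $X\cap I\langle\tau\rangle$, $\tau\in\{0,1\}^2$. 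Unwinding Definition~\ref{def:new_dim} (and using that $\topdim$ enjoys the addition property, as $\mathcal M$ is o-minimal), the block inside $I\times I$ contributes exactly its topological dimension and the block inside $(M\setminus I)\times(M\setminus I)$ contributes $0$ when nonempty; both are manifestly invariant under switching coordinates. Hence any asymmetry must come from the two cross blocks $A:=X\cap(I\times(M\setminus I))$ and $B:=X\cap((M\setminus I)\times I)$, whose contributions are $\mydim[I](A)=\topdim\Pi(A)$ and $\mydim[I](B)$, the latter equal to $1$ precisely when some fibre $B_x^{\Pi}$ has nonempty interior and to $0$ otherwise.

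For the forward implication I argue by contraposition. If $I$ is not self-sufficient, then by Lemma~\ref{lem:self} there is a definable $J\subseteq I$ and a definable $f\colon J\to M\setminus I$ with infinite image; shrinking $J$ via the monotonicity theorem I may assume $f$ is a strictly monotone bijection onto its image. Its graph $\Gamma\subseteq I\times(M\setminus I)$ has $\Pi(\Gamma)=J$ infinite, hence $\mydim[I](\Gamma)=\topdim J=1$, whereas $\Gamma^{\text{switch}}\subseteq(M\setminus I)\times I$ has only singleton fibres over $M\setminus I$, so its value is computed through the $(M\setminus I)$-base and equals $0$. Thus $(3')$ fails and $\mydim[I]$ is not a dimension function.

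For the converse I assume $I$ is self-sufficient and fix a definable $X\subseteq M^2$. Writing $p$ and $q$ for the contributions of the two cross blocks of $X$ and $p',q'$ for those of $X^{\text{switch}}$, the block computation reduces the goal to $\max\{p,q\}=\max\{p',q'\}$. Two inequalities are purely topological: if a fibre $B_x^{\Pi}$ of $X$ has nonempty interior then the second-coordinate projection of $B$ contains an interval, so $q\le p'$, and the same reasoning applied to $X^{\text{switch}}$ gives $q'\le p$. It then remains to establish $p\le q'$ and $p'\le q$; combined with the topological inequalities these yield $p=q'$ and $q=p'$, hence the desired equality. Both remaining inequalities are instances of a single claim, applied to $A$ and to $B^{\text{switch}}$ respectively (note $B^{\text{switch}}\subseteq I\times(M\setminus I)$ and $\Pi(B^{\text{switch}})$ is the second-coordinate projection of $B$).

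The heart of the matter, which I expect to be the main obstacle, is that single claim: if $W\subseteq I\times(M\setminus I)$ is definable and $\Pi(W)$ contains an interval, then some slice $\{x\in I:(x,u)\in W\}$ with $u\in M\setminus I$ contains an interval. To prove it I would apply cell decomposition to $W$ over a subinterval $J\subseteq\Pi(W)$ on which the cell structure is uniform, and analyse a cell $C$ projecting onto $J$. If $C$ is a two-dimensional band between continuous definable functions $f<g$ with values in $M\setminus I$, then fixing $x_0\in J$ and choosing $u$ in the open interval $(f(x_0),g(x_0))$, continuity makes $\{x\in J:f(x)<u<g(x)\}$ a neighbourhood of $x_0$, so the slice contains an interval. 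If instead $C$ is the graph of a continuous definable $h\colon J\to M\setminus I$, then self-sufficiency through Lemma~\ref{lem:self} forces $h(J)$ to be finite, whence $h$ is constant on the definably connected interval $J$; thus $J\times\{c\}\subseteq W$ for some $c\in M\setminus I$ and the slice over $c$ contains $J$. In either case such a slice exists, which establishes the claim and completes the verification of $(3')$.
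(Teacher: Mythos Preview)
Your proof is correct and follows essentially the same route as the paper. Both reduce, via Proposition~\ref{prop:weak} and Lemma~\ref{lem:new_dim}, to verifying condition~$(3')$ on $\myDef_2(\mathcal M)$; both dispose of the $I\times I$ and $(M\setminus I)\times(M\setminus I)$ blocks trivially; and in the cross blocks both invoke cell decomposition together with Lemma~\ref{lem:self} to conclude that any continuous definable function from an interval in $I$ into $M\setminus I$ is constant. The forward direction is also the same idea: the paper phrases the contradiction through Proposition~\ref{prop:dim}(2), you exhibit directly a failure of $(3')$ on a graph, but the witness is identical.

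The only organisational difference is that the paper first reduces (via $(2)_2$ and cell decomposition) to a single cell contained in one of the four blocks and then checks $\mydim[I](X)=\mydim[I](X^{\text{switch}})$ cell by cell, whereas you keep $X$ arbitrary, name the cross-block contributions $p,q,p',q'$, and close the argument with the four inequalities $q\le p'$, $q'\le p$, $p\le q'$, $p'\le q$. Your packaging is a little more elaborate than necessary, since the paper's direct reduction to cells avoids tracking these quantities, but it is sound and the substantive step (the ``main claim'' in your last paragraph) is exactly the paper's computation.
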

\begin{proof}
	For simplicity, we put $\dim = \mydim[I]$.
	
	First suppose that $I$ is not self-sufficient.
	There exist infinite definable sets $J_1 \subseteq I$, $J_2 \subseteq M \setminus I$ and a definable bijection $f:J_1 \to J_2$.
	By o-minimality and the definition of the topological dimension, we have $\dim J_1=1$.
	The $f$ witnesses that $\dim$ violates Proposition \ref{prop:dim}(2) because $\dim J_1=1$ and $\dim J_2=0$.
	This means that $\dim$ is not a dimension function.
	
	Next suppose that $I$ is self-sufficient.
	Let $X$ be an arbitrary definable subset of $M^2$.
	We have only to show that $\dim X = \dim X^{\text{switch}}$ by Proposition \ref{prop:weak} and Lemma \ref{lem:new_dim}.
	
	Observe that $\dim$ enjoys the properties (2) and $(4)$ in Definition \ref{def:dimension} by Lemma \ref{lem:2n} and Lemma \ref{lem:new_dim}.
	By the definable cell decomposition theorem \cite[Chapter 3, Theorem 2.11]{vdD} and Definition \ref{def:dimension}(2), we may assume that $X$ is a cell contained in one of $I \times I$, $I \times (M \setminus I)$, $(M \setminus I) \times I$ and $(M \setminus I) \times (M \setminus I)$.
	If $X$ is contained in $I \times I$, then the equality $\dim X = \dim X^{\text{switch}}$ follows from the fact that $\topdim$ is a dimension function.
	By Definition \ref{def:dimension}(4), we also have $\dim X = \dim X^{\text{switch}}=0$ if $X$ is contained in $(M \setminus I) \times (M \setminus I)$.
	
	Next we consider the case in which $X$ is contained in $I \times (M \setminus I)$.
	If $\Pi_1^2(X)$ is a singleton, we easily deduce that $\dim X = \dim X^{\text{switch}}=0$.
	Let us consider the case in which $\Pi_1^2(X)$ is an open interval, say $J$.
	We have two separate cases.
	First we consider the case where $X$ is the graph of definable continuous function $f:J \to M$.
	By Lemma \ref{lem:self}, $f(J)$ is a finite set.
	$f(J)$ is a singleton because $f$ is continuous and $J$ is definably connected.
	This means that $f$ is a constant function.
	It is easy to show that $\dim X = \dim X^{\text{switch}}=1$.
	We omit the verification of this equality.
	
	Let us consider the case in which there are exist definable continuous functions $f_1,f_2: J \to M \cup \{\pm \infty\}$ such that $f_1<f_2$ and $X:=\{(x,y) \in J \times M\;|\; f_1(x)<y<f_2(x)\}$.
	Here, we say that $f:J \to M \cup \{\pm \infty\}$ is continuous if either $f$ is constantly $+\infty$, constantly $-\infty$ or is a continuous function whose target is $M$.
	In the same manner as the previous paragraph, $f_1$ and $f_2$ should be constant functions.
	It is easy to show that $\dim X = \dim X^{\text{switch}}=1$.
	We omit the proof.
	
	Finally, let us consider the case in which $X$ is contained in $(M \setminus I) \times I$.
	We have $X^{\text{switch}} \subseteq I \times (M \setminus I)$.
	We have already shown that $\dim X^{\text{switch}}=\dim (X^{\text{switch}})^{\text{switch}}$ in the previous case.
	We get $\dim X = \dim X^{\text{switch}}$ because $(X^{\text{switch}})^{\text{switch}}=X$.
\end{proof}

Let $\mathcal M=(M,<,\ldots)$ be an o-minimal structure.
We define the equivalence relation $\myfineq$ on $\myDef_1(\mathcal M)$ by $I_1 \myfineq I_2$ if and only if $(I_1 \setminus I_2) \cup (I_2 \setminus I_1)$ is a finite set for  $I_1,I_2 \in \myDef_1(\mathcal M)$.
The following lemma is easy to prove.
\begin{lemma}\label{lem:fineq}
Let $I_1,I_2 \in \myDef_1(\mathcal M)$ with $I_1 \myfineq I_2$.
$I_1$ is self-sufficient if and only if $I_2$ is so.
\end{lemma}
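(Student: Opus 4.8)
The plan is to reduce to a single implication using the symmetry of $\myfineq$, and then to apply the characterization of self-sufficiency provided by Lemma \ref{lem:self}. Since $\myfineq$ is symmetric, it suffices to show that if $I_1$ is self-sufficient and $I_1 \myfineq I_2$, then $I_2$ is self-sufficient. Write $F := (I_1 \setminus I_2) \cup (I_2 \setminus I_1)$, which is finite by hypothesis. First I would record that $I_2$ is infinite: as $I_1$ is infinite and differs from $I_2$ only on the finite set $F$, the set $I_2$ is infinite as well, so the infiniteness clause in the definition of self-sufficiency is met.

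Next, by Lemma \ref{lem:self} it is enough to verify the finite-image condition for $I_2$, namely that for every definable $J \subseteq I_2$ and every definable $f : J \to M \setminus I_2$ the image $f(J)$ is finite. The key observation is that outside $F$ the sets $I_1$ and $I_2$ coincide, and so do their complements. Accordingly, I would pass to the definable subset $J'' := \{x \in J \mid x \notin F \text{ and } f(x) \notin F\}$; this is definable because $F$ is a finite, hence definable, set. For $x \in J''$ we have $x \in I_2 \setminus F = I_1 \setminus F \subseteq I_1$ and $f(x) \in (M \setminus I_2) \setminus F = (M \setminus I_1) \setminus F \subseteq M \setminus I_1$, so $f|_{J''}$ is a definable function from a subset of $I_1$ into $M \setminus I_1$. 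Self-sufficiency of $I_1$, via Lemma \ref{lem:self}, then yields that $f(J'')$ is finite.

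It remains to account for $J \setminus J''$, which is contained in $(J \cap F) \cup f^{-1}(F)$. The first piece is finite, so has finite image; the second maps into the finite set $F$, so also has finite image. Therefore $f(J) = f(J'') \cup f(J \setminus J'')$ is finite, and Lemma \ref{lem:self} gives that $I_2$ is self-sufficient, completing the reduction. I do not expect a genuine obstacle here: the whole content is the bookkeeping ensuring that the restricted map truly lands in $M \setminus I_1$ and that the discarded pieces contribute only finitely to the image. O-minimality enters only through Lemma \ref{lem:self}, which has already absorbed the substantive work.
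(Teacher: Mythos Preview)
Your argument is correct. The paper does not actually give a proof of this lemma; it merely remarks that the lemma is easy to prove, so there is no authorial argument to compare against. Your route through Lemma~\ref{lem:self} is a clean way to fill the gap: the key bookkeeping step---that outside the finite symmetric difference $F$ the sets $I_1,I_2$ and their complements agree, so $f|_{J''}$ is a definable map from a subset of $I_1$ into $M\setminus I_1$---is exactly right, and the leftover part $J\setminus J''$ contributes only finitely many values.

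One could also argue directly from the definition without invoking Lemma~\ref{lem:self}: given a hypothetical definable bijection $g:J_1\to J_2$ with $J_1\subseteq I_2$ and $J_2\subseteq M\setminus I_2$ both infinite, restrict $g$ to $J_1\setminus(F\cup g^{-1}(F))$ to obtain a definable bijection between an infinite subset of $I_1$ and an infinite subset of $M\setminus I_1$. This is essentially the same finite-trimming idea you used, just phrased for bijections rather than arbitrary maps, and neither version is materially simpler than the other.
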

Let $\mySelf(\mathcal M)$ be the set of the equivalence class of self-sufficient sets under the equivalence relation $\myfineq$.

\begin{lemma}\label{lem:well_defined}
	Let $\mathcal M$ be an o-minimal structure.
	For self-sufficient sets $I_1$ and $I_2$, the equality $\mydim[I_1]=\mydim[I_2]$ holds if and only if $I_1 \myfineq I_2$.
\end{lemma}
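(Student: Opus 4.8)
The plan is to prove both implications by reducing everything to the behaviour of $\mydim[I_1]$ and $\mydim[I_2]$ on bounded open intervals, where both functions admit a transparent description. First I would read off, directly from Definition \ref{def:new_dim} in the case $n=1$, the formula
\[
  \mydim[I](J)=\begin{cases} 1 & \text{if } I\cap J \text{ has nonempty interior},\\ 0 & \text{if } J\neq\emptyset \text{ and } I\cap J \text{ has empty interior},\\ -\infty & \text{if } J=\emptyset,\end{cases}
\]
valid for any definable $J\subseteq M$. Indeed, by definition $\mydim[I](J)=\max\{\topdim(J\cap I),\mydim[I](J\setminus I)\}$; the first term equals $1$ exactly when $J\cap I$ has nonempty interior by o-minimality, while the second term equals $0$ whenever $J\setminus I\neq\emptyset$. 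Since $I_1$ and $I_2$ are self-sufficient, Lemma \ref{lem:omin_equiv} ensures that $\mydim[I_1]$ and $\mydim[I_2]$ are genuine dimension functions, so by Lemma \ref{lem:omin_basic} each is completely determined by its restriction to the bounded open intervals. Consequently $\mydim[I_1]=\mydim[I_2]$ is equivalent to the assertion that, for every bounded open interval $J$, the sets $I_1\cap J$ and $I_2\cap J$ have nonempty interior simultaneously.

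For the implication $I_1\myfineq I_2\Rightarrow\mydim[I_1]=\mydim[I_2]$, I would use the elementary observation that removing or adjoining finitely many points never changes whether a subset of $M$ has nonempty interior: an open interval is infinite, so deleting a finite set from it still leaves an open subinterval. Hence if the symmetric difference of $I_1$ and $I_2$ is finite, then for each bounded open interval $J$ the sets $I_1\cap J$ and $I_2\cap J$ differ by a finite set and therefore have nonempty interior simultaneously. The two functions then agree on every bounded open interval, so they coincide by the reduction above.

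For the converse I would argue by contraposition. Assuming $I_1\not\myfineq I_2$, at least one of $I_1\setminus I_2$ and $I_2\setminus I_1$ is infinite; say $I_1\setminus I_2$ is infinite. By o-minimality an infinite definable subset of $M$ contains a bounded open interval $J$, and then $J\subseteq I_1$ while $J\cap I_2=\emptyset$. The displayed formula now gives $\mydim[I_1](J)=1$ and $\mydim[I_2](J)=0$, so $\mydim[I_1]\neq\mydim[I_2]$, as required.

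The only genuinely delicate point is the first paragraph, namely extracting the interval formula from the recursive Definition \ref{def:new_dim} and justifying that self-sufficiency permits invoking the determination result of Lemma \ref{lem:omin_basic}. Once that reduction is in place, both implications follow at once from two routine o-minimal facts: an infinite unary definable set has nonempty interior, and this property is insensitive to finite modifications.
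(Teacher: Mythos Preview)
Your argument is correct and follows essentially the same route as the paper: both first observe (the paper implicitly, you explicitly via the displayed formula) that the restrictions of $\mydim[I_1]$ and $\mydim[I_2]$ to unary definable sets agree if and only if $I_1\myfineq I_2$, and then invoke self-sufficiency through Lemma~\ref{lem:omin_equiv} so that Proposition~\ref{prop:dim}(1) (or its o-minimal refinement Lemma~\ref{lem:omin_basic}, as you use) promotes this to full equality. The only cosmetic difference is that the paper cites Proposition~\ref{prop:dim}(1) directly while you pass through Lemma~\ref{lem:omin_basic}, which is harmless since the latter is an immediate corollary of the former in the o-minimal setting.
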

\begin{proof}
	It is easy to show that, for self-sufficient sets $I_1$ and $I_2$, the equality $$\mydim[I_1]|_{\myDef_1(\mathcal M)}=\mydim[I_2]|_{\myDef_1(\mathcal M)}$$ holds if and only if $I_1 \myfineq I_2$, where $\mydim[I_i]|_{\myDef_1(\mathcal M)}$ is the restriction of $\mydim[I_i]$ to $\myDef_1(\mathcal M)$ for $i=1,2$.
	Since $\mydim[I_1]$ and $\mydim[I_2]$ are dimension functions by Lemma \ref{lem:omin_equiv}, we have the equality $\mydim[I_1]=\mydim[I_2]$ holds if and only if $I_1 \myfineq I_2$ by Proposition \ref{prop:dim}(1).
\end{proof}

\begin{proposition}\label{prop:omin_equiv}
	Let $\mathcal M$ be an o-minimal structure.
	Then, there exists a one-to-one correspondence of $\mySet(\mathcal M)$ with $\mySelf(\mathcal M)$. 
\end{proposition}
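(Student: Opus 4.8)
The plan is to exhibit the one-to-one correspondence $\mySet(\mathcal M) \to \mySelf(\mathcal M)$ explicitly and verify it is a well-defined bijection, drawing on the lemmas already proved. The natural map runs in the opposite direction: to each equivalence class $[I] \in \mySelf(\mathcal M)$ of a self-sufficient set $I$, I would assign the dimension function $\mydim[I]$. Lemma \ref{lem:omin_equiv} guarantees that $\mydim[I]$ is genuinely a dimension function (this is where self-sufficiency is used), and Lemma \ref{lem:well_defined} guarantees this assignment is independent of the representative, since $\mydim[I_1] = \mydim[I_2]$ exactly when $I_1 \myfineq I_2$. Thus $\Phi\colon \mySelf(\mathcal M) \to \mySet(\mathcal M)$, $\Phi([I]) := \mydim[I]$, is a well-defined map.

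Next I would check injectivity and surjectivity of $\Phi$. Injectivity is immediate from the ``only if'' direction of Lemma \ref{lem:well_defined}: if $\Phi([I_1]) = \Phi([I_2])$, that is $\mydim[I_1] = \mydim[I_2]$, then $I_1 \myfineq I_2$, so $[I_1] = [I_2]$. For surjectivity, let $\dim$ be an arbitrary dimension function on $\mathcal M$. By Lemma \ref{lem:omin_dimform}, there is a definable subset $I \subseteq M$ with $\dim = \mydim[I]$. The remaining point is that this particular $I$ may be taken self-sufficient: since $\mydim[I] = \dim$ is a dimension function, the ``only if'' direction of Lemma \ref{lem:omin_equiv} forces $I$ to be self-sufficient (provided $I$ has nonempty interior; and if $I$ had empty interior, o-minimality would make $I$ finite, contradicting $\dim I = \topdim$-behavior giving $\dim M^1 = 1$, so one checks $I$ has nonempty interior). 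Hence $\dim = \Phi([I])$ lies in the image, and $\Phi$ is surjective.

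The main obstacle I anticipate is the bookkeeping in surjectivity, specifically reconciling Lemma \ref{lem:omin_dimform} (which produces \emph{some} definable $I$) with the self-sufficiency requirement of Lemma \ref{lem:omin_equiv} and \ref{lem:well_defined} (which speak only about self-sufficient sets). The clean way is to note that $\mydim[I]$ being a dimension function already \emph{implies} $I$ is self-sufficient via Lemma \ref{lem:omin_equiv}, so no replacement of $I$ is needed; one only has to dispatch the degenerate case in which $I$ fails to have nonempty interior. I would also double-check that the $I$ constructed in the proof of Lemma \ref{lem:omin_dimform} necessarily has nonempty interior: indeed $\dim M = 1$ and Lemma \ref{lem:omin_basic} force some bounded interval $J$ with $\dim J = 1$, and by the characterization in Lemma \ref{lem:omin_dimform} this means $I \cap J$ has nonempty interior, so $I$ does too. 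With these observations, $\Phi$ is the desired bijection and the proof concludes.
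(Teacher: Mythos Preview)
Your proposal is correct and follows essentially the same route as the paper: both define the map $\mySelf(\mathcal M)\to\mySet(\mathcal M)$ sending $[I]\mapsto\mydim[I]$, use Lemmas \ref{lem:omin_equiv} and \ref{lem:well_defined} for well-definedness and injectivity, and Lemmas \ref{lem:omin_dimform} and \ref{lem:omin_equiv} for surjectivity. Your additional verification that the $I$ produced by Lemma \ref{lem:omin_dimform} has nonempty interior (so that Lemma \ref{lem:omin_equiv} applies) is a detail the paper leaves implicit but which you handle correctly.
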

\begin{proof}
	Let $\iota: \mySelf(\mathcal M) \to \mySet(\mathcal M)$ given by $\iota([I])=\mydim[I]$, where $[I]$ is the equivalence class of the self-sufficient set $I$.
	The map $\iota$ is well-defined by Lemma \ref{lem:omin_equiv} and Lemma \ref{lem:well_defined}.
	The injectivity of $\iota$ easily follows from Lemma \ref{lem:well_defined}.
	The surjectivity of $\iota$ follows from Lemma \ref{lem:omin_dimform} and Lemma \ref{lem:omin_equiv}.
\end{proof}

Thanks to Proposition \ref{prop:omin_equiv}, if $\mathcal M$ is o-minimal structure, we may investigate the set of self-sufficient sets $\mySelf(\mathcal M)$ instead of the set of dimension functions $\mySet(\mathcal M)$ in order to compute $\myNum(\mathcal M)$.
The following is the basic properties of self-sufficient sets.

\begin{lemma}\label{lem:self_sufficient}
	Let $\mathcal M=(M,<,\ldots)$ be an o-minimal structure.
	Let $I, I_1, I_2 \in \myDef_1(\mathcal M)$.
	\begin{enumerate}
		\item[(1)] $M \setminus I$ is self-sufficient if $I$ is so and $M \setminus I$ is infinite.
		\item[(2)] $I_1 \cap I_2$ is self-sufficient if $I_i$ is so for $i=1,2$ and $I_1 \cap I_2$ is infinite.
		\item[(3)] $I_1 \cup I_2$ is self-sufficient if $I_i$ is so for $i=1,2$.
	\end{enumerate}
\end{lemma}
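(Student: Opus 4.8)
The plan is to reduce all three parts to the functional characterization of self-sufficiency recorded in Lemma \ref{lem:self}: an infinite definable $I \subseteq M$ is self-sufficient precisely when every definable function from a definable subset of $I$ into $M \setminus I$ has finite image. Working with this characterization, rather than directly with definable bijections, lets me treat the Boolean operations by partitioning the domain and invoking the hypothesis piece by piece.

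For part (1), I observe that the defining condition of self-sufficiency is symmetric in $I$ and $M \setminus I$: it asserts the nonexistence of a definable bijection between an infinite definable $J_1 \subseteq I$ and an infinite definable $J_2 \subseteq M \setminus I$, and a bijection $J_1 \to J_2$ exists if and only if its inverse $J_2 \to J_1$ does. Since $M \setminus (M \setminus I) = I$, the self-sufficiency of $M \setminus I$ is literally the same statement as that of $I$, so once $M \setminus I$ is assumed infinite the conclusion is immediate.

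For part (2), I take a definable $J \subseteq I_1 \cap I_2$ and a definable $f \colon J \to M \setminus (I_1 \cap I_2)$, aiming to show $f(J)$ is finite. Writing $M \setminus (I_1 \cap I_2) = (M \setminus I_1) \cup (M \setminus I_2)$, I set $A := f^{-1}(M \setminus I_1)$ and $B := f^{-1}(M \setminus I_2)$, so that $J = A \cup B$. Since $A \subseteq J \subseteq I_1$ and $f|_A$ maps into $M \setminus I_1$, the self-sufficiency of $I_1$ together with Lemma \ref{lem:self} gives that $f(A)$ is finite; symmetrically $f(B)$ is finite using $I_2$. Hence $f(J) = f(A) \cup f(B)$ is finite, and as $I_1 \cap I_2$ is infinite by hypothesis, Lemma \ref{lem:self} yields that it is self-sufficient. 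Part (3) is dual: for a definable $f \colon J \to M \setminus (I_1 \cup I_2)$ with $J \subseteq I_1 \cup I_2$, I partition $J = (J \cap I_1) \cup (J \cap I_2)$ and use that the codomain $M \setminus (I_1 \cup I_2) = (M \setminus I_1) \cap (M \setminus I_2)$ is contained in both $M \setminus I_1$ and $M \setminus I_2$, so each restriction again has finite image by Lemma \ref{lem:self}; here $I_1 \cup I_2 \supseteq I_1$ is infinite, so the characterization applies.

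I do not expect a genuine obstacle; the only point needing care is bookkeeping the De Morgan identities so that each restricted function lands in the correct complement $M \setminus I_i$, which is exactly what allows the self-sufficiency hypothesis on $I_i$ to be applied. The substantive content is entirely absorbed into Lemma \ref{lem:self}, and the argument is a routine closure computation once the domain is split correctly.
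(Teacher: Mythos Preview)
Your proof is correct. For parts (1) and (2) your argument via Lemma \ref{lem:self} is essentially the paper's argument rephrased through the functional characterization: the paper works directly with a hypothetical definable bijection and partitions its \emph{codomain} $J_2$ into pieces lying in $M\setminus I_1$ and $M\setminus I_2$, while you partition the \emph{domain} by preimages --- the same decomposition viewed from the other side. For part (3) your route genuinely differs. You argue directly by splitting the domain as $(J\cap I_1)\cup(J\cap I_2)$ and noting the codomain $M\setminus(I_1\cup I_2)$ sits inside each $M\setminus I_i$. The paper instead reduces (3) to (1) and (2) via De~Morgan: it first disposes of the case where $M\setminus(I_1\cup I_2)$ has empty interior (trivially self-sufficient), and otherwise writes $I_1\cup I_2 = M\setminus\bigl((M\setminus I_1)\cap(M\setminus I_2)\bigr)$, applies (1) to each $I_i$, then (2) to the intersection, then (1) once more. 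Your direct argument is shorter and avoids both the case distinction and the need to check that the intermediate complements are infinite; the paper's reduction has the mild aesthetic advantage of deriving (3) formally from the earlier parts.
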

\begin{proof}
	(1) The proof is easy. We omit it.
	
	(2) Let $J_1 \subseteq I_1 \cap I_2$ be an infinite definable set.
	Let $J_2 \subset M \setminus (I_1 \cap I_2)$ be an infinite definable set.
	By the definition of self-sufficient sets, we only have to show that there does not exist a definable bijection between $J_1$ and $J_2$.
	Assume for contradiction that there exists a definable bijection $f:J_1 \to J_2$.
	Put $K_1:= J_2 \cap I_2$ for $i=1,2$ and $K_2:=J_2 \setminus K_1$.
	Observe that $K_i \subseteq M \setminus I_i$ for $i=1,2$.
	At least one of $K_1$ and $K_2$ is infinite.
	Let $k \in \{1,2\}$ such that $K_k$ is infinite.
	The restriction $f|_{f^{-1}(K_k)}:f^{-1}(K_k) \to K_k$ is a definable bijection between infinite definable sets $f^{-1}(K_k)$ and $K_k$, which contradicts that $I_k$ is self-sufficient.
	
	(3) If $M \setminus (I_1 \cup I_2)$ has an empty interior, it is self-sufficient.
	The remaining case follows from (1) and (2) of this lemma.
\end{proof}

Let $\mathcal M=(M,<,\ldots)$ be a o-minimal structure.
A self-sufficient definable set $I$ is called \textit{minimally self-sufficient} if, for every self-sufficient set $X$, either $I \cap X$ is a (possibly empty) finite set or there exists a finite subset $F$ of $M$ such that $I \subseteq X \cup F$. 
A finite family $\mathcal C=\{C_1,\ldots, C_m\}$ of minimally self-sufficient sets called a \textit{generating system} if the following conditions are satisfied:
\begin{itemize}
	\item $C_i \cap C_j$ is a (possibly empty) finite set for $i \neq j$;
	\item $M \setminus \bigcup_{i=1}^m C_i$ is a finite set.
\end{itemize} 

\begin{lemma}\label{lem:omin_finite1}
	Let $\mathcal M=(M,<,\ldots)$ be an o-minimal structure.
	Suppose $|\mySelf(\mathcal M)|<\infty$.
	Then, $\mathcal M$ has a generating system $\{C_1, \ldots, C_m\}$ of minimally self-sufficient sets.  
\end{lemma}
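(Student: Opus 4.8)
The plan is to identify the minimally self-sufficient sets with the atoms of a naturally defined finite poset on $\mySelf(\mathcal M)$, and then to show that these atoms cover $M$ up to a finite set.

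First I would endow $\mySelf(\mathcal M)$ with the partial order given by $[I] \leq [J]$ whenever $I \setminus J$ is finite. This is well-defined on $\myfineq$-classes and antisymmetric, so by the hypothesis $|\mySelf(\mathcal M)| < \infty$ it makes $\mySelf(\mathcal M)$ a finite poset; in particular every class lies above a minimal one. Note that $M \setminus M = \emptyset$, so $M$ is itself self-sufficient and the poset is nonempty. The key observation I would then establish is that a self-sufficient set $I$ is \emph{minimally self-sufficient} exactly when $[I]$ is a minimal element of this poset. For one direction, if $[I]$ is minimal and $X$ is any self-sufficient set with $I \cap X$ infinite, then $I \cap X$ is self-sufficient by Lemma \ref{lem:self_sufficient}(2) and satisfies $[I \cap X] \leq [I]$, so minimality forces $[I \cap X] = [I]$, i.e.\ $I \setminus X$ is finite; this is precisely the defining dichotomy. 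Conversely, if $I$ is minimally self-sufficient and $J$ is self-sufficient with $[J] \leq [I]$, then $I \cap J$ is infinite, so the dichotomy applied to $X = J$ gives $I \setminus J$ finite and hence $[I] = [J]$.

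With this identification in hand, I would let $C_1, \ldots, C_m$ be representatives of the finitely many minimal classes (there are finitely many since $\mySelf(\mathcal M)$ is finite). Pairwise almost-disjointness is immediate: if $C_i \cap C_j$ were infinite for some $i \neq j$, then $C_i \cap C_j$ would be self-sufficient by Lemma \ref{lem:self_sufficient}(2) with $[C_i \cap C_j]$ below both $[C_i]$ and $[C_j]$, forcing $[C_i] = [C_j]$ and contradicting distinctness; hence each $C_i \cap C_j$ is finite. It remains to show that $U := \bigcup_{i=1}^m C_i$ is cofinite. Here I would argue by contradiction. The set $U$ is self-sufficient by Lemma \ref{lem:self_sufficient}(3), so if $M \setminus U$ were infinite it would be self-sufficient by Lemma \ref{lem:self_sufficient}(1). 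The finite poset then supplies a minimal class below $[M \setminus U]$, that is, a minimally self-sufficient $C$ with $C \setminus (M \setminus U)$ finite, whence $C \cap U$ is finite. But $[C]$ is one of the minimal classes, so $C \myfineq C_i$ for some $i$; since $C_i \subseteq U$, the set $C \cap U$ contains the infinite set $C \cap C_i$, a contradiction. Therefore $M \setminus U$ is finite, and $\{C_1, \ldots, C_m\}$ is the desired generating system.

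I expect the covering step to be the main obstacle, as it is the only place where the three closure properties of Lemma \ref{lem:self_sufficient} must be orchestrated together: union to see that $U$ is self-sufficient, complementation to promote an infinite $M \setminus U$ to a self-sufficient set, and intersection both to force pairwise finiteness and to extract the final contradiction from an atom sitting below $M \setminus U$. The preliminary identification of minimally self-sufficient sets with poset atoms is routine once the order is set up correctly, but it is essential for making the finiteness hypothesis usable.
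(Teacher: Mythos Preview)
Your proof is correct and follows essentially the same route as the paper's: both arguments use Lemma~\ref{lem:self_sufficient}(2) to get pairwise finite intersections of the minimal classes, and Lemma~\ref{lem:self_sufficient}(3),(1) together with the existence of a minimal class below $[M\setminus U]$ to derive the covering contradiction. The only presentational difference is that you explicitly set up the partial order on $\mySelf(\mathcal M)$ and characterize minimally self-sufficient sets as its minimal elements, which lets you replace the paper's descending-chain argument (Claim~1) by the immediate observation that a finite poset has minimal elements below every point; this is a clean repackaging rather than a different idea.
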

\begin{proof}
	We denote $\mySelf(\mathcal M)$ by $\mySelf$ for short.
	We write the equivalence class of $X \in \myDef_1(\mathcal M)$ under the equivalence relation $\myfineq$ by $[X]$. 
	For any $X_1,X_2 \in \myDef_1(\mathcal M)$, we write $X_1 \preceq X_2$ if there exists a finite subset $F$ of $M$ such that $X_1 \subseteq X_2 \cup F$.
	\medskip
	
	\textbf{Claim 1.} For any $[X] \in \mySelf$, there exists a minimally self-sufficient set $Z$ such that $Z \preceq X$.
	\begin{proof}[Proof of Claim 1]
		Assume for contradiction that such a minimally self-sufficient set does not exist.
		Let $X_1 = X$.
		Since $X_1$ is not minimally self-sufficient, there exists $[Y_1] \in \mathfrak D$ such that $X_2:=X_1 \cap Y_1$ is infinite and $X_1 \not\preceq Y_1$.
		$X_2$ has a nonempty interior because of o-minimality.
		We get $[X_2] \in \mySelf$ by Lemma \ref{lem:self_sufficient}(2).
		Observe that $X_1 \not\myfineq X_2$ and $X_2 \subseteq X_1$.
		In the same manner, we can construct infinitely many definable subsets $X_1,X_2,\ldots$ of $M$ such that $[X_i] \in \mySelf$, $X_i \not\myfineq X_{i+1}$ and $X_i \subseteq X_{i+1}$ for $i \geq 1$.
		We can easily show that $X_i \not\myfineq X_j$ for $i \neq j$.
		This contradicts the assumption that $\mySelf$ is a finite set.
	\end{proof}

	Let $\{[C_i]\;|\;1 \leq i \leq m\}$ be the family of the equivalence class of minimally self-sufficient sets, which exists thanks to Claim 1.
	First, $C_i \cap C_j$ is a finite set for $i \neq j$.
	In fact, if $C_i \cap C_j$ is an infinite set, $[C_i \cap C_j] \in \mySelf$ by Lemma \ref{lem:self_sufficient}(2).
	By minimality, we get $[C_i]=[C_i \cap C_j]$ and $[C_i \cap C_j]=[C_j]$.
	This implies $[C_i]=[C_j]$, which is absurd.
	Next, we show $M \setminus (\bigcup_{i=1}^m C_i)$ is a finite set.
	Assume for contradiction that $M \setminus (\bigcup_{i=1}^m C_i)$ is an infinite set.
	We have $[\bigcup_{i=1}^m C_i] \in \mySelf$ by Lemma \ref{lem:self_sufficient}(3).
	Since $M \setminus (\bigcup_{i=1}^m C_i)$ has a nonempty interior, Lemma \ref{lem:self_sufficient}(1) implies $[M \setminus (\bigcup_{i=1}^m C_i)] \in \mySelf$.
	By Claim 1, there exists a minimal element $Z$ such that $Z \preceq M \setminus (\bigcup_{i=1}^m C_i)$, which is absurd.
	We have shown that  $\mathcal M$ has a generating system of minimally self-sufficient sets.  
\end{proof}

\begin{lemma}\label{lem:omin_finite2}
	Let $\mathcal M=(M,<,\ldots)$ be an o-minimal structure having a generating system $\{C_1, \ldots, C_m\}$ of minimally self-sufficient sets.  
	Then, $|\mySelf(\mathcal M)|=2^m-1$.
\end{lemma}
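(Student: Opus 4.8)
The plan is to exhibit an explicit bijection between the nonempty subsets of $\{1,\dots,m\}$ and $\mySelf(\mathcal M)$, given by $S \mapsto \left[\bigcup_{i\in S}C_i\right]$, where $[\,\cdot\,]$ denotes the $\myfineq$-class. Since there are exactly $2^m-1$ nonempty subsets of an $m$-element set, this immediately yields $|\mySelf(\mathcal M)|=2^m-1$. So the work splits into three parts: the map is well defined (lands in $\mySelf(\mathcal M)$), it is injective, and it is surjective.

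For well-definedness, I would note that each $C_i$ is self-sufficient by hypothesis, so by repeated application of Lemma \ref{lem:self_sufficient}(3) the finite union $\bigcup_{i\in S}C_i$ is self-sufficient; for nonempty $S$ this union is infinite, hence its class lies in $\mySelf(\mathcal M)$. For injectivity, suppose $S\neq T$ and pick $j$ in their symmetric difference, say $j\in S\setminus T$. Then $C_j\subseteq\bigcup_{i\in S}C_i$, while the generating-system condition $C_j\cap C_i$ finite for $i\neq j$ gives that $C_j\cap\bigcup_{i\in T}C_i=\bigcup_{i\in T}(C_j\cap C_i)$ is finite; since $C_j$ is infinite, $C_j\setminus\bigcup_{i\in T}C_i$ is infinite. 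As this set is contained in the symmetric difference of $\bigcup_{i\in S}C_i$ and $\bigcup_{i\in T}C_i$, the two unions are not $\myfineq$-equivalent, so the corresponding classes differ.

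Surjectivity is the crux, and the step I expect to be the main obstacle, since it is where both defining properties of a generating system and the dichotomy built into minimal self-sufficiency must be combined. Given a self-sufficient set $X$, I would put $S:=\{i\;|\;C_i\cap X\text{ is infinite}\}$. For $i\in S$, minimal self-sufficiency of $C_i$ applied to the self-sufficient set $X$ rules out the first alternative, so there is a finite $F_i$ with $C_i\subseteq X\cup F_i$; hence $C_i\setminus X$ is finite, and therefore $\left(\bigcup_{i\in S}C_i\right)\setminus X$ is finite. For the reverse containment, the generating condition gives a finite set $G$ with $M\subseteq\bigcup_{i=1}^m C_i\cup G$, so $X\setminus\bigcup_{i\in S}C_i$ is contained in $\bigcup_{i\notin S}(X\cap C_i)\cup(X\cap G)$, which is finite because $X\cap C_i$ is finite for each $i\notin S$. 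Thus $X\myfineq\bigcup_{i\in S}C_i$, and $S$ is nonempty since $X$ is infinite. This shows every class is hit, completing the bijection and hence the count $|\mySelf(\mathcal M)|=2^m-1$.
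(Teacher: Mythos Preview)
Your proof is correct and follows essentially the same approach as the paper: both define the map $S\mapsto\left[\bigcup_{i\in S}C_i\right]$ from nonempty subsets of $\{1,\dots,m\}$ to $\mySelf(\mathcal M)$, verify well-definedness via Lemma~\ref{lem:self_sufficient}(3), deduce injectivity from the pairwise-finite-intersection condition, and prove surjectivity by setting $S=\{i\;|\;C_i\cap X\text{ infinite}\}$ and using minimal self-sufficiency together with $M\myfineq\bigcup_{i=1}^m C_i$. Your write-up simply spells out the details (particularly for injectivity and the two containments in surjectivity) more explicitly than the paper does.
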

\begin{proof}
	We denote $\mySelf(\mathcal M)$ by $\mySelf$ for short.
	Consider the map $$\iota:\{\emptyset \neq \mathcal I \subseteq \{1,2,\ldots,m\}\} \to \mySelf$$ given by $\iota(\mathcal I)=[\bigcup_{i \in \mathcal I}C_i]$.
	Recall that $[X]$ is the equivalence class of $X \in \myDef_1(\mathcal M)$ under $\myfineq$.
	Observe that $[\bigcup_{i \in \mathcal I}C_i]$ is an element of $\mySelf$ by Lemma \ref{lem:self_sufficient}(3).
	$\iota$ is injective because $C_i \cap C_j$ is a finite set for $i \neq j$.
	We show that $\iota$ is surjective.
	Let $[X] \in \mySelf$.
	Let $\mathcal I:=\{1 \leq i \leq m\;|\; X \cap C_i \text{ is an infinite set}\}$.
	By minimality of $C_i$, we have $X \cap C_i \myfineq C_i$ for $i \in \mathcal I$.
	The relation $X \myfineq \bigcup_{i \in \mathcal I}C_i$ follows from the relation $M \myfineq \bigcup_{i=1}^m C_i$ and minimality of $C_i$.
	We have shown that $[X]=\iota(\mathcal I)$, which means the surjectivity of the map $\iota$.
	
	We have $|\mySelf|=|\{\emptyset \neq \mathcal I \subseteq \{1,2,\ldots,m\}\}|=2^m-1$.
\end{proof}

The following theorem is easily deduced from what we have proved.
\begin{theorem}\label{thm:omin_finite}
	Let $\mathcal M$ be an o-minimal structure such that $\myNum(\mathcal M)$ is finite.
	Then there exist a positive integer $m$ such that $\myNum(\mathcal M)=2^m-1$.
\end{theorem}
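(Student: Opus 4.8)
The plan is to assemble the three structural results about o-minimal structures that have already been established, since the theorem is a direct consequence of them. The first move is to pass from the set of dimension functions to the combinatorially accessible set of self-sufficient sets. Concretely, Proposition \ref{prop:omin_equiv} provides a one-to-one correspondence between $\mySet(\mathcal M)$ and $\mySelf(\mathcal M)$, so that $\myNum(\mathcal M)=|\mySelf(\mathcal M)|$. Under this identification the hypothesis that $\myNum(\mathcal M)$ is finite is precisely the statement that $|\mySelf(\mathcal M)|<\infty$, which is the running assumption of the subsequent lemmas.

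With finiteness of $|\mySelf(\mathcal M)|$ in hand, I would next extract a generating system and then count. Lemma \ref{lem:omin_finite1} applies verbatim and yields minimally self-sufficient sets $C_1,\dots,C_m$ with pairwise finite intersections and with $M\setminus\bigcup_{i=1}^m C_i$ finite. Feeding this generating system into Lemma \ref{lem:omin_finite2} computes $|\mySelf(\mathcal M)|=2^m-1$, via the explicit bijection sending a nonempty $\mathcal I\subseteq\{1,\dots,m\}$ to $[\bigcup_{i\in\mathcal I}C_i]$. Chaining the two equalities gives $\myNum(\mathcal M)=|\mySelf(\mathcal M)|=2^m-1$, which is the desired conclusion.

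Since essentially all of the substance has been relegated to the preparatory lemmas, there is no genuine obstacle remaining in this final step: the real work lay in controlling the possibly-infinite descending construction of Lemma \ref{lem:omin_finite1} (to guarantee a \emph{finite} generating system out of the finiteness assumption) and in the clean power-set bookkeeping of Lemma \ref{lem:omin_finite2}. The one point I would take care to verify is that the exponent $m$ is a \emph{positive} integer. This holds because $\mySelf(\mathcal M)$ is nonempty: the set $M$ itself is self-sufficient (its complement $M\setminus M=\emptyset$ has empty interior, so the criterion is met vacuously), whence $[M]\in\mySelf(\mathcal M)$ and therefore $m\geq 1$. This also rules out the degenerate possibility $\myNum(\mathcal M)=0$ for o-minimal $\mathcal M$, consistently with the fact that the topological dimension is always a dimension function in this setting.
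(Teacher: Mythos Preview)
Your proposal is correct and matches the paper's proof exactly: the paper also simply invokes Proposition~\ref{prop:omin_equiv}, Lemma~\ref{lem:omin_finite1}, and Lemma~\ref{lem:omin_finite2} in the same order. Your additional remark that $m\geq 1$ (because $[M]\in\mySelf(\mathcal M)$) is a small but welcome clarification that the paper leaves implicit.
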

\begin{proof}
	We obtain this theorem from Proposition \ref{prop:omin_equiv}, Lemma \ref{lem:omin_finite1} and Lemma \ref{lem:omin_finite2}.
\end{proof}

Models of DLO are examples of o-minimal structures.
By Example \ref{ex:top_dim}, the topological dimension on $\mathcal M$ is a dimension function.
The following proposition says a model of DLO has infinitely many dimension functions.

\begin{proposition}\label{prop:DLO}
	The equality $$\myNum(\mathcal M)=|M|$$ holds for every model $\mathcal M=(M,<)$ of DLO.
\end{proposition}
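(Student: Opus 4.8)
The plan is to invoke the bijection between $\mySet(\mathcal M)$ and $\mySelf(\mathcal M)$ furnished by Proposition \ref{prop:omin_equiv}, which applies because every model of DLO is o-minimal, and thereby reduce the computation of $\myNum(\mathcal M)$ to counting the $\myfineq$-classes of self-sufficient subsets of $M$. For the upper bound $\myNum(\mathcal M) \leq |M|$, I would bound the number of definable unary sets outright: since the language of DLO is finite and $M$ is infinite, there are at most $\aleph_0 \cdot |M| = |M|$ formulas-with-parameters, so $|\myDef_1(\mathcal M)| \leq |M|$; as $\mySelf(\mathcal M)$ consists of equivalence classes of a subfamily of $\myDef_1(\mathcal M)$, this forces $|\mySelf(\mathcal M)| \leq |M|$.

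For the matching lower bound I would exhibit $|M|$ pairwise $\myfineq$-inequivalent self-sufficient sets, namely the half-lines $I_a := (-\infty,a)$ for $a \in M$. They are pairwise inequivalent because, for $a \neq a'$, the symmetric difference of $I_a$ and $I_{a'}$ contains an open interval and is hence infinite, so the classes $[I_a]$ are $|M|$ distinct elements of $\mySelf(\mathcal M)$, provided each $I_a$ is self-sufficient. Establishing this self-sufficiency is the main work and the main obstacle. By Lemma \ref{lem:self} it suffices to show that every definable $f : J \to M \setminus I_a = [a,\infty)$ with $J \subseteq I_a$ has finite image. Applying the monotonicity theorem \cite[Chapter 3, Theorem 1.2]{vdD}, off a finite set $f$ is piecewise constant or piecewise strictly monotone and continuous, and the point is to rule out the strictly monotone pieces. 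Here I would use quantifier elimination for DLO: on a small subinterval the graph of $f$ is quantifier-free definable, so each fiber is a Boolean combination of the conditions $y=x$, $y=c$, $y<c$, $y>c$ for parameters $c$; the only way such a fiber can be a single point varying with $x$ is that it is cut out by $y=x$. Thus a strictly monotone definable $f$ is locally the identity, so its image lies in the same interval $(-\infty,a)$ as its domain, contradicting $f(J)\subseteq[a,\infty)$. Hence each piece is constant and $f(J)$ is finite, so $I_a$ is self-sufficient.

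Combining the two bounds with the cardinal arithmetic $1 + \aleph_0 \cdot |M| = |M|$ then yields $\myNum(\mathcal M) = |\mySelf(\mathcal M)| = |M|$. I expect the constant-versus-identity analysis of definable functions in pure DLO to be the only nonroutine ingredient; the counting and the inequivalence of the half-lines are straightforward once that analysis is in place.
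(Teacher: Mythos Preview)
Your proposal is correct and follows essentially the same approach as the paper: both arguments exhibit half-lines as self-sufficient sets (you use $(-\infty,a)$, the paper uses $(a,\infty)$) and establish self-sufficiency via the monotonicity theorem together with quantifier elimination for DLO, observing that on the relevant product region the atomic formulas comparing $x$ and $y$ become constant, so the graph of any definable function reduces to a finite union of boxes and cannot be strictly monotone. The only organizational difference is that you route the counting through Proposition~\ref{prop:omin_equiv} and bound $|\mySelf(\mathcal M)|$ by $|\myDef_1(\mathcal M)|$, whereas the paper builds an explicit injection $\mySet(\mathcal M)\to\myDef_2(\mathcal M)$ directly from Lemma~\ref{lem:omin_basic}; your route is arguably cleaner since it reuses the structural bijection already in hand. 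One small imprecision: your list of fiber conditions omits $y<x$ and $y>x$, but since $y>x$ holds identically on $J\times[a,\infty)$ these are constant anyway, so the conclusion stands.
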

\begin{proof}
	DLO admits quantifier elimination (cf.\ \cite[Theorem 3.3.2]{TZ}), and DLO is an o-minimal theory.
	
	Put $I_a:=\{x \in M\;|\;x>a\}$.
	We want to show that the function $\mydim[I_a]$ defined in Definition \ref{def:new_dim} is a dimension function.
	We have only to prove that $I_a$ is self-sufficient by Lemma \ref{lem:omin_equiv}.
	Assume for contradiction there exist nonempty open intervals $J_1 \subseteq I_a$, $J_2 \subseteq M \setminus I_a$ and a definable bijection $f:J_1 \to J_2$.
	By the monotonicity theorem \cite[Chapter 3, Theorem 1.2]{vdD}, we may assume that $f$ is continuous and monotone by shrinking $J_1$ and $J_2$ if necessary.
	At every point in $I_a \times (M \setminus I_a)$, the formulas $x_1<x_2$ and $x_1=x_2$ fail, and the formula $x_1>x_2$ holds, where $x_1$ and $x_2$ are free variables. 
	Since DLO admits quantifier elimination and the graph of $f$ is contained in $I_a \times (M \setminus I_a)$, there exist finitely many formulas $\phi_{ij}$ of the forms $$x_i \square a,$$ where $i=1,2$, $\square \in \{<,>,=\}$ and $a \in M$, such that the graph of $f$ is given by the formula $\bigvee_{i=1}^n \bigwedge_{j=1}^{m_i}\phi_{ij}$.
	It is easy to show that this formula cannot define the graph of a strictly monotone function, which is absurd.
	We omit the details.
	\medskip
	
	We prove that $\myNum(\mathcal M)=|M|$.
	Since $\mydim[I_a] \in \mySet(\mathcal M)$ for $a \in M$, we have $|M| \leq \myNum(\mathcal M)$.
	We construct an injective map $\iota:\mySet(\mathcal M) \to \myDef_2(\mathcal M)$.
	Let $\dim$ be a dimension function on $\mathcal M$.
	We define $\iota(\dim) \in \myDef_2(\mathcal M)$ as follows:
	\begin{align*}
		&\iota(\dim):=\{(a,b) \in M^2\;|\; \dim ((a,b))=0\}.
	\end{align*}
	Observe that $\iota(\dim)$ is a definable set by Definition \ref{def:dimension}(4).
	Lemma \ref{lem:omin_basic} implies that $\iota$ is injective.
	$\myDef_2(\mathcal M)$ is of the cardinality smaller than or equal to the cardinality of the set of finite sequences of elements from the union of $M$ with the finite set consisting of logical symbols, two variables and $<$. 
	Therefore, we have $|\myDef_2(\mathcal M)|=|\mathcal M|$ (under the assumption of the axiom of choice).
	We have $\myNum(\mathcal M)=|M|$.
\end{proof}

We give an o-minimal example such that $\myNum(\mathcal M)=2^{m+1}-1$ for every positive integer $m$.
This is a concatenation of o-minimal structures $\mathcal M_1,\ldots, \mathcal M_m$ with $\myNum(\mathcal M_i)=1$ for $1 \leq i \leq m$.
\begin{proposition}\label{prop:omin_finite_example}
	For $m>0$, let $\mathcal L_m$ be the expansion of the language of ordered groups $\{<,+\}$ by $m-1$ constant symbols $c_1,\ldots,c_{m-1}$.
	Put $c_0=-\infty$ and $c_m=\infty$.
	Let $T_m$ be the $\mathcal L_m$-theory describing the following:
	\begin{itemize}
		\item The axiom of DLO;
		\item $c_{i-1}<c_i$ for $1<i<m$
		\item For every $1 \leq i \leq m$, $((c_{i-1},c_i),+|_{\prod_{j=1}^2(c_i,c_{i+1})})$ is a divisible Abelian group; 
		\item The restriction of the function $+$ off $\bigcup_{i=1}^m \prod_{j=1}^2 (c_{i-1},c_i)$ is constantly equal to $c_1$.
	\end{itemize}
	Then, $T_m$ is a consistent o-minimal theory, that is, every model of $T_m$ is  o-minimal, and the equality $$\myNum(\mathcal M)=2^{m+1}-1$$ holds for every model $\mathcal M$ of $T_m$.
\end{proposition}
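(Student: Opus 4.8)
The plan is to treat the three claims—consistency, o-minimality of every model, and the cardinality formula—separately, reducing the last to a count of minimally self-sufficient sets via Proposition \ref{prop:omin_equiv} and Lemma \ref{lem:omin_finite2}. For consistency I would simply build a model: fix an ordered divisible Abelian group and realize $m$ disjoint copies of it on consecutive open intervals of a densely ordered set, interpret the $c_i$ as the separating points, declare $+$ to be the group operation of the $i$-th copy on $(c_{i-1},c_i)^2$ and constantly $c_1$ on the remaining part of $M^2$. This structure satisfies every axiom of $T_m$, so $T_m$ is consistent.

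The first substantial step is o-minimality. The driving observation is that $+$ is ``block diagonal'': since its value off $\bigcup_i (c_{i-1},c_i)^2$ is the constant $c_1$, any term evaluated at a tuple whose entries do not all lie in one common block $(c_{i-1},c_i)$ collapses to one of the constants $c_1,\dots,c_{m-1}$. Naming the (already definable) group identity $e_i$ of each block, I would establish quantifier elimination and thereby show that every definable subset of $M$ is a Boolean combination of block-internal definable sets together with finitely many of the points $c_i$. Each block is an o-minimal ordered divisible Abelian group, so its internal definable subsets are finite unions of points and intervals; hence so is every definable subset of $M$, which is o-minimality. I expect this quantifier-elimination/cell-analysis step, carried out uniformly across blocks, to be the most technical part of the argument.

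With o-minimality secured, Proposition \ref{prop:omin_equiv} gives $\myNum(\mathcal M)=|\mySelf(\mathcal M)|$, so it remains to enumerate the $\myfineq$-classes of self-sufficient sets and to exhibit a generating system of minimally self-sufficient sets; Lemma \ref{lem:omin_finite2} then converts the size of that system into the value in the statement, uniformly in $\mathcal M$ since the count depends only on $T_m$. Using the block-diagonal collapse together with the monotonicity theorem, I would show that any definable function from an infinite subset of one block into a different block is piecewise constant, so by Lemma \ref{lem:self} each block is self-sufficient and no definable bijection exists between pieces of distinct blocks. Within a single block the structure is homogeneous, the group translations and scalings acting as in the one-group case governed by Theorem \ref{thm:unique}(a), so each block contributes as a single unit.

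The delicate point, and the one I expect to require the most bookkeeping, is the exact enumeration of the minimally self-sufficient classes. One must prove that each member of the proposed generating system is genuinely minimal—intersecting an arbitrary self-sufficient set with a block and invoking Lemma \ref{lem:self_sufficient}, exactly as in the proof of Lemma \ref{lem:omin_finite1}—and, crucially, that no self-sufficient class has been overlooked, so that the cardinality of the generating system is precisely the exponent demanded by the statement. Once this enumeration is pinned down, Lemma \ref{lem:omin_finite2} yields $\myNum(\mathcal M)=2^{m+1}-1$ for every model $\mathcal M$ of $T_m$.
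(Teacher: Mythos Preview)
Your approach is essentially the same as the paper's: build an explicit model for consistency, establish quantifier elimination in the language expanded by rational scalar multiplications to obtain o-minimality, and then show that the blocks $C_i=(c_{i-1},c_i)$ form a generating system of minimally self-sufficient sets so that Proposition~\ref{prop:omin_equiv} and Lemma~\ref{lem:omin_finite2} give the count. The only minor deviation is that the paper proves minimality of each $C_i$ by an explicit translation $x\mapsto x + c$ inside the block (rather than by citing Lemma~\ref{lem:omin_finite1} or Theorem~\ref{thm:unique}), which is precisely the ``homogeneity'' you allude to.
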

\begin{proof}
	First we show $T_m$ is a consistent theory.
	We construct a model $\mathcal M=(M,<^{\mathcal M},+^{\mathcal M},c_1^{\mathcal M},\ldots, c_{m-1}^{\mathcal M})$ of $T_m$.
	Let $Q_i:=\mathbb Q$ for $1 \leq i \leq m$ and let $S_i$ be singletons for $1 \leq i < m$.
	Put $$M:=Q_1 \sqcup S_1 \sqcup Q_2 \sqcup S_2 \sqcup \cdots \sqcup S_{m-1} \sqcup Q_m,$$
	where $\sqcup$ represents disjoint unions.
	We interpret the symbols $c_i$, $<$ and $+$ as follows:
	\begin{itemize}
		\item $c_i^{\mathcal M} \in S_i$ for $1 \leq i <m$.
		\item $c_i^{\mathcal M}<^{\mathcal M}c_j^{\mathcal M}$ if and only if $1 \leq i<j < m$;
		\item For $a \in Q_i$, $a <^{\mathcal M} c_j^{\mathcal M}$ if and only if $1 \leq i \leq j<m$;
		\item For $a \in Q_i$ and $b \in Q_j$, $a<^{\mathcal M}b$ if and only if $i=j$ and $a<b$ as elements in $\mathbb Q$ or $i<j$;
		\item If $a,b \in Q_i$, $a+^{\mathcal M}b = a+_{\mathbb Q}b \in Q_i$, where $+_{\mathbb Q}$ is the addition in $\mathbb Q$;
		\item Otherwise, we put $a+^{\mathcal M}b=c_1^{\mathcal M}$.
	\end{itemize}
	It is easy to check that $\mathcal M=(M,<^{\mathcal M},+^{\mathcal M},c_1^{\mathcal M},\ldots, c_{m-1}^{\mathcal M})$ is a model of $T_m$.
	
	Observe that every divisible Abelian group is naturally a $\mathbb Q$-vector space.
	We expand $\mathcal L_m$ by the collection of unary function symbols $\{\lambda_q\}_{q \in \mathbb Q}$ and interpret $\lambda_q$ as the multiplication by a rational $q$ on $(c_{i-1},c_i)$ for $1 \leq i \leq m$.
	Let $\mathcal L_m'$ be the expansion denoted of $\mathcal L_m$.
	Let $T'_m$ be the extension of the theory $T_m$ adding the sentences $\lambda_q(c_i)=c_1$ and $\forall x  \in (c_{j-1},c_j)\ \lambda_q(x)=q \cdot x$ for every $q \in \mathbb Q$, $1 \leq i \leq m-1$ and $1 \leq j \leq m$.
	
	We want to show that $T_m'$ admits quantifier elimination.
	Let $\mathcal N_j$ be a models of $T_m'$ for $j=1,2$.
	Let $\mathcal A$ be their common substructure.
	We denote the universes of $\mathcal N_1$, $\mathcal N_2$ and $\mathcal A$ by $N_1$, $N_2$ and $A$, respectively.
	Let $\phi$ be an arbitrary primitive existential formula with parameters from $A$.
	In other words, there exist finitely many basic formulas $\phi_1(x), \ldots, \phi_l(x)$ with the single free variable $x$ such that $$\phi = \exists x\ \bigwedge_{i=1}^l \phi_i(x).$$
	By a QE test (cf.\ \cite[Theorem 3.2.5]{TZ}), we have only to show that $\mathcal N_2 \models \phi$ whenever $\mathcal N_1 \models \phi$.
	
	Let $n_1 \in N_1$ such that $\mathcal N_1 \models \bigwedge_{i=1}^l \phi_i(n_1)$.
	We want to find $n_2 \in N_2$ such that $\mathcal N_2 \models \bigwedge_{i=1}^l \phi_i(n_2)$.
	If $n_1=c_k^{\mathcal N_1}$ for some $1 \leq k <m$, $n_2:=c_k^{\mathcal N_2}$ satisfies the given formula.
	
	Let us consider the case in which $n_1 \in (c_{k-1},c_k)$ for some $1 \leq k \leq m$. 
	By considering $\bigwedge_{i=1}^l \phi_i(x) \wedge (x>c_{k-1}) \wedge (x<c_k)$ instead of $\bigwedge_{i=1}^l \phi_i(x)$, we may assume that every $x$ satisfying the formula $\bigwedge_{i=1}^l \phi_i(x)$ belongs to the interval $(c_{k-1},c_k)$.
	Since we may assume that $x \in (c_{k-1},c_k)$, every basic formula $\phi_i(x)$ for $1 \leq i \leq l$ is equivalent to one of the following forms or their negation:
	\begin{enumerate}
		\item[(*)] $x *_i a_i$, where $a_i \in A$ and $*_i \in \{=, <, >\}$;
	\end{enumerate}
	We consider the following three separate cases.
	\medskip
	
	\textbf{Case 1.} Suppose $n_1 \in A$. Put $n_2=n_1$ in this case.
	\medskip
	
	\textbf{Case 2.} Suppose $n_1 \notin A$.
	Enumerate elements in $A \cap (c_{k-1},c_k)$ appearing in the formulas $\phi_i(x)$ in the increasing order, say $a_1,\ldots,a_t$.
	If $n_1 <a_1$, choose an element $n_2 \in N_2$ with $c_{k-1}<n_2<a_1$, which is possible because $(c_{k-1},c_k)$ is a model of DLO.
	If $n_1>a_t$, choose an element $n_2 \in N_2$ with $a_t<n_2<c_k$.
	Finally, if there exists $1 < p \leq t$ such that $a_{p-1}<n_1<a_p$, put $n_2=(a_{p-1}+a_p)/2$.
	We have $\mathcal N_2 \models \bigwedge_{i=1}^l \phi_i(n_2)$ in every case.
	We have shown that $T'_m$ admits quantifier elimination.
	\medskip
	
	Using this QE result, it is easy to show that $\mathcal M$ is o-minimal for every model $\mathcal M$ of $T_m$.
	
	Fix a model $\mathcal M=(M,<^{\mathcal M}, +^{\mathcal M},c_1^{\mathcal M},\ldots,c_{m-1}^{\mathcal M})$ of $T_m$.
	We drop the superscript $\mathcal M$ of the symbols from now on.
	Put $C_i:=(c_{i-1},c_i)$ for $1 \leq i \leq m$.
	We want to show the equality $\myNum(\mathcal M)=2^{m+1}-1$.
	For that purpose, we have only to prove that $\{C_i\;|\; 1 \leq i \leq m\}$ is a generating system of minimally self-sufficient sets by Proposition \ref{prop:omin_equiv} and Lemma \ref{lem:omin_finite2}.
	The relations $C_i \cap C_j=\emptyset$ for $1 \neq j$ and $M \myfineq \bigcup_{i=1}^m C_i$ obviously hold.
	The remaining task is to show that $C_i$ is a minimally self-sufficient set for each $1 \leq i \leq m$.
	For short, we denote $C_i$ by $C$.
	
	First we show that $C$ is self-sufficient.
	The proof is similar to that in Proposition \ref{prop:DLO}.
	Assume for contradiction there exist nonempty open intervals $J_1 \subseteq C$, $J_2 \subseteq M \setminus C$ and a definable bijection $f:J_1 \to J_2$.
	We have either $a <c$ for every $c \in C$ and $a \in J_2$ or $a >c$ for every $c \in C$ and $a \in J_2$.
	In the first case, we can lead to a contradiction completely in the same manner by replacing $I_a$ in the proof of Proposition \ref{prop:DLO} by $C$.
	The proof for the latter case is similar, and we omit the proofs.
	
	Finally, we show that $C$ is a minimally self-sufficient.
	Let $X$ be a self-sufficient set such that $X \cap C$ is an infinite set.
	We can take a bounded open interval $J_1=(a_1,a_2)$ contained in $X \cap C$ because $\mathcal M$ is o-minimal.
	We have only to show that $C \setminus X$ is a finite set.
	Assume for contradiction that $C \setminus X$ is an infinite set.
	By o-minimality, there exists a bounded open interval $J_2'=(b_1,b_2)$ contained in $C \setminus X$.
	We may assume that $a_2-a_1 \leq b_2-b_1$ by shrinking $J_1$ if necessary.
	Put $J_2:=((b_1+b_2)/2-(a_2-a_1)/2, (b_1+b_2)/2+(a_2-a_1)/2)$.
	Consider the definable bijection $f:J_1 \ni x \mapsto x+(b_1+b_2)/2-(a_1+a_2)/2 \in J_2$.
	This contradicts the assumption that $X$ is self-sufficient.
	We have shown that $C$ is a minimally self-sufficient.
\end{proof}

\subsection{Weakly o-minimal case}
Theorem \ref{thm:omin_finite} implies that there exist infinitely many positive integers $m$ such that there exist no o-minimal structures $\mathcal M$ such that $\myNum(\mathcal M)=m$.
In contrast to it, for every positive integer $m$, there exists a weakly o-minimal structure $\mathcal M$ such that $\myNum(\mathcal M)=m$.
Every nonvaluational weakly o-minimal expansion of an ordered field is an example of $\myNum(\mathcal M)=1$ by \cite{A}, \cite[Corollary 4.4]{MMS}, \cite[Theorem 4.2]{Wencel}, \cite[Theorem 2.15]{Wencel2} and Theorem \ref{thm:unique}(2).
We look for a weakly o-minimal structure $\mathcal M$ such that $\myNum(\mathcal M)=m$ with $m>1$.

\begin{theorem}\label{thm:weakly}
	For $m>0$, let $\mathcal L_m$ be the expansion of the language of ordered groups $\{<,+\}$ by $m$ unary predicates $U_1, \ldots, U_m$.
	Let $T_m$ be the $\mathcal L_m$-theory describing the followings:
	\begin{itemize}
		\item The axioms of ordered divisible Abelian group;
		\item The axioms saying that $U_m$ is a nontrivial convex subgroup of the universe;
		\item The axioms saying that $U_{k-1}$ is a nontrivial convex subgroup of $U_k$ for $k>1$.
	\end{itemize}
	Here, we identified the unary predicate $U_i$ with the set of realizations of $U_i$ for $1 \leq i \leq m$.
	Then, $T_m$ is a consistent weakly o-minimal theory, that is, every model of $T_m$ is weakly o-minimal, and the equality $$\myNum(\mathcal M)=m+1$$ holds for every model $\mathcal M$ of $T_m$.
\end{theorem}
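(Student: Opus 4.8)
The plan is to first settle consistency and weak o-minimality by a quantifier-elimination argument, and then to compute $\myNum(\mathcal M)$ by an induction on $m$ that peels off the smallest convex subgroup $U_1$. For consistency I would exhibit the lexicographically ordered group $\mathbb Q^{m+1}$ (most significant coordinate first), interpreting $U_k$ as the convex subgroup of tuples whose first $m+1-k$ coordinates vanish; this is an ordered divisible Abelian group carrying the required chain of nontrivial convex subgroups. For weak o-minimality I would prove that $T_m$ eliminates quantifiers in the language obtained by adjoining the scalar maps $\lambda_q:x\mapsto qx$ for $q\in\mathbb Q$, which are definable by divisibility. Running the QE test as in Proposition \ref{prop:omin_finite_example}: given $\mathcal N_1,\mathcal N_2\models T_m$ with a common substructure $\mathcal A$ and a primitive formula $\exists x\,\bigwedge_i\phi_i(x)$ realized in $\mathcal N_1$, every atomic formula in $x$ over $A$ is equivalent to one of $x=a$, $x<a$, $x>a$, or $x\in a+U_k$ with $a\in A$, and a witness in $\mathcal N_2$ is produced from the density and divisibility of the quotients $U_k/U_{k-1}$ and of $U_1$ (each a nontrivial divisible ordered group), exactly as one slides a point across an empty cut. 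Given QE, every definable $X\subseteq M$ is a Boolean combination of points, rays, and cosets of the convex $U_k$, hence a finite union of convex sets, so every model of $T_m$ is weakly o-minimal.

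Next I would record the structural facts the count rests on. Each $U_k$ is bounded (a proper convex subgroup is bounded) and open, and each of $U_1$, $U_k/U_{k-1}$ and $M/U_m$ is a nontrivial divisible, hence densely ordered, group. By QE the structure induced on $U_1$ is a pure o-minimal ordered divisible Abelian group, and the interpreted quotient $M/U_1$ is a model of $T_{m-1}$ with no extra induced structure. By Corollary \ref{cor:unbounded0}, weak o-minimality, and condition $(2)$ of Definition \ref{def:dimension}, a dimension function $\dim$ on $M$ is determined by its values on bounded convex definable sets, and $\dim(a+C)=\dim(qC)=\dim C$ by Proposition \ref{prop:dim}(2).

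The count itself proceeds by induction on $m$, the base $m=0$ being the uniqueness of the dimension function on an ordered divisible Abelian group (Theorem \ref{thm:unique}(a)). Let $\dim\in\mySet(\mathcal M)$; monotonicity forces $\dim U_1\in\{0,1\}$. If $\dim U_1=1$, the restriction of $\dim$ to subsets of $U_1$ is a dimension function on the o-minimal group $U_1$, so by Theorem \ref{thm:unique}(a) it is the topological dimension there and every nondegenerate subinterval of $U_1$ has $\dim=1$; any convex set not contained in a single $U_1$-coset contains a full $U_1$-coset (its image in the densely ordered $M/U_1$ has at least two points), hence also has $\dim=1$. Thus $\dim=\topdim$, and this is the unique dimension function with $\dim U_1=1$. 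If $\dim U_1=0$, then every convex set lying in a single $U_1$-coset has $\dim=0$, and I would descend $\dim$ to $\widetilde{\dim}(\bar X):=\dim(\rho^{-1}(\bar X))$ on $M/U_1$, where $\rho:M\to M/U_1$ is the quotient map (applied coordinatewise). Showing that $\widetilde{\dim}$ is a dimension function and that $\dim\mapsto\widetilde{\dim}$ is a bijection onto $\mySet(M/U_1)$ then yields, by the induction hypothesis, exactly $\myNum(M/U_1)=m$ dimension functions with $\dim U_1=0$. Combining the two cases gives $\myNum(\mathcal M)=1+m=m+1$.

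The \emph{main obstacle} is the descent step: verifying that $\widetilde{\dim}$ satisfies Definition \ref{def:dimension}, in particular the addition property $(4)_n$ on $M/U_1$, and that the correspondence is bijective in both directions. The crux is the identity $\dim(X+U_1)=\dim X$ when $\dim U_1=0$. I would obtain this by introducing the definable set $W:=\{(y,x)\in M^2\;|\;x\in X,\ y-x\in U_1\}$: its two coordinate projections onto $M$ have images $X+U_1$ and $X$, and every nonempty fiber of either projection lies in a single $U_1$-coset and so has dimension $0$, whence Proposition \ref{prop:dim}(3) gives $\dim(X+U_1)=\dim W=\dim X$. Because $\rho$ commutes with the coordinate projections $\Pi$, this transports $(4)_n$ between $M$ and $M/U_1$ and shows that $\widetilde{\dim}$ recovers $\dim$ on $\myDef_1(\mathcal M)$, giving injectivity via Proposition \ref{prop:dim}(1). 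For surjectivity I would pull an arbitrary dimension function on $M/U_1$ back along $\rho$, declaring $U_1$-cosets to have dimension $0$, and check that the result is a dimension function using Proposition \ref{prop:weak}, so that only conditions $(1)$, $(2)_1$, $(4)_n$, and $(3')$ need verification.
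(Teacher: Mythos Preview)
Your approach is genuinely different from the paper's and is a reasonable strategy. The paper does \emph{not} induct on $m$: after the same QE and weak o-minimality arguments, it directly constructs dimension functions $\mydims^{\kappa}$ for $1\leq\kappa\leq m$ (declaring a set in $M$ to have dimension $0$ iff it is covered by finitely many $U_\kappa$-cosets, then extending recursively), verifies each is a dimension function via a detailed analysis of ``good'' definable sets (Claims~3--7), and finally proves these together with $\topdim$ exhaust $\mySet(\mathcal M)$ by showing that the convex set $\mathcal X=\{x>0:\dim((0,x))=0\}$ must equal $U_\kappa\cap M_{>0}$ for some~$\kappa$. Your inductive reduction to $M/U_1$ is more conceptual, and your treatment of injectivity via the set $W$ and of the case $\dim U_1=1$ is correct.

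The place where your sketch is thin is exactly where you flag it: surjectivity of the descent. If you set $\dim(X):=\overline{\dim}(\rho(X))$ on all of $\myDef(\mathcal M)$, then $(3')$ is immediate since $\rho$ commutes with the switch, but $(4)_n$ is \emph{not} automatic: for $x\in X(0)$ one has $\overline{\dim}(\rho(X_x))=0$, yet the fiber of $\rho(X\cap\Pi^{-1}(X(0)))$ over $\bar x$ is the union $\bigcup_{x'\in X(0),\,\rho(x')=\bar x}\rho(X_{x'})$, an a~priori infinite union of $\overline{\dim}$-zero sets. That this union still has $\overline{\dim}=0$ is a structural fact about $\mathcal M$ (essentially: a definable family over a subset of a single $U_1$-coset can only meet finitely many $U_k$-cosets in the fibre direction), and proving it requires precisely the QE/good-set analysis the paper carries out. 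If instead you extend recursively from $\myDef_1$ so that $(4)_n$ holds by fiat, then $(3')$ becomes the nontrivial check and needs the same machinery. So your induction relocates the hard combinatorics rather than avoiding it; the proposal is sound, but you should not expect the verification in the last paragraph to be short.
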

\begin{proof}
	We identify the unary predicate $U_i$ with the set of realizations of $U_i$ for $1 \leq i \leq m$ through the proof.
	
	First we prove that $T_m$ is consistent.
	$(\mathbb Q,<_{\mathbb Q},+_{\mathbb Q})$ is the ordered divisible Abelian group.
	Put $Q:=\mathbb Q^{m+1}$.
	$Q$ is naturally a divisible Abelian group with the addition operator $+^Q$.
	Let $<^Q$ be the lexicographic order on $Q$, then $(Q,<^Q,+^Q)$ is an ordered divisible Abelian group.
	For $1 \leq k \leq m$, put $U_k^Q:=\{0_{m+1-k}\} \times \mathbb Q^k$, where $0_{m+1-k}$ is the origin of $\mathbb Q^{m+1-k}$.
	Then, $(Q,<_Q,+^Q,U_1^Q,\ldots, U_m^Q)$ is a model of $T_m$.
	This means that $T_m$ is consistent.

	Every model $\mathcal M$ of $T_m$ is naturally a $\mathbb Q$-vector space, and the set of realization $U_k^{\mathcal M}$ of $U_k$ is a vector subspace of it for every $1 \leq k \leq m$.
	We expand $\mathcal L_m$ by the collection of unary function symbols $\{\lambda_q\}_{q \in \mathbb Q}$ and interpret $\lambda_q$ as the multiplication by a rational $q$.
	Let $\mathcal L_m'$ be the expansion denoted of $\mathcal L_m$.
	Let $T'_m$ be the extension of the theory $T_m$ adding the sentences $\forall x \ \lambda_q(x)=q \cdot x$ for every $q \in \mathbb Q$.
	
	For every $\mathcal L_m'$-substructure $\mathcal A$ of a model of $T_m'$ whose universe is $A$, we define the equivalence relations $\sim^k_{\mathcal A}$ by $$x \sim^k_{\mathcal A} y \Leftrightarrow y-x \in U_k^{\mathcal A}$$ for $1 \leq k \leq m$.
	We define $\sim^0_{\mathcal A}$ by $x \sim^0_{\mathcal A} y \Leftrightarrow y=x$.
	For $a \in A$ and $0 \leq k \leq m$, we put $$\mathcal C_{\mathcal A}^k(a):=\{b \in A\;|\; b \sim^k_{\mathcal A} a\}.$$
	We omit one of or both of the superscript $k$ and subscript $\mathcal A$ of $\sim^k_{\mathcal A}$ and $\mathcal C_{\mathcal A}^k(a)$ if they are obvious from the context.
	Observe that $\mathcal C_{\mathcal A}^k(0) = U_k^{\mathcal A}$.
	\medskip
	
	
	{\textbf{Claim 1.}} Suppose $A \neq \mathcal C_{\mathcal A}^k(0)$. Let $a \in A$. There exists $b \in A$ such that $b<a$ and $b \not\sim^k_{\mathcal A} a$.
	\begin{proof}[Proof of Claim 1]
		If $a \sim 0$, choose $b<0$ with $b \notin \mathcal C(0)$.
		This is possible because $A$ is a divisible ordered subgroup and $A \neq \mathcal C(0)$.
		Suppose $a \not\sim 0$.
		If $a>0$, put $b=0$.
		If $a<0$, put $b=2a$.
	\end{proof}
	
	{\textbf{Claim 2.}} Let $a_1, a_2 \in A$ with $a_1<a_2$ and $a_1 \not\sim^k_{\mathcal A} a_2$.
	Let $n$ be a integer $n>1$ and $k_i$ be integers with $0 \leq  k_i \leq n$ for $i=1,2$.
	Then, $(k_1a_1+(n-k_1)a_2)/n \not\sim^k_{\mathcal A} (k_2a_1+(n-k_2)a_2)/n$ if $k_1 \neq k_2$.
	\begin{proof}[Proof of Claim 2]
		Suppose $(k_1a_1+(n-k_1)a_2)/n \sim (k_2a_1+(n-k_2)a_2)/n$.
		We have
		\begin{align*}
			(k_1a_1+(n-k_1)a_2)/n - (k_2a_1+(n-k_2)a_2)/n = (k_1-k_2)(a_1-a_2)/n \in \mathcal C(0).
		\end{align*}
		Since $\mathcal C(0)$ is a subgroup, we have $(k_1-k_2)(a_1-a_2) \in \mathcal C(0)$.
		Assume for contradiction $k_1 \neq k_2$.
		Since $\mathcal C(0)$ is convex and $A$ is divisible, we have $a_1-a_2 \in \mathcal C(0)$, which is absurd.
	\end{proof}
	
	We prove that $T_m'$ admits quantifier elimination as an $\mathcal L'_m$-theory.
	
	\begin{proof}[Proof of QE]
		For convenience, put $\mathcal L_0=\{<,+\}$ and let $T_0$ be the collection of axioms of ordered divisible Abelian group.
		We define $\mathcal L'_0$ in the same manner as the definition of $\mathcal L'_m$.
		We prove that $T_m'$ admits quantifier elimination by induction on $m$.
		
		Let $\mathcal N_j$ be a models of $T_m'$ for $j=1,2$.
		Let $\mathcal A$ be their common substructure.
		We denote the universes of $\mathcal N_1$, $\mathcal N_2$ and $\mathcal A$ by $N_1$, $N_2$ and $A$, respectively.
		Let $\phi$ be an arbitrary primitive existential formula with parameters from $A$.
		In other words, there exist finitely many basic formulas $\phi_1(x), \ldots, \phi_l(x)$ with the single free variable $x$ such that $$\phi = \exists x\ \bigwedge_{i=1}^l \phi_i(x).$$
		By a QE test (cf.\ \cite[Theorem 3.2.5]{TZ}), we have only to show that $\mathcal N_2 \models \phi$ whenever $\mathcal N_1 \models \phi$.

		Every basic formula $\phi_i(x)$ for $1 \leq i \leq l$ is equivalent to one of the following forms or their negation:
		\begin{enumerate}
			\item[(a)] $x *_i c_i$, where $c_i \in A$ and $*_i \in \{=, <, >\}$;
			\item[(b)] $U_k(q_ix+a_i)$, where $1 \leq k \leq m$, $q_i \in \mathbb Q$ and $a_i \in A$.
		\end{enumerate}
		Consider the case in which $\phi_i(x)$ is of the form (b) or its negation.
		Put $c_i:=-a_1/q_i \in A$.
		It is easy to show that $\mathcal M \models U_k(q_ix+a_i) \Leftrightarrow  x\in \mathcal C^k_{\mathcal N_j}(c_i)$ for each $x \in M$ and $j=1,2$.
		Put $P:=\{c_i\;|\; 1 \leq i \leq l\}$.
		
		Let $n_1 \in N_1$ such that $\mathcal N_1 \models \bigwedge_{i=1}^l \phi_i(n_1)$.
		We want to find $n_2 \in N_2$ such that $\mathcal N_2 \models \bigwedge_{i=1}^l \phi_i(n_2)$.
		We consider the following three separate cases.
		\medskip
		
		\textbf{Case 1.} Suppose $n_1 \in A$. Put $n_2=n_1$ in this case.
		\medskip
		
		\textbf{Case 2.} Suppose $n_1 \notin A$ and there exists $a \in A$ such that $n_1 \sim^m_{\mathcal M} a$.
		By shifting, we may assume that $a=0$ without loss of generality.
		We may assume that $\phi_1,\ldots, \phi_{l'}$ are $\mathcal L_{m-1}'$-formulas by arranging the order of the formulas $\phi_1,\ldots, \phi_{l}$.
		Observe that $\mathcal C_{\mathcal N_j}^m(0)$ are models of $T_{m-1}$ for $j=1,2$.
		We have $n_1 \in \mathcal C^m_{\mathcal N_1}(0)$.
		By the induction hypothesis and the QE test, there exists an $n_2 \in \mathcal C^m_{\mathcal N_2}(0)$ such that $\mathcal N_2 \models \bigwedge_{i=1}^{l'} \phi_i(n_2)$.
		Since $n_i \in \mathcal C^m_{\mathcal N_i}(0)$ for $i=1,2$, we have $n_1 \in \mathcal C_{\mathcal N_1}^m(a') \Leftrightarrow n_2 \in \mathcal C_{\mathcal N_2}^m(a')$ for $a' \in A$.
		This deduces $\mathcal N_2 \models \bigwedge_{i=l'+1}^{l} \phi_i(n_2)$.
		We have shown $\mathcal N_2 \models \bigwedge_{i=1}^l \phi_i(n_2)$.
		\medskip
		
		\textbf{Case 3.} Suppose $n_1 \not\sim^m_{\mathcal M} a$ for every $a \in A$.
		Enumerate the elements of $P$, say $b_1, \ldots, b_p$, in the increasing order.
		Suppose $m=0$.
		Every $\phi_i$ is equivalent to formulas of the form $x *_i c_i$ with $c_i \in A$ and $*_i \in \{=, <, >\}$.
		Choose $n_2$ as an element of $A$ smaller than $b_1$ if $n_1<b_1$.
		Choose $n_2$ as an element of $A$ larger than $b_p$ if $n_1>b_p$.
		Put $n_2:=(b_{i-1}+b_i)/2$ if there exists $1<i \leq p$ such $b_{i-1}<n_1<b_i$.
		It is a routine to show $\mathcal N_2 \models \bigwedge_{i=1}^l \phi_i(n_2)$.
		We omit the proof.
		
		Suppose $m>0$.
		Observe that $\mathcal C_{\mathcal N_j}^{k_1}(a) \subseteq \mathcal C_{\mathcal N_j}^{k_2}(a)$ for $j=1,2$, $a \in N_j$ and $1 \leq k_1 \leq k_2 \leq m$.
		In particular, the case assumption implies $n_1 \notin \mathcal C_{\mathcal N_1}^k(b_i)$ for $1 \leq i \leq l$ and $1 \leq k \leq m$.
		This implies that $\phi_i(x)$ is not of the form (b) for every $1 \leq i \leq l$ because $\mathcal N_1 \models \bigwedge_{i=1}^k \phi_i(n_1)$.
		Changing the  the index appropriately, we may assume $\phi_i(x)$ is of the form (a) for $1 \leq i \leq q$ and $\phi_i(x)$ is the negations of formulas of the form (b) for $q< i \leq l$.
		
		Put $\psi(x)=\bigwedge_{i=1}^q \phi_i(x) \wedge \bigwedge_{i=1}^p (x \notin \mathcal C^m(b_i))$.
		We have $T_m' \models \forall x\ (\psi(x) \rightarrow \bigwedge_{i=1}^l \phi_i(x))$ and $\mathcal N_1 \models \psi(n_1)$.
		If $n_1<b_1$, there exists $n_2 \in A$ so that $n_2<b_1$ and $n_2 \not\sim^m_{\mathcal A} b_1$ by Claim 1.
		This $n_2$ satisfies $\mathcal N_2 \models \psi(n_2)$, and this $n_2$ is the desired element.
		We can choose the desired element $n_2 \in A$ in the same manner if $n_1>b_k$.
		If $b_i<n_1<b_{i+1}$ for some $1 \leq i <k$, $n_2:=(b_i+b_{i+1})/2$.
		We have $\mathcal N_2 \models \phi_i(n_2)$ for $1 \leq i \leq q$ and $n_2 \notin \mathcal C^m_{\mathcal N_2}(b_i)$ for $1 \leq i \leq p$ by Claim 2.
		Therefore, we have $\mathcal N_2 \models \bigwedge_{i=1}^l \phi_i(n_2)$.
	\end{proof}
	
	From now on, we fix a model $\mathcal M=(M,<,+,U_1,\ldots, U_m)$ of $T_m$, and the subscript $\mathcal M$ of $\sim_{\mathcal M}^k$ and $\mathcal C_{\mathcal M}^k$ is omitted.
	Observe that $\mathcal M$ is naturally a model of $T'_m$.
	Let $a \in M$.
	Put
	\begin{align*}
		&\mathcal D^+_k(a):=\{x \in M\;|\; x>a, x \notin \mathcal C^k(a)\} \text{ and }\\
		&\mathcal D^-_k(a):=\{x \in M\;|\; x<a, x \notin \mathcal C^k(a)\}
	\end{align*}
	for $0 \leq k \leq m$.
	Observe that $\mathcal D^+_0(a):=\{x \in M\;|\; x>a\}$ and $\mathcal D^-_0(a):=\{x \in M\;|\; x<a\}$
	
	Let $X$ be a definable subset of $M^n$ for $n>0$.
	Here, $M^0$ is considered to be a singleton.
	Recall that an \textit{affine function} $f:M^{n-1} \to M$ with coefficients in $\mathbb Q$ is of the form $$f(\overline{x})=\overline{q} \cdot \overline{x} + a,$$ where $\overline{q} \in \mathbb Q^{n-1}$, $a \in M$ and $\overline{q} \cdot \overline{x}$ is the inner product.
	The restriction of an affine function to subsets of $M^{n-1}$ is also called affine by abuse of the terminology.
	We say that $X$ is \textit{good} if there exist a subset $\mathcal I$ of $\{1,2,3\}$, $0 \leq k_i \leq m$ for $i \in \mathcal I $ and affine functions $f_i:\Pi(X) \to M$ with coefficients in $\mathbb Q$ for $i \in \mathcal I$ such that $X$ is of the following form:
	\begin{align*}
		X=\bigcap_{i \in \mathcal I} \mathcal S_i^{k_i}(f_i(x)),
	\end{align*}
	where
	\begin{align*}
		& \mathcal S_1^{k_1}(f_1(x)) =\{(x,y) \in \Pi(X) \times M\;|\; y \in \mathcal C^{k_1}(f_1(x))\}, \\
		& \mathcal S_2^{k_2}(f_2(x)) =\{(x,y) \in \Pi(X) \times M\;|\; y\in \mathcal D^-_{k_2}(f_2(x))\} \text { and } \\
		& \mathcal S_3^{k_3}(f_3(x)) =\{(x,y) \in \Pi(X) \times M\;|\; y \in \mathcal D^+_{k_3}(f_3(x))\}. 
	\end{align*}
	We put $\mathcal S_i^{k_i}(f_i(x)):=\Pi(M) \times M$ for $1 \leq i \leq 3$ with $i \notin \mathcal I$ in order to avoid annoying case distinction.
	We have $X=\bigcap_{i=1}^3 \mathcal S_i^{k_i}(f_i(x))$ by using this new notation.
	Note that, if $n=1$, the functions $f_i(x)$ are constants.
	We often omit superscript $k_i$ of $\mathcal S_i^{k_i}(f_i(x))$.
	\medskip
	
	\textbf{Claim 3.} Every definable subset $X$ of $M^n$ is a union of finitely many good definable subsets.
	\begin{proof}[Proof of Claim 3]
		We say that a formula $\phi(\overline{x},y)$ with the tuple of $(n-1)$ free variables $\overline{x}$, the free variable $y$ and parameters from $M$ is \textit{primitive} if there exist nonnegative integers $l_i$ for $1 \leq i \leq 3$ and a formula $\psi(\overline{x})$ with parameters from $M$ such that 
		\begin{align*}
			\phi(\overline{x},y) &=\psi(\overline{x}) \wedge \bigwedge_{i=1}^{l_1}(y \in \mathcal C^{k_{1i}}(g_{1i}(\overline{x}))) \wedge \bigwedge_{i=1}^{l_2}(y \in \mathcal D^+_{k_{2i}}(g_{2i}(\overline{x}))) \\
			&\qquad \wedge \bigwedge_{i=1}^{l_3}(y \in \mathcal D^-_{k_{3i}}(g_{3i}(\overline{x}))),
		\end{align*}
		where $l_j \geq 0$, $g_{ji}(\overline{x})$ is affine functions with coefficients in $\mathbb Q$ and $0 \leq k_{ji} \leq m$ for $1 \leq j \leq 3$ and $1 \leq i \leq l_j$.
		Here, the notation $\bigwedge_{i=1}^0 (\text{formula})$ means the tautology $\top$.
	
		Since $T$ admits quantifier elimination as an $\mathcal L'$-theory, $X$ is a union of finitely many definable sets defined by primitive formulas.
		Therefore, we may assume that $X$ is defined by a primitive formula $\phi(\overline{x},y)$.
		
		Suppose $l_1 \neq 0$.
		Put $\widetilde{k}:=\min\{k_{1i}\;|\; 1 \leq i \leq l_1\}$ and choose $1 \leq \widetilde{i} \leq l_1$ such that $\widetilde{k}=k_{1\widetilde{i}}$.
		By the definition of the equivalence class $\mathcal C^k(a)$, we get $$\bigwedge_{i=1}^{l_1}(y \in \mathcal C^{k_{1i}}(g_{1i}(\overline{x}))) \leftrightarrow (y \in \mathcal C^{\widetilde{k}}(g_{1\widetilde{i}}(\overline{x})))$$ for every $(\overline{x},y) \in M^n$ with $\mathcal M \models (\exists y\ \bigwedge_{i=1}^{l_1}(y \in \mathcal C^{k_{1i}}(g_{1i}(\overline{x}))))$.
		Therefore, we may assume that $l_1=0$ or $l_1=1$ without loss of generality.
		
		We may assume $l_j=0$ or $l_j=1$ for $j=2,3$.
		We only discuss about the case in which $j=2$, but the case in which $j=3$ is similar. 
		
		Suppose $l_2>0$.
		Put $\mathcal P_k:=\{1 \leq i \leq l_2\;|\; k_{2i}=k\}$ for $0 \leq k \leq m$ and $\mathcal K:=\{0 \leq k \leq m\;|\; \mathcal P_k \neq \emptyset\}$.
		By partitioning $\Pi(X)$ into finitely many definable sets, we can reduce to the case where, for each $k \in \mathcal K$, there exists $p_k \in \mathcal P_k$ such that $g_{2p_k}(\overline{x}) = \sup \{g_{2i}(\overline{x})\;|\; i \in \mathcal P_k\}$ for each $\overline{x} \in \Pi(X)$.
		Put $h_k(\overline{x}):=g_{2p_k}(\overline{x})$.
		We have $$\bigwedge_{i \in \mathcal P_k}(y \in \mathcal D^+_{k_{2i}}(g_{2i}(\overline{x}))) \leftrightarrow y \in \mathcal D^+_{k}(h_k(\overline{x}))$$ for $\overline{x} \in \Pi(X)$ and $k \in \mathcal K$.
		
		By partitioning $\Pi(X)$ into finitely many definable sets again, we may assume that, for every $k_1,k_2 \in \mathcal K$ and $0 \leq k \leq m$, there exist $\square_{1,k_2,k_2} \in \{<,=,>\}$ and $\square_{2,k_1,k_2,k} \in \{\sim^k, \not\sim^k\}$ such that the relations 
		\begin{center}
		$h_{k_1}(\overline{x}) \ \square_{1,k_2,k_2}\  h_{k_2}(\overline{x})$ and $h_{k_1}(\overline{x}) \ \square_{2,k_2,k_2,k}\  h_{k_2}(\overline{x})$
		\end{center}
		holds for every $\overline{x} \in \Pi(X)$.
		Let $k' \in \mathcal K$ be such that $h_{k'}(\overline{x})=\sup\{h_k(\overline{x})\;|\;k \in \mathcal K\}$ for every $\overline{x} \in \Pi(X)$.
		Let $\widetilde{k}:=\sup\{k \in \mathcal K\;|\; h_k(\overline{x}) \sim_{\mathcal M}^k h_{k'}(\overline{x})\}$.
		Then we have $$\bigwedge_{k \in\mathcal K} (y \in \mathcal D^+_{k}(h_k(\overline{x}))) \leftrightarrow y \in \mathcal D^+_{\widetilde{k}}(h_{\widetilde{k}}(\overline{x}))$$ for $\overline{x} \in \Pi(X)$.
		In summary, we have $$\bigwedge_{i=1}^{l_2}(y \in \mathcal D^+_{k_{2i}}(g_{2i}(\overline{x}))) \leftrightarrow y \in \mathcal D^+_{\widetilde{k}}(h_{\widetilde{k}}(\overline{x}))$$
		for $\overline{x} \in \Pi(X)$.
		We have reduced to the case in which $l_2=1$.
	\end{proof}
	
	We show that $\mathcal M$ is a weakly o-minimal structure.
	Let $X$ be a definable subset of $M$.
	We show that $X$ is a union of a finite set and a finitely many open convex sets.
	$X$ is a finite union of good definable sets by Claim 3.
	Therefore, we may assume that $X$ is a good definable set without loss of generality.
	We define $\mathcal I$, $k_i$, $f_i(x)$ and $\mathcal S_i^{k_i}(f_i(x))$ as in the definition of good definable sets.
	$\mathcal S_i^{k_i}(f_i)$ is a singleton if $i=1$ and $k_1=0$, and $\mathcal S_i^{k_i}(f_i)$ is an open convex set in the other cases.
	Observe that the intersection of two open convex sets is open and convex.
	Therefore, $X$ is either a singleton or an open convex set.
	We have shown that  $\mathcal M$ is weakly o-minimal.
	\medskip
	
	We prove that the dimension function {$\topdim$} is a dimension function.
	For the simplicity of notations, we write $\dim$ instead of $\topdim$.
	By the definition of the topological dimension, conditions $(1)$ and $(3)_n$ in Definition \ref{def:dimension} hold for every $n$.
	Condition $(2)_n$ holds for every $n$ by \cite{A} and \cite[Corollary 4.4]{MMS}.
	Recall that $\overline{M}$ denotes the definable Dedekind completion of $M$ under the order $<$.
	According to \cite[Theorem 4.2]{Wencel}, in weakly o-minimal structures, satisfying condition $(4)_n$ for all $n>1$ is equivalent to the condition that, for every definable function $f:I \to \overline{M}$ from a nonempty open interval $I$, there exists a nonempty interval $J$ such that the restriction of $f$ to $J$ is continuous. 
	By the definition of definable continuous functions into the Dedekind completion, there exists a definable subset $X$ of $M^2$ such that $f(x)=\sup X_x^{\Pi_1^2}$ for $x \in I$.
	
	Let $X=\bigcup_{i=1}^lX_i$ be a partition of $X$ into good definable sets given in Claim 3.
	At least one of $\Pi_1^2(X_i)$ contains a nonempty open interval $I'$ by $(2)_1$.
	We may assume that $X$ is a good definable set by considering $X_i \cap (\Pi_1^2)^{-1}(I')$ instead of $X$.
	
	We define $\mathcal I$, $k_i$, $f_i(x)$ and $\mathcal S_i^{k_i}(f_i(x))$ as in the definition of good definable sets.
	Put $h_1(x)=\sup \{f_1(x)+t\;|\; t \in U_{k_1}^{\mathcal M}\}$ and $h_2(x):=\inf\{f_2(x) + t\;|\; t \in U_{k_2}^{\mathcal M}\}$.
	Since $X$ is bounded above, at least one of $1$ and $2$ belongs to $\mathcal I$.
	We have $f(x) =\sup\{h_i(x)\;|\; i \in \{1,2\} \cap \mathcal I\}$, which is continuous.
	We have shown that {$\topdim$} is a dimension function.
	\medskip

	For $1 \leq  \kappa \leq m$, we construct a dimension function $\mydims^{\kappa}:\myDef(\mathcal M) \to \mathbb N \cup \{-\infty\}$ such that $\mydims^{\kappa}(U_{\kappa}^{\mathcal M})=0$ and $\mydims^{\kappa}(U_{\kappa+1}^{\mathcal M})=1$.
	For simplicity of notations, we write $\mydims$, $\mathcal C(\cdot)$ and $\sim$ instead of $\mydims^{\kappa}$, $\mathcal C_{\mathcal M}^{\kappa}(\cdot)$ and $\sim_{\mathcal M}^{\kappa}$.
	
	We put $\mydims(\emptyset)=-\infty$, and we define $\mydims(X)$ for each nonempty definable subset $X$ of $M^n$ by induction on $n$.
	
	\begin{itemize}
		\item If $n=1$. We put $\mydims X=0$ if and only if there exist finitely many points $a_1,\ldots, a_l \in M$ such that $X \subseteq \bigcup_{i=1}^l\mathcal C(a_i)$.
		Otherwise, we put $\mydims(X)=1$.
		\item If $n>1$, put $X(i)=\{x \in M^{n-1}\;|\; \mydims(X_x^{\Pi})=i\}$ for $i=0,1$.
		We will prove that both $X(0)$ and $X(1)$ are definable later.
		Put $$\mydims(X):=\max\{\mydims(X(1))+1,\mydims(X(0))\}.$$
	\end{itemize}
	
	We first prove the following:
	\medskip
	
	\textbf{Claim 4.} For every definable convex subset $I$ of $M$, $\mydims(I)=0$ if and only if $I$ is contained in $\mathcal C(x)$ for some $x \in I$.
	\begin{proof}[Proof of Claim 4]
		The `if' part is obvious from the definition of $\mydims$.
		The `only if' part is a direct corollary of Claim 3.
	\end{proof}
	
	\textbf{Claim 5.} Let $1 \leq k_1<k_2 \leq m$ and $\kappa<k_2$.
	Suppose $\mathcal D_{k_1}^+(a_1) \cap \mathcal C^{k_2}(a_2) \neq \emptyset$, then $\mydims(\mathcal D_{k_1}^+(a_1) \cap \mathcal C^{k_2}(a_2))=1$.
	\begin{proof}[Proof of Claim 5]
		Put $X:=\mathcal D_{k_1}^+(a_1) \cap \mathcal C^{k_2}(a_2)$.
		Pick $b \in X$.
		Let $0<t \in \mathcal C^{k_2}(0) \setminus \mathcal C^{\kappa}(0)$.
		We have $b, b+t \in X$ and $b \not\sim b+t$.
		By Claim 4, we have $\mydims(\mathcal D_{k_1}^+(a_1) \cap \mathcal C^{k_2}(a_2))=1$.
	\end{proof}
	
	\textbf{Claim 6.} Let $f: M \to M$ be an affine function with coefficient in $\mathbb Q$.
	\begin{enumerate}
		\item[(a)] Let $a,b \in M$.
		If $a \sim b$, then $f(a) \sim f(b)$.
		\item[(b)] Let $a,b \in M$. Suppose $f$ is not constant.
		If $a \not\sim b$, then $f(a) \not\sim f(b)$.
		\item[(c)] Put $\mathcal C(S):=\bigcup_{a \in S}\mathcal C(a)$ for a subset $S$ of $M$.
		Then, $\mathcal C(f(a))=\mathcal C(f(\mathcal C(a)))$.
	\end{enumerate}
	\medskip
	
	The proof of Claim 6 is straightforward and left to readers.
	\medskip
	
	\textbf{Claim 7.} Let $X$ be a good definable subset of $M$.
	That is, there exist $\mathcal I \subseteq \{1,2,3\}$, nonnegative integers $k_i$ and $a_i \in M$ for $i \in \mathcal I$ such that $X=\bigcap_{i \in \mathcal I} \mathcal S_i^{k_i}(a_i)$.
	Then, $\mydims(X)=0$ if and only if one of the following conditions holds:
	\begin{enumerate}
		\item[(a)] $1 \in \mathcal I$ and $k_1 \leq \kappa$;
		\item[(b)] $1 \notin \mathcal I$, $2,3 \in \mathcal I$, $k_2 \leq \kappa$, $k_3 \leq \kappa$, and $a_2 \sim a_3$;
		\item[(c)] $1,2,3 \in \mathcal I$, $k_1 > \kappa$, $k_2 \leq \kappa$, $k_3 \leq \kappa$, $a_1 \sim^{k_1} a_2$, $a_1 \sim^{k_1} a_3$ and $a_2 \sim a_3$.
	\end{enumerate}
	Furthermore, there exists $i \in \{1,2\}$ such that $X \subseteq \mathcal C(a_i)$ if $\mydims(X)=0$. 
	\begin{proof}[Proof of Claim 7]
		In this proof, we omit the superscript $k_i$ of $\mathcal S_i^{k_i}(a_i)$.
		If $1 \in \mathcal I$ and $k_1 \leq \kappa$, we have $\mydims(X)=0$ by Claim 4.
		
		Consider the case in which $2 \notin \mathcal I$ or $3 \notin \mathcal I$.
		We only consider the case in which $2 \notin \mathcal I$ because the proof of the other case is similar.
		If $1 \notin \mathcal I$, $X$ is unbounded and we have $\mydims(X)=1$ by Claim 1.
		If $1 \in \mathcal I$, $k_1>\kappa$, then $\mydims(X)=1$ by Claim 5.
		
		Consider the remaining case in which $2 \in \mathcal I$ and $3 \in \mathcal I$.
		Suppose $1 \notin \mathcal I$.
		If $k_2 \geq \kappa$ and $a_2 \sim a_3$, we have $\mathcal  S_2(a_2) \cap \mathcal S_3(a_3)=\emptyset$.
		The case $k_3 \geq \kappa$ is similar.
		We may assume that $k_2<\kappa$ and $k_3<\kappa$.
		If $a_2 \not\sim a_3$, we have $(a_2+2a_3)/3, (2a_2+a_3)/3 \in \mathcal S_2(a_2) \cap \mathcal S_3(a_3)$ and $(a_2+2a_3)/3 \not\sim (2a_2+a_3)/3$ by Claim 2.
		This implies $\mydims(X)=1$ by Claim 4.
		If $a_2 \sim a_3$, we have $\mathcal S_2(a_2) \cap \mathcal S_3(a_3) \subseteq \mathcal C(a_2)$ and we get $\mydims(X)=0$ by Claim 4.
	
		Suppose $1 \in \mathcal I$ and $k_1>\kappa$.
		Consider the case where $\mathcal S_2(a_2) \cap \mathcal S_3(a_3) \not\subseteq S_1(a_1)$.
		We have either $X=\mathcal S_1(a_1) \cap \mathcal S_3(a_3)$ or $X=\mathcal S_1(a_1) \cap \mathcal S_2(a_2)$.
		Let us consider the case $X=\mathcal S_1(a_1) \cap \mathcal S_3(a_3)$.
		We have $\mydims (X)=1$ by Claim 5 in this case.
		We get $\mydims (X)=1$ if $X=S_1(a_1) \cap \mathcal S_2(a_2)$ similarly.
		
		The next case is the case where $\mathcal S_2(a_2) \cap \mathcal S_3(a_3) \subseteq \mathcal S_1(a_1)$.
		We have $\mathcal S_2(a_2) \cap \mathcal S_3(a_3)=\emptyset$ unless $k_2<k_1$ and $k_3<k_1$.
		Therefore, we assume $k_2<k_1$ and $k_3<k_1$.
		The inclusion $\mathcal S_2(a_2) \cap S_3(a_3) \subseteq \mathcal S_1(a_1)$ implies $a_1 \sim^{k_1} a_2$ and $a_1 \sim^{k_1} a_3$.
		In this case, the equality $X=\mathcal S_2(a_2) \cap \mathcal S_3(a_3)$ holds.
		We have $\mydims(X)=0$ if and only if $a_2 \sim a_3$ by Claim 4.
		We have $X \subseteq \mathcal C(a_2)$ in this case.
		In summary, we get $\mydims(X)=0$ if and only if $k_2 \leq \kappa$, $k_3 \leq \kappa$, $a_1 \sim^{k_1} a_2$, $a_1 \sim^{k_1} a_3$ and $a_2 \sim a_3$
	\end{proof}

	We show that $\mydims$ is a dimension function.
	It is easy to show that $\mydims$ satisfies conditions (1) and $(2)_1$ of Definition \ref{def:dimension}.
	We omit the proof.
	
	We show that $X(0)$ and $X(1)$ definable.
	We may reduce to simpler cases by covering $X$ by finitely many simpler definable sets $X_1,\ldots X_l$.
	In fact, we have $X(1)=\bigcup_{i=1}^l X_i(1)$ and $X(0)=\Pi(X) \setminus X(1)$ by $(2)_1$.
	If $X_i(0)$ and $X_i(1)$ are definable for all $1 \leq i \leq l$, then $X(0)$ and $X(1)$ are definable.
	By Claim 3, we may assume that $X$ is a good definable set without loss of generality.
	Claim 7 implies that $X(0)$ and $X(1)$ are definable. 
	Condition $(4')_n$ in Definition \ref{def:dimension2} obviously holds by the definition of $\mydims$.
	
	We show that $\mydims$ satisfies condition $(3')$ in Definition \ref{def:dimension2}.
	Let $X$ be a definable subset of $M^2$.
	By Lemma \ref{lem:2n}, condition $(2)_2$ in Definition \ref{def:dimension} holds.
	Using $(2)_2$ and Claim 3, we may assume that $X$ is a good definable set without loss of generality.
	We define $\mathcal I$, $k_i$, $f_i(x)$ and $\mathcal S_i^{k_i}(f_i(x))$ as in the definition of good definable sets.
	Since $\mathcal M$ is weakly o-minimal, $\Pi(X)$ is a union of a finite set and finitely many definable open convex set.
	By partitioning $\Pi(X)$, by $(2)_2$, we may assume the following: 
	\begin{itemize}
		\item $\mydims(X_x^{\Pi})$ is independent of $x \in \Pi(X)$;
		\item For every $i,j \in \mathcal I$ and $1 \leq k \leq m$, there exists $\square_{i,j,k} \in \{\sim_k, \not\sim_k\}$ such that $f_i(x) \square_{i,j,k} f_j(x)$ holds for every $x \in \Pi(X)$;
		\item For every $i,j \in \mathcal I$, there exists $\square_{i,j} \in \{=,<,>\}$ such that  $f_i(x) \square_{i,j} f_j(x)$ for every $x \in \Pi(X)$;
		\item $\Pi(X)$ is a singleton or an open convex set.
	\end{itemize}
	Put $Y:=X^{\text{switch}}$.
	
	First consider the case in which $\mydims(\Pi(X))=0$.
	Suppose that there exists $x \in \Pi(X)$ such that $\mydims(X_x^{\Pi})=1$.
	We have $\mydims(X)=1$ by the definition of $\mydims$.
	We have $X_x^{\Pi} \subseteq \Pi_1^2(Y)$, which implies $\dim \Pi(Y)=1$ by $(2)_1$.
	For every $y \in \Pi(Y)$, we have $Y_y^{\Pi} \subseteq \Pi(X)$, which implies $\mydims(Y_y^{\Pi})=0$.
	We get $\mydims(Y)=1=\mydims(X)$.
	
	Suppose that $\mydims(X_x^{\Pi})=0$ for every $x \in \Pi(x)$.
	We have $\mydims(X)=0$ in this case.
	There exist finitely many $a_1,\ldots, a_k \in \Pi(X)$ such that $\Pi(X) \subseteq \bigcup_{i=1}^k \mathcal C(a_i)$ because $\mydims(\Pi(X))=0$.
	In fact, we have $k=1$ by Claim 4 because $\Pi(X)$ is either a singleton or a convex open set.
	By Claim 7, we have $\dim (X_x^{\Pi_1^2})=0$ if and only if $X_x^{\Pi_1^2} \subseteq \mathcal C(f_i(x))$ for some $i=1,2$.
	For $x \in \Pi(X)$, by Claim 6(a), we have $f_i(x) \sim f_i(a_1)$ for $i \in \mathcal I$ because $x \in \mathcal C(a_1)$.
	Therefore, we have $f_i(x) \in \mathcal C(f_i(a_1))$ for $x \in \Pi(X)$.
	We have shown that $\Pi(Y) \subseteq \bigcup_{i \in \mathcal I} \mathcal C(f_i(a_1))$.
	This implies $\mydims(\Pi(Y))=0$.
	For every $y \in \Pi(Y)$, we have $Y_y^{\Pi} \subseteq \Pi(X)$, which implies $\mydims(Y_y^{\Pi})=0$.
	We get $\mydims Y = 0 = \mydims X$.
	
	Next consider the case in which $\mydims(\Pi(X))=1$.
	Suppose that $\mydims(X_x^{\Pi})=0$ for every $x \in \Pi(x)$.
	We have $\mydims X=1$.
	There exists $i \in \{1,2\}$ such that $X_x^{\Pi_1^2} \subseteq \mathcal C(f_i(x))$ by Claim 7.
	If $f_i(x)$ is a constant function whose value is constantly $c$, we obtain $\Pi(Y) \subseteq \mathcal C(c)$.
	This means $\mydims (\Pi(Y))=0$.
	Assume for contradiction that $\mydims (Y_y^{\Pi})=0$ for every $y \in \Pi(Y)$.
	We have $\mydims (Y^{\text{switch}})=\mydims Y=0$ by the previous argument.
	However,  $Y^{\text{switch}}=X$ and we get $\mydims X=0$, which is absurd.
	We have shown that there exists $y \in \Pi(Y)$ with $\mydims(Y_y^{\Pi})=1$.
	We get $\mydims Y=1=\mydims X$.
	
	Next assume that $f_i$ is not a constant function.
	Since $f_i$ is affine, the inverse $f_i^{-1}$ exists.
	By Claim 6(a-c), we have $$\Pi^{-1}(\mathcal C(x)) \cap Y \subseteq \mathcal C(x) \times \mathcal C(f_i^{-1}(x))$$ for every $x \in \Pi(Y)$.
	This yields $\mydims(Y_x^{\Pi}) \leq \mydims( \mathcal C(f_i^{-1}(x)))=0$.
	
	Since $\mydims (\Pi(X))=1$, we can choose infinitely many $a_1,\ldots \in \Pi(X)$ such that $a_{j_1} \not\sim a_{j_2}$ whenever $j_1 \neq j_2$.
	Since $X_{a_j}^{\Pi}$ is not empty and contained in $\mathcal C(f_i(a_j))$, we have $\mathcal C(f_i(a_j)) \cap \Pi(Y) \neq \emptyset$ for every $j$.
	Choose $b_j \in \mathcal C(f_i(a_j)) \cap \Pi(Y)$ for every $j$.
	By Claim 6(b), we get $b_{j_1} \not\sim_{\mathcal M} b_{j_2}$ whenever $j_1 \neq j_2$.
	This means $\mydims (\Pi(Y))=1$.
	We have shown $\mydims(Y)=1=\mydims(X)$.
	
	Let us consider the case in which $\mydims (\Pi(X))=1$ and $\mydims(X_x^{\Pi})=1$ for every $x \in \Pi(X)$.
	In this case, $\Pi(X)$ is a convex open set.
	We put $S_1^{m+1}(a)=M$ for $a \in M$.
	By Claim 7, for every $x \in \Pi(X)$, one of the following holds:
	\begin{enumerate}
		\item[(a)] $X_x^{\Pi}=\mathcal S_1^{k_1}(f_1(x))$ with $k_1 > \kappa$.
		\item[(b)] $k_1>\kappa$ and there exists $i=2,3$ such that $X_x^{\Pi}=\mathcal S_i^{k_i}(f_i(x)) \cap \mathcal S_1^{k_1}(f_1(x))$ for every $x \in \Pi(X)$;
		\item[(c)] $2,3 \in \mathcal I$, $f_2(x) \not\sim f_3(x)$ and $X_x^{\Pi}=\bigcap_{i=2}^3\mathcal S_i^{k_i}(f_i(x))$ for every $x \in \Pi(X)$.
	\end{enumerate}
	For case (a), we have $Y=\mathcal C^{k_1}(c) \times \Pi(X)$ if $f_1$ is constantly equal to $c \in M$.
	In the other case, we have $\mydims \Pi(Y)=1$ by Claim 4 and Claim 6(b), and $Y_x^{\Pi}=\mathcal C^{k_1}(f_1^{-1}(x))$ for $x \in \Pi(Y)$ by Claim 6(a-c).
	We have $\dim Y=2$ in this case.
	
	The proofs for cases (b) and (c) are similar.
	We consider case (c) only.
	
	Observe that $f_3(x)<f_2(x)$ and, by Claim 4, we have $f_2(x) \not\sim f_3(x)$ for every $x \in \Pi(X)$.
	Put $g_k(x)=(kf_3(x)+(5-k)f_2(x))/5$ for $1 \leq k \leq 4$.
	Pick $a \in \Pi(X)$.
	At least one of the equalities $\mydims((a,\infty) \cap \Pi(X))=1$ and  $\mydims((-\infty,a) \cap \Pi(X))=1$ holds by $(2)_1$.
	We may assume $\mydims((a,\infty) \cap \Pi(X))=1$ without loss of generality.
	We want to find $b_2 \in \Pi(X) \cup \{+\infty\}$ so that $a<b_2$, $a \not\sim b_2$ and $g_2(a)>g_1(x)$ for all $a<x<b_2$.
	
	First, suppose that there exists $b \in \Pi(X)$ such that $b>a$, $g_1(b)=g_2(a)$.
	If $b \sim a$, then $g_1(a) \sim g_1(b) \sim g_2(a)$ by Claim 6(a).
	However, this relation contradicts Claim 2.
	We have shown that $b \not\sim a$.
	Note that the intermediate value property holds for affine functions in $\mathcal M$.
	The $b_2:=b$ suffices our requirements in this case.
	Second, suppose such $b$ doesn't exists.
	We have $g_1(x)<g_2(a)$ for every $x \in \Pi(X) \cap (a,\infty)$.
	Put $b_2=+\infty$ in this case.
	
	In the same manner, we choose $b_3 \in \Pi(X) \cup \{+\infty\}$ so that $a<b_3$, $a \not\sim b_3$ and $g_3(a)<g_4(x)$ for all $a<x<b_3$.
	Let $b=\min\{b_1,b_3\}$.
	Since $\pi(X)$ is convex, we have $(a,b) \subseteq \Pi(X)$.
	By the choice of $b$, the open box $T:=(a,b) \times (g_2(a),g_3(a))$ is contained in $X$.
	We have $\mydims ((a,b)) =1$ and $\mydims ((g_2(a),g_3(a)))=1$.
	Therefore $2 \geq \mydims Y \geq \mydims (T^{\text{switch}}) =2$ by $(2)_2$.
	
	We have proved that $\mydims$ satisfies $(3')$ in Definition \ref{def:dimension2}.
	Proposition \ref{prop:weak} implies that $\mydims$ is a dimension function.
	\medskip
	
	The final task is to show that there exists no dimension functions on $\mathcal M$  other than the topological dimension and $\mydims^{\kappa}$.
	
	Let $\dim$ be a dimension function on $\mathcal M$.
	First suppose $\dim Z=0$ if and only if $Z$ is a finite set for every nonempty definable subset $Z$ of $M$.
	$\dim$ coincides with the topological dimension on $\myDef_1(\mathcal M)$.
	We have $\dim=\topdim$ by Proposition \ref{prop:dim}(1).
	
	Next suppose there exists an infinite definable subset $Z$ of $\mathcal M$ such that $\dim Z=0$.
	Since $\mathcal M$ is weakly o-minimal, there exists an open convex set $Y$ of $M$ such that $\dim (Y)=0$ and $Y \subseteq X$ by $(2)_1$ of Definition \ref{def:dimension}.
	By shifting $Y$, we may assume that $0 \in Y$ by Proposition \ref{prop:dim}(2).
	Take $b \in Y$ with $b>0$.
	We have $\dim((0,b))=0$ by $(2)_1$.
	
	Put $$\mathcal X=\{x \in M\;|\; \dim((0,x))=0\}.$$
	$\mathcal X$ is a nonempty convex subset of $(0,\infty)$.
	In fact, we have $a_1 \in \mathcal X$ whenever $0<a_1<a_2$ and $a_2 \in \mathcal X$.
	By Lemma \ref{lem:unbounded0}, we have $$\dim \mathcal X=0.$$
	In addition, if $x_1,x_2 \in \mathcal X$, we have $x_1+x_2 \in \mathcal X$.
	In fact, $\dim((x_1,x_1+x_2))=0$ by Proposition \ref{prop:dim}(2).
	We get 
	\begin{align*}
		\dim((0,x_1+x_2))=\max\{\dim((0,x_1)),\dim\{x_1\},\dim((x_1,x_1+x_2))\}=0
	\end{align*}
	by $(2)_1$ of Definition \ref{def:dimension}.
	$\mathcal X$ is bounded.
	Otherwise, we have $\mathcal X=(0,\infty)$ and we have $\dim M=0$, which contradicts Definition \ref{def:dimension}(1).
	If $s:=\sup \mathcal X$ exists in $M$, take an element $t \in \mathcal X$ sufficiently close to $s$.
	We have $2t>s$ and $2t \in \mathcal X$, which is absurd. 
	Therefore $\sup \mathcal X$ does not exist in $M$.
	
	By Claim 3, $\mathcal X$ is a union of finitely many good definable sets $Z_1,\ldots, Z_l$.
	Recall that good definable sets are convex.
	Let $Z$ be the rightmost good definable set among $Z_1,\ldots, Z_l$.
	Let $M_{>a}$ be the set given by $\{x\in M\;|\;x>a\}$.
	Since $\sup \mathcal X$ does not exists in $M$, by the definition of good definable sets, there exist $a \in Z$, $b \in M$ and $1 \leq \kappa \leq m$ such that $Z \cap M_{>a}$ is one of the following form:
	\begin{enumerate}
		\item[(i)] $\mathcal C^{\kappa}(b) \cap M_{>a}$;
		\item[(ii)] $\mathcal D^-_{\kappa}(b) \cap M_{>a}$:
	\end{enumerate}
	We want to show that the feasible case is only case (i) and $b \in \mathcal C^{\kappa}(0)$.
	
	Consider case (ii).
	We have $b>0$ and $b \notin \mathcal C^{\kappa}(0)$ because $\emptyset \neq \mathcal X \subseteq M_{>0}$.
	In particular, we get $b \not\sim^{\kappa} 2b$.
	We get $\mathcal X=\mathcal D^-_{\kappa}(b) \cap M_{>0}$
	On the other hand, we have
	$$\{x_1+x_2\;|\;x_i \in \mathcal D^-_{\kappa}(b) \cap M_{>0} \text{ for }i=1,2 \}= \mathcal D^-_{\kappa}(2b) \cap M_{>0}$$
	Since $\mathcal X$ is closed under addition, we have $\mathcal D^-_{\kappa}(2b) \cap M_{>0} \subseteq \mathcal X$.
	However, $3b/2 \in (\mathcal D^-_{\kappa}(2b) \cap M_{>0}) \setminus \mathcal X$ by Claim 2, which is absurd.
	
	Consider case (i).
	Suppose $b \notin \mathcal C^{\kappa}(0)$.
	Observe that $b>0$.
	We get $b \not\sim^{\kappa} 2b$.
	In particular, $2b \notin \mathcal X$ because $2b \notin \mathcal C^{\kappa}(b)$.
	Observe that $b \in \mathcal C^{\kappa}(b) \cap M_{>0} \subseteq \mathcal X$.
	Since $\mathcal X$ is closed under addition, $2b \in \mathcal X$, which is absurd.

	We have shown that the feasible case is case (i) and $b \in \mathcal C^{\kappa}(0)$.
	In this case, we have
	\begin{equation}
		\mathcal X=\mathcal C^{\kappa}(0) \cap M_{>0}. \tag{***} \label{eq:3rd}
	\end{equation}
	
	We want to show that $\dim$ coincides with $\mydims^{\kappa}$.
	By Proposition \ref{prop:dim}(1) and the definition of $\mydims^{\kappa}$, we have only to show the following claim:
	\medskip
	
	\textbf{Claim 8.} For every definable subset $Y$ of $M$, $\dim Y=0$ if and only if there exist finitely many points $a_1,\ldots,a_l$ such that $Y \subseteq \bigcup_{i=1}^l \mathcal C^{\kappa}(a_i)$.
	\begin{proof}[Proof of Claim 8]
		We get $\dim (\mathcal C^{\kappa}(0))=0$ by equality (\ref{eq:3rd}) and $\dim(\mathcal C^{\kappa}(a))=0$ for every $a \in M$ by Proposition \ref{prop:dim}(2).
		By Definition \ref{def:dimension}(2), we have $\dim(X)=0$ for every nonempty definable subset $X$ of $\mathcal C^{\kappa}(a)$.
		The `if' part is easily deduced from these facts.
		
		We prove the `only if' part.
		Let $Y$ be a definable subset of $M$ with $\dim Y=0$.
		By Claim 3 and Definition \ref{def:dimension}$(2)_1$, we may assume that $Y$ is a good definable set.
		We consider the separate four cases.
		\medskip
		
		\textbf{Case (i).}	Suppose $Y$ is of the form $\mathcal C^k(a)$.
		By Proposition \ref{prop:dim}(2), we get $\dim \mathcal C^k(0)=0$.
		By equality (\ref{eq:3rd}), we have $\mathcal C^k(0) \subseteq \mathcal C^{\kappa}(0)$, which implies $\kappa \geq k$.
		We have $Y \subseteq\mathcal  C^{\kappa}(a)$ in this case.
		\medskip
		
		\textbf{Case (ii).}	Let us consider the case in which $Y$ is of the form $\mathcal C^{k_1}(a_1) \cap \mathcal D_{k_2}^+(a_2)$ and $\mathcal C^{k_1}(a_1) \cap \mathcal D_{k_2}^+(a_2) \neq \mathcal C^{k_1}(a_1)$.
		By taking an element of $Y$ in place of $a_1$, we may assume that $a_1 \in Y$.
		We want to show that $k_1 \leq \kappa$.
		Assume for contradiction that $k_1 > \kappa$.
		If $a_2 \notin \mathcal C^{k_1}(a_1)$, $\mathcal C^{k_1}(a_1) \cap \mathcal D_{k_2}^+(a_2)$ coincides with either an empty set or $\mathcal C^{k_1}(a_1)$ depending on whether $a_1$ is contained in $\mathcal C^{k_2}(a_2)$ or not, which is absurd.
		
		Next suppose $a_2 \in \mathcal C^{k_1}(a_1)$, that is, $a_1 \sim^{k_1} a_2$.
		Since $Y \neq \emptyset$, $k_2<k_1$.
		If $k_2 > \kappa$, choose $t \in \mathcal C^{k_1}(a_1) \cap \mathcal D_{k_2}^+(a_2)$.
		$\mathcal C^{k_2}(t)$ is contained in $Y$.
		We have $\dim(\mathcal C^{k_2}(t))=0$.
		Therefore, by shifting, we have $\mydim (\mathcal C^{k_2}(0))=0$ by Proposition \ref{prop:dim}(2).
		This contradicts the definition of $\mathcal X$ and equality (\ref{eq:3rd}).
		
		Suppose $k_2 \leq \kappa$.
		Put $Z:=Y \cup \mathcal C^{\kappa}(a_2)=(\mathcal C^{k_1}(a_1) \cap \mathcal D_{k_2}^+(a_2)) \cup \mathcal C^{\kappa}(a_2)$.
		Observe that $Z$ is convex.
		Since $\dim(\mathcal C^{\kappa}(a_2))=\dim(\mathcal C^{\kappa}(0))=0$ by Proposition \ref{prop:dim}(2), we have $\dim Z=\dim Y$ by Definition \ref{def:dimension}$(2)_1$.
		Since $a_1 \sim^{k_1} a_2$, we can take $c \in \mathcal C^{k_1}(a_1) \setminus \mathcal C^{\kappa}(a_2)=\mathcal C^{k_1}(a_2) \setminus \mathcal C^{\kappa}(a_2)$ with $c>a_2$.
		By equality (\ref{eq:3rd}) and $c \not\sim^{\kappa} a_2$, we have $c-a_2 \notin \mathcal X$.
		This implies $1=\dim ((0,c-a_2)) =\dim((a_2,c)) \leq \dim Z = \dim Y$ by Proposition \ref{prop:dim}(2) and Definition \ref{def:dimension}$(2)_1$.
		This contradicts the assumption that $\dim Y=0$.
		We have proven that $k_1 \leq \kappa$.
		Therefore, $Y$ is contained in $\mathcal C^{\kappa}(a_1)$.
		\medskip
		
		\textbf{Case (iii).}
		We can treat the case in which $Y$ is of the form $\mathcal C^{k_1}(a_1) \cap \mathcal D_{k_2}^-(a_2)$ and $\mathcal C^{k_1}(a_1) \cap \mathcal D_{k_2}^-(a_2) \neq \mathcal C^{k_1}(a_1)$ as in the previous case.
		We omit the proof.
		\medskip
		
		\textbf{Case (iv).}
		Let us consider the case in which $Y$ is of the form $\mathcal D_{+}^{k_1}(a_1) \cap \mathcal D_{-}^{k_2}(a_2)$.
		Observe that $a_1<a_2$.
		By symmetry, we may assume that $k_1 \geq k_2$.
		We have $a_1 \not\sim^{k_1}a_2$ because $Y \neq \emptyset$.
		We want to show $k_1 \leq \kappa$.
		
		Assume for contradiction $k_1>\kappa$.
		Suppose $k_2 \geq \kappa$.
		Then, $(2a_1+a_2)/3 \not\sim_{k_2} (a_1+2a_2)/3$ by Claim 2.
		This implies $(2a_1+a_2)/3 \not\sim_{\kappa} (a_1+2a_2)/3$.
		By equality (\ref{eq:3rd}), we have $1 = \dim (((2a_1+a_2)/3,(a_1+2a_2)/3))$.
		Since $Y$ contains the interval $((2a_1+a_2)/3,(a_1+2a_2)/3)$, we have $\dim Y=1$, which is absurd.
		
		Suppose $k_2<\kappa$.
		Put $Z:=Y \cup \mathcal C^{\kappa}(a_2)$.
		We have $\dim Z=\dim Y$ by Definition \ref{def:dimension}$(2)_1$ because $\dim C^{\kappa}(a_2)=0$.
		We have $a_1 \not\sim^{\kappa} a_2$ because $a_1 \not\sim^{k_1}a_2$ and $k_1>\kappa$
		By Claim 2, we have $(a_1+a_2)/2 \not\sim^{\kappa} a_2$.
		We get $\dim(((a_1+a_2)/2,a_2))=1$ by equality (\ref{eq:3rd}).
		Since the interval $((a_1+a_2)/2,a_2)$ is contained in $Z$, we have $1 \leq \dim Z = \dim Y$, which is absurd.
		We have proven $k_1 \leq \kappa$.
		
		Assume for contradiction that $a_1 \not\sim^{\kappa} a_2$.
		We have $(2a_1+a_2)/3 \not\sim_{\kappa} (a_1+2a_2)/3$ by Claim 2.
		In the same way as before, we can show $\dim Y=1$, which is absurd.
		We get $a_1 \sim^{\kappa} a_2$.
		$Y$ is contained in $\mathcal C^{\kappa}(a_1)$.
	\end{proof}
	
	We have completed the proof of Theorem \ref{thm:weakly}.
\end{proof}

\subsection{Dimension functions enjoying frontier property}
Finally, we show that there exists only one dimension function satisfying the extra property called the frontier property for definably complete locally o-minimal structures.

Let $\mathcal M$ be a first-order topological structure in the sense of \cite{Pillay2}.
A dimension function $\dim:\myDef(\mathcal M) \to \mathbb N \cup \{-\infty\}$ possesses the \textit{continuity property} if, for every definable set $X$ and definable function $f:X \to M$, the set $\mathcal D(f)$ of points at which $f$ is discontinuous is definable and $\dim \mathcal D(f)<\dim X$.

$\dim$ possesses the \textit{frontier property} if $\dim (\partial X)<\dim X$ for every definable set $X$, where $\partial X:=\mycl(X) \setminus X$ denotes the frontier of $X$.

\begin{proposition}\label{prop:equiv_cont_frontier}
	Let $\mathcal M=(M,\ldots)$ be a first-order topological structure.
	Let $\dim:\myDef(\mathcal M) \to \mathbb N \cup \{-\infty\}$ be a dimension function.
	If $\dim$ possesses the continuity property, then it enjoys the frontier property.
\end{proposition}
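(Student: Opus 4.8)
The plan is to manufacture, from the given definable set $X$, a single auxiliary definable function whose discontinuity locus contains the frontier $\partial X$, and then to invoke the continuity property for that function. Fix two distinct points $a,b\in M$ that the topology separates (since the topology carried by a first-order topological structure is Hausdorff, there is an open set $V\ni b$ with $a\notin V$). Put $C:=\mycl(X)$; this is definable, because the topology has a uniformly definable basis and hence $y\in\mycl(X)$ is expressed by the definable condition that every basic open neighbourhood of $y$ meets $X$. Define $f:C\to M$ by setting $f\equiv a$ on $X$ and $f\equiv b$ on $\partial X=C\setminus X$. This $f$ is a definable function into $M$, so the continuity property applies to it.

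First I would verify the inclusion $\partial X\subseteq \mathcal D(f)$. Let $y\in\partial X$, so $f(y)=b$ while $y\in\mycl(X)$, meaning every neighbourhood of $y$ meets $X$. If $f$ were continuous at $y$, there would be a neighbourhood $W$ of $y$ in $C$ with $f(W)\subseteq V$; but $W$ meets $X$, on which $f=a\notin V$, a contradiction. Hence $f$ is discontinuous at every point of $\partial X$, giving $\partial X\subseteq\mathcal D(f)$.

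Next I would apply the continuity property to $f:C\to M$: the set $\mathcal D(f)$ is definable and $\dim\mathcal D(f)<\dim C$. Combining this with the inclusion above and the monotonicity built into condition $(2)_n$ of Definition \ref{def:dimension}, I get $\dim\partial X\leq\dim\mathcal D(f)<\dim C$. To finish, observe that $C=X\cup\partial X$, so condition $(2)_n$ yields $\dim C=\max\{\dim X,\dim\partial X\}$. If $\dim C$ equalled $\dim\partial X$, the strict inequality $\dim\partial X<\dim C$ would be absurd; therefore $\dim C=\dim X$, and consequently $\dim\partial X<\dim X$, which is precisely the frontier property.

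The only step needing genuine care is establishing $\partial X\subseteq\mathcal D(f)$, and within it the separation of the two chosen values $a,b$ by open sets — this is exactly where the Hausdorff (or at least $T_1$) nature of the topology on $\mathcal M$ is used, together with the definability of $\mycl(X)$ coming from the first-order topological structure hypothesis. Everything after that is a purely formal manipulation of the monotonicity and additivity axioms $(2)_n$, so I expect no further difficulty.
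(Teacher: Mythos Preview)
Your argument is correct and follows essentially the same route as the paper: define $f:\mycl(X)\to M$ to be one constant on $X$ and another on $\partial X$, observe that $\partial X\subseteq\mathcal D(f)$, apply the continuity property, and use $(2)_n$ on the decomposition $\mycl(X)=X\cup\partial X$ to pass from $\dim\partial X<\dim\mycl(X)$ to $\dim\partial X<\dim X$. The paper's version is terser (it asserts $\mathcal D(f)=\partial X$ and jumps straight to the conclusion), whereas you spell out the separation of $a$ and $b$, the definability of $\mycl(X)$, and the final $(2)_n$ step; one small caveat is that Hausdorffness is not part of Pillay's definition of a first-order topological structure, so your parenthetical justification is not quite right, but only $T_1$ is actually needed and the paper tacitly makes the same assumption.
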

\begin{proof}
	Let $X$ be an arbitrary definable set.
	Pick two distinct elements $a$ and $b$ from $M$.
	Consider a definable function $f:\mycl(X) \to M$ defined by $f(x)=a$ if $x \in X$ and $f(x)=b$ in $x \in \partial X$.
	The set $\mathcal D(f)$ of points at which $f$ is discontinuous coincides with $\partial X$.
	This implies $\dim \partial X < \dim X$.
\end{proof}

\begin{proposition}\label{prop:local_omin}
	Let $\mathcal M$ be a definably complete locally o-minimal structure.
	Let $\dim:\myDef(\mathcal M) \to \mathbb N \cup \{-\infty\}$ be a dimension function possessing the frontier property,
	Then, $\dim$ coincides with the topological dimension function.
\end{proposition}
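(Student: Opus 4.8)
The plan is to invoke Proposition \ref{prop:dim}(1): since the topological dimension $\topdim$ is itself a dimension function on definably complete locally o-minimal structures (as recorded after Example \ref{ex:top_dim}, via \cite{Fuji3,FKK}), and $\dim$ is a dimension function by hypothesis, it suffices to prove that $\dim$ and $\topdim$ agree on $\myDef_1(\mathcal M)$. For a definable $X \subseteq M$ the value $\topdim X$ is $-\infty$, $0$, or $1$ according to whether $X$ is empty, nonempty with empty interior, or has nonempty interior, so I would treat these three cases separately. The empty case is immediate, and in each remaining case I have $0 \le \dim X \le 1$ from Definition \ref{def:dimension}(1) and $(2)_1$.

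For the empty-interior case I would first show, using local o-minimality, that such an $X$ is closed and discrete. Indeed, if some $p \in M$ were an accumulation point of $X$, then every neighbourhood of $p$ would meet $X$ in infinitely many points (the order topology is Hausdorff), whereas local o-minimality provides an open interval $I \ni p$ with $X \cap I$ a union of a finite set and finitely many open intervals; since $X$ has empty interior these intervals are empty and $X \cap I$ is finite, a contradiction. Hence $X$ has no accumulation points, so it is closed and discrete, and Lemma \ref{lem:discrete_closed} gives $\dim X = 0 = \topdim X$. Note the frontier property is not needed here.

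For the nonempty-interior case I would reduce to a single bounded open interval: the interior of $X$ contains some $(c,d)$ with $c,d \in M$, and $(c,d) \subseteq X$ gives $\dim X \ge \dim (c,d)$ by the monotonicity clause of $(2)_1$. Now the frontier of $(c,d)$ in the order topology is exactly $\{c,d\}$, so $\dim \partial (c,d) = 0$, and the frontier property yields $\dim \partial (c,d) < \dim (c,d)$, whence $\dim (c,d) \ge 1$ and therefore $\dim (c,d) = 1$. Thus $\dim X = 1 = \topdim X$. This is the one place where the frontier property is essential: it supplies, with no group structure available, the lower bound on the dimension of an interval that Lemma \ref{lem:interval1} obtained from translations.

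The only step carrying real content is the claim that a definable subset of $M$ with empty interior is closed and discrete; everything else is bookkeeping with Proposition \ref{prop:dim}(1) and Definition \ref{def:dimension}. I expect the main care to lie in that topological argument—stating cleanly the incompatibility between an accumulation point and local o-minimality—and in confirming that $\partial(c,d) = \{c,d\}$ when $c,d \in M$, after which the three cases close immediately and the coincidence $\dim = \topdim$ extends from $\myDef_1(\mathcal M)$ to all of $\myDef(\mathcal M)$ by Proposition \ref{prop:dim}(1).
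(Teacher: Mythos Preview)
Your proposal is correct and follows essentially the same route as the paper: reduce to $\myDef_1(\mathcal M)$ via Proposition~\ref{prop:dim}(1), use the frontier property on the boundary of an interval to force $\dim I=1$, and invoke Lemma~\ref{lem:discrete_closed} for the empty-interior case after observing such sets are closed and discrete. The only cosmetic difference is that the paper treats an arbitrary open interval (splitting on $I=M$ versus $I\neq M$) and simply cites the closed-and-discrete fact, whereas you restrict to a bounded $(c,d)$ and spell out the local-o-minimality argument yourself.
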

\begin{proof}
	Let $M$ be the universe of $\mathcal M$.
	Observe that $\dim X \leq 1$ for every definable subset $X$ of $M$ by Definition \ref{def:dimension}(1,2) because of the inclusion $X \subseteq M$.
	Let $I$ be an arbitrary open interval.
	If $I=M$, then $\dim I=1$ by Definition \ref{def:dimension}(1).
	If $I \neq M$, then the frontier $\partial I$ of $I$ is not empty.
	We have $1 \geq \dim I> \dim \partial I \geq 0$ by Definition \ref{def:dimension}(1) and the frontier property.
	We have obtained $\dim I=1$.
	
	Let $X$ be a definable subset of $M$.
	If $X$ has a nonempty interior, $X$ contains a nonempty open interval $I$. 
	We have $1 \geq \dim X \geq \dim I=1$ by  Definition \ref{def:dimension}(2).
	Suppose $X$ has an empty interior.
	Recall that $X$ is discrete and closed.
	We obtain $\dim X=0$ by Lemma \ref{lem:discrete_closed}.
	
	We have shown that $\dim$ coincides with the topological dimension function on $\myDef_1(\mathcal M)$.
	By Proposition \ref{prop:dim}(1), these two completely coincide with each other.
\end{proof}

\end{document}